\documentclass[11pt,twoside]{article}

\usepackage[utf8]{inputenc}
\usepackage[T1]{fontenc}

\usepackage{amsmath}
\usepackage{eqnarray}
\usepackage{amsfonts}
\usepackage{amssymb}
\usepackage{amsthm}
\usepackage{bbm}
\usepackage{mathtools}
\usepackage{upgreek}
\usepackage{mathrsfs}
\usepackage{nicefrac}

\usepackage{microtype}
\usepackage{xcolor}
\usepackage{textcomp}

\usepackage{epsf}
\usepackage{epsfig}
\usepackage{graphicx}
\usepackage{psfrag}
\usepackage[skip=0pt]{caption}
\usepackage{subcaption}
\usepackage{wrapfig}

\usepackage{fancyhdr}
\usepackage{fullpage}
\usepackage{pdfpages}
\usepackage{multicol}
\usepackage{changepage}
\usepackage{ragged2e}

\usepackage{booktabs}
\usepackage{multirow}
\usepackage{makecell}
\usepackage{colortbl}
\usepackage{siunitx}

\usepackage{algorithm}
\usepackage{algorithmic}

\usepackage{url}

\definecolor{customyellow}{HTML}{fedf8a}

\newtheorem{assumption}{Assumption}
\newtheorem{lemma}{Lemma}
\newtheorem{example}{Example}
\newtheorem{theorem}{Theorem}
\newtheorem{proposition}{Proposition}
\newtheorem{definition}{Definition}
\newtheorem{observation}{Observation}
\newtheorem{corollary}{Corollary}

\def\ouppi{{\overline{\uppi}}}
\def\oupgamma{{\overline{\upgamma}}}

\begin{document}

\begin{center}

{\bf{\Large{Characterizing Heterogeneous Rates in Finite Mixture\vspace{0.2em} Estimation via Partial Optimal Transport}}}
  
\vspace*{.2in}
{\large{
\begin{tabular}{cccccc}
Dung Le$^{\star}$ & Huy Nguyen$^{\star}$ & Trang Pham & Alessandro Rinaldo & Nhat Ho 
\end{tabular}
}}

\vspace*{.2in}

\begin{tabular}{cc}
Department of Statistics and Data Science\\
The University of Texas at Austin
\end{tabular}

\vspace*{.2in}
\today


\begin{abstract}
    Parameter estimation in finite mixture models can exhibit highly heterogeneous convergence behavior: locally isolated components may be estimated substantially faster than groups of competing components. Existing analyses based on Wasserstein distances typically characterize only the worst-case rate and therefore do not fully capture this local heterogeneity. In this paper, we introduce a Voronoi-based partial optimal transport (VPOT) framework for obtaining refined local and global convergence guarantees for the maximum likelihood estimator of the mixing measure. The key geometric idea is to localize the comparison of two mixing measures to extended Voronoi neighborhoods and use partial optimal transport to accommodate the unequal masses of their local restrictions. Within each neighborhood, the first-order POT discrepancy is raised to a power determined by the number of locally competing atoms, allowing the resulting loss to adapt to the local degree of singularity. Under suitable regularity and strong identifiability conditions, we establish uniform local and global upper bounds for a maximum likelihood estimator under the VPOT loss. These bounds reveal a configuration-dependent form of parameter estimation: less singular local configurations admit faster convergence, whereas the most singular configuration recovers the classical worst-case behavior characterized by Wasserstein-based analyses. We further establish a minimax lower bound showing that the convergence rate for estimating the mixing measure under the VPOT loss is optimal. Our results hold in arbitrary fixed dimension without requiring mixing proportions to be uniformly bounded away from zero or prior knowledge of the true number of mixture components. Overall, VPOT provides a configuration-adaptive framework for capturing heterogeneous parameter-estimation behavior in finite mixture models. 
\end{abstract}
\end{center}
\let\thefootnote\relax\footnotetext{$^\star$Equal contribution.}

\section{Introduction}
\label{sec:introduction}
Finite mixture models \cite{Mclachlan-1988,Lindsay-1995, mclachlan2019finite} constitute a fundamental class of probabilistic models for representing heterogeneous data arising from multiple latent subpopulations. In their most basic form, a finite mixture model assumes that the observed data are generated from a convex combination of component distributions, each corresponding to a distinct latent group. Mixture models are widely used across statistics and machine learning due to their flexibility in capturing multimodality, skewness, and other complex distributional features that cannot be adequately described by a single parametric family. They arise naturally in numerous applications, including clustering \cite{banfield1993model, celeux1995gaussian, fraley2002model, malsiner2016model, scrucca2016mclust}, density estimation \cite{roeder1997practical, li1999mixture, scrucca2019transformation}, machine learning \cite{ hastie1996discriminant, hinton1997modeling, viroli2019deep, figueiredo2002unsupervised, jiang2016variational,han2024fusemoe}, economics \cite{kasahara2009nonparametric, bonhomme2016non, compiani2016using, higgins2023identification}, biology and genomics \cite{yeung2001model, mclachlan2002mixture, pan2002model, silva2023finite}, among others \cite{reynolds1995robust, reynolds2000speaker, kim2023finite,le2024mixture}. 
In these applications, the principal objective is to conduct statistical inference for the mixture parameters, which gives rise to the classical problem of characterizing the optimal convergence rates for parameter estimation in finite mixture models. In particular, let $\mathcal{F}=\{f(x\mid\gamma):x\in\mathcal{X}, \ \gamma\in\Gamma\}$ be a known parametric family of probability density functions with respect to a dominating $\sigma$-finite measure, where the parameter space $\Gamma\subseteq\mathbb{R}^d$, for some $d\geq 1$, is a compact set with non-empty interior, and $\mathcal{X}\subseteq\mathbb{R}^{\bar{d}}$, for some $\bar{d}\geq 1$. Next, let $X_1,X_2,\ldots,X_n$ be an i.i.d. sample drawn from a finite mixture model with $k_*\geq 1$ components, whose probability density function is given by 
\begin{align}
    p_{G_*}(x) =\int f(x\mid\gamma)dG_*(\gamma)= \sum_{\ell=1}^{k_*}\pi^*_{\ell}\, f(x\mid\gamma^*_{\ell}), \quad x\in\mathcal{X},
\end{align}
where $G_* = \sum_{\ell=1}^{k_*} \pi^*_{\ell}\delta_{\gamma^*_{\ell}}$ is a probability mixing measure with $k_*$ atoms $\gamma^*_{\ell}\in\Gamma$. Meanwhile, the mixing proportions $(\pi^*_{\ell})_{\ell}$ are non-negative and sum up to one, that is,  $\sum_{i=1}^{k_*}\pi^*_{\ell}=1$. Then, the goal here is to characterize the convergence rates of estimating mixture parameters $(\gamma^*_{\ell})_{\ell}$.\\



\noindent
\textbf{Related work.} There is a long line of work on the convergence behavior of parameter estimation in finite mixture models. First, Chen et al. \cite{Chen1995} introduced a strong identifiability condition on univariate mixtures under which they established a local minimax rate of order $n^{-1/4}$ for estimating the mixture parameters, where $n$ denotes the sample size. Next, Nguyen \cite{nguyen2016latentmixing} proposed the Wasserstein distance as a natural metric to capture the convergence rates of individual parameters through the associated mixing measure. 
More specifically, for any two equal-mass mixing measures $G=\sum_{i=1}^{k}\pi_i\delta_{\gamma_i}$ and $G'=\sum_{j=1}^{k'}\pi'_j\delta_{\gamma'_j}$, the $r$-Wasserstein distance with the Euclidean norm between $G$ and $G'$ is defined as
\begin{align*}
    W_r(G,G'):=\left(\inf~\sum_{i,j}q_{ij}\|\gamma_i-\gamma'_j\|_2^r\right)^{1/r},
\end{align*}
where the infimum is taken over all couplings $(q_{ij})_{ij}\in[0,1]^{k\times k'}$ such that $\sum_{i=1}^{k}q_{ij}=\pi'_j$, for any $1\leq j\leq k'$, and $\sum_{j=1}^{k'}q_{ij}=\pi_i$, for any $1\leq i\leq k$.
The parameter estimation rates in \cite{nguyen2016latentmixing} were achieved by relating the Wasserstein distances on the
space of mixing measures to the Hellinger distances on the space of mixture distributions.
This framework was later adopted by Ho and Nguyen \cite{Ho-Nguyen-EJS-16, Ho-Nguyen-Ann-16} to establish the convergence rates of the maximum likelihood estimators of the mixture parameters along with corresponding minimax lower bounds under the settings of strong identifiability and weak identifiability of finite mixture models, respectively. It should be noted that the rates derived in these two works were pointwise.
Subsequently, Heinrich and Kahn \cite{heinrich2018} aimed to determine uniform rates of estimating parameters in finite mixtures. They demonstrated that, under some regularity and strong identifiability conditions, around a given mixing distribution $G_0$ with $k_0$ components, the optimal local minimax rate for parameter estimation decreased exponentially when the degree of over-specification, or the number of excess mixture components, $d_0: = k-k_0$ increased, through the Wasserstein bound
\begin{align}
    \label{eq:old_rate}
    \sup_{\substack{G\in\mathcal{G}_{\leq k}(\Gamma),\\ W_{2d_0+1}(G,G_0)<\varepsilon}}\mathbb{E}_{G}[W_{2d_0+1}(\widetilde{G}_n,G)]\lesssim n^{-1/(4d_0+2)},
\end{align}
for some $\epsilon > 0$.
Above, $\mathcal{G}_{\leq k}(\Gamma)$ denotes the set of probability mixing measures on $\Gamma$ with at most $k$ atoms. Additionally, $\widetilde{G}_n$ stands for the minimum distance estimator defined as $\|F(\cdot,\widetilde{G}_n)-F_n\|_{\infty}=\inf_{G\in\mathcal{G}_{\leq k}(\Gamma)}\|F(\cdot,G)-F_n\|_{\infty}$, where $F$ and $F_n$ are the population and empirical distributions, respectively. The inequality~\eqref{eq:old_rate} indicates that the optimal rates for estimating ground-truth parameters $(\gamma^*_{\ell})_{\ell}$ admit the same order of $n^{-1/(4d_0+2)}$. 
This is a limitation of the Wasserstein distances since they can only characterize the worst-case parameter estimation rates, whereas the rates for estimating most individual parameters, particularly those fitted by a single component, should be substantially faster. \\

\noindent
To overcome this issue, Manole and Ho \cite{manole22refined} advocated using a class of loss functions built upon a set of Voronoi cells generated by the support points of the given mixing measure $G_0=\sum_{i=1}^{k_0}\pi_{0i}\delta_{\gamma_{0i}}$, that is, $\mathcal{A}_j(G):=\{1\leq i\leq k:\|\gamma_i-\gamma_{0j}\|_2\leq\|\gamma_i-\gamma_{0\ell}\|_2,\forall \ell\neq j\}$, for all $1\leq j\leq k_0$, for a mixing measure $G=\sum_{i=1}^{k}\pi_{i}\delta_{\gamma_i}$. 
In particular, for a true mixing measure $G_{*}$ with exactly $k_*$ atoms in a small Wasserstein-neighborhood of $G_0$, they captured the heterogeneity of local convergence rates of estimating true parameters, that is, atoms of $G_*$, using the maximum likelihood method. The result of Manole and Ho \cite{manole22refined} can be interpreted heuristically in terms of the local configurations of the fitted and true components. 
Ignoring polylogarithmic factors, smaller cardinalities of $\mathcal{A}_j(\widehat{G}_n)$ and $\mathcal{A}_j(G_*)$ correspond to faster parameter estimation rates, whereas larger cardinalities indicate slower estimation. For instance, suppose that for some $1\leq j\leq k_0$, both cardinalities attain their maximal values,
\begin{align*}
    |\mathcal{A}_j(\widehat{G}_n)|=k-k_0+1, \qquad |\mathcal{A}_j(G_*)|=k_*-k_0+1.
\end{align*}
This corresponds to all redundant atoms of $\widehat{G}_n$ and $G_*$ concentrating around the $j$-th component of $G_0$. In this most crowded local configuration, the convergence rates of fitted parameters near that component are of order $n^{-1/(k+k_*-2k_0+1)}$. At the same time, the parameters associated with the remaining $k_0-1$ components exhibit the parametric rate $n^{-1/2}$. Moreover, this is the only configuration in which the worst-case rate $n^{-1/(k+k_*-2k_0+1)}$ can arise: whenever at least one of the above cardinalities is smaller than its maximum, the corresponding interpretation yields strictly faster rates for all fitted atoms. Note that because $\mathcal{A}_j(\widehat{G}_n)$ is random, these component-wise rates should be understood as an interpretation of their expected loss bound rather than as deterministic convergence rates.
Their results, however, were obtained under three main restrictions: (i) the parameter space was assumed to be one-dimensional to facilitate their derivations, (ii) both the ground-truth mixing measure $G_{*}$ and the reference mixing measure $G_0$ had mixing proportions uniformly bounded away from zero, and (iii) the number of ground-truth atoms $k_*$ was assumed to be known.\\

\noindent
The main goal of this paper is to characterize the heterogeneity of parameter estimation rates in finite mixtures in more general and practical settings. In particular, we consider the parameter space of an arbitrary yet fixed dimension, and do not require mixing proportions to be bounded away from zero nor assume prior knowledge of $k_*$. Additionally, we analyze the widely used maximum likelihood method for parameter estimation. Since the true mixture order $k_*$ is typically unknown in practice, we study the maximum likelihood estimator (MLE) of $G_*$ with order at most $k\geq k_*$, which is given by 
\begin{align}
    \label{eq:MLE}
    \widehat{G}_n\equiv \widehat{G}_n(k)
    :=
    \sum_{j=1}^{\widehat{k}_n}
    \widehat{\pi}_{n,j}\delta_{\widehat{\gamma}_{n,j}}
    \in
    \operatorname*{arg\,max}_{G'\in\mathcal{G}_{\leq k}(\Gamma)}
    \frac{1}{n}\sum_{i=1}^n\log f_{G'}(X_i).
\end{align}
\textbf{Why partial optimal transport?} Towards these goals, we develop a novel approach of Voronoi-based partial optimal transport (VPOT), which allows for a more refined convergence analysis in which two mixing measures are compared locally over extended neighborhoods of the Voronoi cells generated by the reference mixing measure $G_0$. A direct application of the Wasserstein distance, however, becomes problematic after such a localization. Indeed, although two mixing measures $\widehat{G}_n$ and $G_{*}$ have the same total mass globally, their restrictions to a given Voronoi cell generally do not: the two measures may allocate different amounts of probability mass to the same cell. In particular, the standard Wasserstein distance requires its arguments to have equal total masses, and therefore cannot be applied. Instead, we deploy  partial optimal transport (POT) \cite{Figalli2010pot}, which transports only the common mass between the two restricted measures while explicitly penalizing their unmatched mass. This construction allows us to retain the local geometric information captured by Voronoi localization while accommodating discrepancies in mixing weights across cells. More importantly, it remains well-defined even in regimes in which the mixing proportions may vanish or merge -- thus changing the effective order of the mixing measure -- and therefore provides a natural framework for studying uniform parameter estimation rates in finite mixture models.\\  

\noindent
\textbf{Contributions.} Our contributions are twofold and can be summarized as follows. 

\emph{1. A novel Voronoi-based POT (VPOT) definition between mixing measures.} First, we introduce VPOT as a principled framework for capturing the optimal minimax convergence rate of parameter estimation in finite mixture models. Specifically, we partition the parameter space into the Voronoi cells generated by the components of a limiting reference mixing measure and compare the restrictions of two mixing measures within each cell using POT. This construction is particularly well suited to finite mixtures because these restricted measures generally have unequal total masses, in which case the standard Wasserstein distance is not directly applicable. By transporting the common mass between the two restricted mixing measures and explicitly penalizing the difference in their total masses, POT captures both discrepancies in component locations and discrepancies in the aggregate mixing weights within each local neighborhood. Moreover, by choosing the transport order according to the number of components involved in each Voronoi cell, the resulting discrepancy adapts to the local degree of singularity of finite mixtures and, therefore, captures the heterogeneous convergence behavior of different groups of mixture components.

\emph{2. Characterizing heterogeneous parameter estimation rates.}
Our second contribution is to use this VPOT framework to capture the heterogeneity of parameter estimation rates in finite mixtures. We establish upper and lower bounds showing that the estimation rates of individual parameters are governed by the number of locally competing mixture atoms, rather than by the degree of over-specification $d_0$ captured by the Wasserstein distance in equation~\eqref{eq:old_rate}. More specifically, within a neighborhood of a reference mixing measure $G_0$, our results reveal that the estimation rates for true parameters $\gamma^*_j$ around a reference atom $\gamma_{0i}$ depend on the numbers of fitted and true atoms in its Voronoi cell: the smaller numbers yield faster estimation rates, while the larger ones lead to slower rates. 

\begin{itemize}
    \item In the most favorable scenario in which a Voronoi cell generated by an atom of $G_0$ consistently contains one fitted atom and one true atom, the local convergence rate of this fitted parameter admits a parametric order $n^{-1/2}$. 
    \item In the worst case of a singular configuration, where all excess components of $\widehat{G}_n$ and $G_*$ consistently concentrate around the same component of $G_0$, the local convergence rates of these fitted parameters match the classical worst-case rate of order $n^{-1/(4d_0+2)}$ in equation~\eqref{eq:old_rate}. Thus, the classical Wasserstein rate~\eqref{eq:old_rate} arises as an extreme case of a broader spectrum of configuration-dependent rates, while less singular configurations enjoy strictly faster parameter recovery. 
\end{itemize}
Consequently, different groups of mixture components will exhibit different convergence rates, ranging from $n^{-1/2}$ to $n^{-1/(4d_0+2)}$, depending on their local configurations. Importantly, our results hold for parameter spaces of arbitrary finite dimension, thereby extending the refined minimax theory beyond the one-dimensional setting considered in previous works.\\

\noindent
\textbf{Paper organization.} 
The remainder of the paper is organized as follows. Section~\ref{sec:preliminaries} introduces the notation and preliminary results used throughout the paper, including the proposed VPOT loss and the convergence rate of maximum likelihood density estimator. Section~\ref{sec:uniform_bound} presents our main theoretical results on finite mixture estimation: we first establish local and global uniform upper bounds under the VPOT loss and then derive the corresponding minimax lower bound, thereby obtaining the optimal rates of parameter estimation. Next, we streamline the proof of the upper-bound results in Section~\ref{sec:proof_upper_bound} before concluding the paper in Section~\ref{sec:conclusion}. Lastly, additional results and other proofs are deferred to the appendices.



\section{Preliminaries}


\label{sec:preliminaries}
In this section, we first present necessary notation for our analysis and the standard POT framework. We then introduce extended Voronoi cells used to construct the novel VPOT loss for capturing the convergence behavior of parameter estimation. Finally, we state an uniform bound for the $L_1$ density-estimation bound, which is later combined with the VPOT bounds in Section~\ref{sec:uniform_bound} to obtain rates for estimating parameters in finite mixtures.\\

\noindent
\textbf{Notation.} 
For two natural numbers $m,n\in\mathbb{N}$ such that $m< n$, we denote $[n]:=\{1,2,\ldots,n\}$ and $[m,n] := \{m,m+1,\ldots,n\}$. Next, let $|A|$ denote the cardinality of a finite set $A$. For $a,b\in\mathbb{R}$, we define $a\vee b:=\max\{a,b\}$ and $a\wedge b:=\min\{a,b\}$. For any two vectors $a,b \in \mathbb{R}^d$, we denote $(a,b) = (a_1,b_1)\times \ldots\times (a_d,b_d)$ and $[a,b]= [a_1,b_1]\times \ldots\times [a_d,b_d]$. The Euclidean norm on $\mathbb{R}^d$ is denoted by $\|\cdot\|_2$. For nonnegative quantities $a$ and $b$, we write $a\lesssim b$ if $a\leq Cb$, for some constant $C>0$ independent of these quantities. We write $a\gtrsim b$ when $b\lesssim a$ and $a\asymp b$ when both relations hold.
Given a multi-index $\boldsymbol{\beta}=(\beta_1,\ldots,\beta_d)\in\mathbb N^d$, set $|\boldsymbol{\beta}|:=\sum_{j=1}^d \beta_j$.
The corresponding derivative of order $|\boldsymbol{\beta}|$ of a function $f$ is denoted by $ D^{\boldsymbol{\beta}} f(x):= \frac{\partial^{|\boldsymbol{\beta}|}f} {\partial x_1^{\beta_1}\cdots\partial x_{\bar{d}}^{\beta_{\bar{d}}}}$. If a function $f$ is also parameterized by $\gamma$, we also denote $ D_{\gamma}^{\boldsymbol{\beta}} f(x\mid \gamma):= \frac{\partial^{|\boldsymbol{\beta}|}f(x\mid \gamma)} {\partial \gamma_1^{\beta_1}\cdots\partial \gamma_{{d}}^{\beta_{{d}}}}$. For a Lebesgue-integrable measurable function $g:\mathbb{R}^{\bar{d}}\to\mathbb{R}$, let $\|g\|_1:=\int_{\mathbb{R}^{\bar{d}}}|g(x)|\,dx$. Additionally, if $g$ is bounded, we denote $\|g\|_\infty:=\sup_{x\in\mathbb{R}^{\bar{d}}}|g(x)|$. For any two probability density functions $f_1$ and $f_2$ with respect to the Lebesgue measure, we define the Hellinger distance between them as $h(f_1,f_2):=\left(\frac{1}{2}\int_{\mathbb{R}^{\bar{d}}}\left(\sqrt{f_1(x)}-\sqrt{f_2(x)}\right)^2dx\right)^{1/2}$.\\

\noindent
\emph{Mixing measures.} A finite mixing measure is written as $G=\sum_{i=1}^{\ell}\pi_i\delta_{\gamma_i}$, where $\delta$ stands for the Dirac measure, with total mass $m(G):=\sum_{i=1}^{\ell}\pi_i$ and support $\mathcal{S}(G):=\{\gamma_1,\gamma_2,\ldots,\gamma_\ell\}$. We denote $\mathcal{G}_k(\Gamma)$ and $\mathcal{G}_{\leq k}(\Gamma)$ as the classes of probability mixing measures on $\Gamma$ satisfying
$|\mathcal{S}(G)|=k$ and $1\leq |\mathcal{S}(G)|\leq k$, respectively. The mixture density function and the cumulative distribution function induced by a mixing measure $G$ are given by
\[
p_G(x):=\sum_{i=1}^{\ell}\pi_i f(x\mid\gamma_i),
\qquad
F(x \mid G):=\sum_{i=1}^{\ell}\pi_i F(x\mid\gamma_i),
\]
where $F(x\mid\gamma):=\int_{(-\infty,x]}f(t\mid\gamma)\,dt$. We also use the notations $F_G(x)$ and $F(x\mid G)$ interchangeably. The restriction of a mixing measure $G$ to a Borel set $A\subseteq\Gamma$ is defined by $G|_A$, where $G|_A(B):=G(A\cap B)$, for every Borel set $B\subseteq\Gamma$, or equivalently, $G|_A=\sum_{i=1}^{\ell}\pi_i\delta_{\gamma_i}
\mathbf{1}_{\{\gamma_i\in A\}}$. For finite nonnegative measures $\mu$ and $\nu$, we write $\mu\preceq\nu$ if $\mu(B)\leq\nu(B)$, for every Borel set $B\subseteq\Gamma$.


\subsection{(Voronoi-based) Partial Optimal Transport}
\textbf{POT discrepancy.}
For any two mixing measures $G_1 = \sum_{\ell=1}^{k_1}\pi_{1,\ell}\delta_{\gamma_{1,\ell}}$ and $G_2 = \sum_{\ell=1}^{k_2}\pi_{2,\ell}\delta_{\gamma_{2,\ell}}$ that may have different total masses, we quantify the discrepancy between them using the $r$-POT discrepancy \cite{Figalli2010pot, caffarelli2010free}, where $r\in\mathbb{N}$, defined as 
\begin{equation}
\label{eqn:POT_definition}
    \mathsf{POT}_r(G_1,G_2) = \left(\inf_{\substack{\mu_1\preceq G_1,\ \mu_2\preceq G_2\\ m(\mu_1) = m(\mu_2) = \min\{m(G_1), m(G_2)\}}} W_{r}^{r}(\mu_1,\mu_2) + |m(G_1) - m(G_2)|\right)^{1/r}. 
\end{equation}
The optimization in POT selects submeasures of $G_1$ and $G_2$ having the largest possible common mass, namely $\min\{m(G_1),m(G_2)\}$. Hence, all the mass of the smaller measure is transported to a submeasure of the larger one. The term $W_r^r(\mu_1,\mu_2)$ quantifies the transportation cost of this matched mass, whereas $|m(G_1)-m(G_2)|$ accounts for the mass that cannot be matched. In particular, when $m(G_1)=m(G_2)$, no mass is discarded and $\mathsf{POT}_r(G_1,G_2)$ reduces to the usual $r$-Wasserstein distance. In addition, POT can also be represented through a Wasserstein distance between suitably augmented measures \cite{chapel2020pot,le2022mpot}.\\

\noindent
It should be noted that POT is particularly convenient for comparing local restrictions of probability mixing measures. Although two probability measures $G$ and $G'$ have the same total mass globally, their restrictions $G|_A$ and $G'|_A$ to a subset $A\subseteq\Gamma$ generally have different masses. Applying the ordinary Wasserstein distance would therefore require an additional normalization, which would discard the discrepancy in the amount of mass assigned to $A$. In contrast, POT compares the locations of the common mass while simultaneously retaining the difference in local masses. Thus, it captures both discrepancies in component locations and discrepancies in the aggregate mixing weights within each local region. The following proposition formalizes how such local POT comparisons over a cover of $\Gamma$ characterize the underlying mixing measures.



\begin{proposition}
    \label{proposition:local_equal_implies_global_equal}
    Let $G, G' \in \mathcal{G}_{\leq k}(\Gamma)$ be two probability mixing measures and $\{A_i\}^m_{i=1}$ be a Borel cover of the support space $\Gamma$. If for each $i\in[m]$, there exists $q_i\in\mathbb{N}$ such that $\mathsf{POT}_{q_i}(G|_{A_i}, G'|_{A_i}) = 0$, then we have $G=G'$. 
\end{proposition}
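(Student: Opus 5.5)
The plan is to show that a vanishing POT discrepancy on a single Borel set $A$ forces $G|_A=G'|_A$. Once this is established for every $A_i$, the cover property finishes the argument. Indeed, for any Borel $B\subseteq\Gamma$ and any atom $\gamma$ of $G$ or $G'$, pick some $i$ with $\gamma\in A_i$. Since $G|_{A_i}=G'|_{A_i}$, we get $G(\{\gamma\})=G|_{A_i}(\{\gamma\})=G'|_{A_i}(\{\gamma\})=G'(\{\gamma\})$. Both measures are finitely supported, so they agree on all atoms and hence $G=G'$.

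The local step proceeds as follows. Fix $i$ and write $\mu=G|_{A_i}$ and $\nu=G'|_{A_i}$, with $q=q_i$. The definition of $\mathsf{POT}_q$ in \eqref{eqn:POT_definition} is a sum of two nonnegative terms. So $\mathsf{POT}_q(\mu,\nu)=0$ immediately gives $m(\mu)=m(\nu)=:M$, and the infimum of $W_q^q(\mu_1,\mu_2)$ over admissible pairs is zero. Because $m(\mu_1)=m(\mu_2)=M$, $\mu_1\preceq\mu$ and $\mu_2\preceq\nu$, we must have $\mu_1=\mu$ and $\mu_2=\nu$. The difference $\mu-\mu_1$ is a nonnegative measure of zero mass, hence zero. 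The constraint set therefore collapses to the single pair $(\mu,\nu)$, and the condition reads $W_q(\mu,\nu)=0$. If $M=0$ both restrictions vanish and there is nothing to prove.

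The remaining point, and the only one needing a little care, is that the Wasserstein infimum is attained and that $W_q=0$ implies equality for finite measures of equal mass $M>0$. Couplings between finitely supported measures form a compact polytope in $[0,M]^{k\times k'}$, and the cost $\sum q_{ij}\|\gamma_i-\gamma'_j\|^q$ is linear. So a minimizing coupling exists with zero cost. Every pair with $q_{ij}>0$ then satisfies $\gamma_i=\gamma'_j$. Taking marginals shows that each atom of $\mu$ carries the same mass in $\nu$ and vice versa. Equivalently, one can normalize by $M$ and invoke the metric property of $W_q$ on probability measures. This yields $\mu=\nu$, completing the local step. I do not expect a real obstacle; the main subtlety is the collapse of the submeasure constraint when the masses agree.
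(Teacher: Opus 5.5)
Your proof is correct and follows the same route as the paper. In both, a vanishing $\mathsf{POT}_{q_i}$ on each $A_i$ forces equal local masses and zero transport cost, hence $G|_{A_i}=G'|_{A_i}$, and the cover then yields $G=G'$; your version is more careful than the paper's brief justification, since you explicitly handle the collapse of the submeasure constraint, the attainment of an optimal coupling, and the atom-by-atom passage from local to global equality.
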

\noindent
Proposition~\ref{proposition:local_equal_implies_global_equal} shows that equality of two mixing measures can be characterized through local POT comparisons. Indeed, for each $i\in[m]$, the condition $\mathsf{POT}_{q_i}(G|_{A_i}, G'|_{A_i}) = 0$
implies both equality of the total masses of the two restrictions and zero transportation cost between them; hence, $G|_{A_i}= G'|_{A_i}$.
Since $\{A_i\}^m_{i=1}$ covers the parameter space $\Gamma$, these local equalities determine the measures on the entire support space and therefore imply $G=G'$. Thus, rather than comparing $G$ and $G'$ globally, it suffices to compare their restrictions on a collection of local regions covering $\Gamma$. In the sequel, we construct such regions around the support points of the reference mixing measure $G_0$ using its associated Voronoi cells.\\

\noindent
\textbf{Voronoi cells.}
Let $G_0 = \sum_{i=1}^{k_0}\pi_{0i}\delta_{\gamma_{0i}}$ be a known mixing measure. Then, a Voronoi cell generated by an atom $\gamma_{0i}$ of $G_0$ is defined as 
\begin{align*}
    {\mathcal{V}}^{i}_{G_0} = \{\gamma\in\Gamma:\|\gamma - \gamma_{0i}\|_2 \leq \|\gamma-\gamma_{0j}\|_2,\ \forall j\neq i\}.
\end{align*}
Geometrically, $\mathcal{V}_{G_0}^i$
 consists of all parameter values in $\Gamma$ that are at least as close to $\gamma_{0i}$
 as to any other support point of $G_0$. Thus, the collection $\{\mathcal{V}_{G_0}^i\}_{i=1}^{k_0}$
 provides a natural localization of the parameter space around the atoms of $G_0$
 and covers $\Gamma$, up to overlaps along cell boundaries. However, direct restriction to a fixed cell is unstable near its boundary, as an arbitrarily small perturbation can move an atom across the boundary and change whether it is retained in the restricted measure. For example, an atom of $G$ and a nearby atom of $G'$ may lie on opposite sides of the boundary. In that case, the cell contains one atom but not the other, so the two restricted measures can have different masses even when the atoms are arbitrarily close. To avoid this issue, we introduce a finite family of nested extensions of each Voronoi cell below.\\

\noindent
\textbf{Extension of Voronoi cells. }
For $j\in[2k]$, we define an \textit{$(\delta,j)$-extension of a Voronoi cell $\mathcal{V}_{G_0}^i$}, for $i\in[k_0]$, as follows:
\begin{equation*}
    \mathcal{V}^{i,j}_{G_0,\delta} = \left\{\gamma \in \Gamma: \inf_{\vartheta\in \mathcal{V}^{i}_{G_0}}\|\vartheta-\gamma\|_2 < \dfrac{j\delta}{2k}\right\}. 
\end{equation*} 
These $2k$ nested extensions provide enough candidate boundaries to ensure that at least one of them contains no limiting cluster location arising in the sequence arguments below. Indeed, consider two sequences $(G_n)$ and $(G_n')$ in $\mathcal{G}_{\leq k}(\Gamma)$ that converge in $W_1$ to the same mixing measure. By compactness of $\Gamma$, after extracting a common subsequence and relabeling the atoms, we may suppose that all atom locations converge. For every $n$, the combined support $\mathcal{S}(G_n)\cup\mathcal{S}(G_n')$ contains at most $2k$ points. Since the common limiting measure has nonempty support, at least one limiting location is shared by the two sequences. Therefore, their atoms have at most $2k-1$ distinct limiting cluster locations. For each fixed $i\in[k_0]$, the boundaries $\{\partial\mathcal{V}_{G_0,\delta}^{i,j}:j\in[2k]\}$ are pairwise disjoint because they correspond to the distinct extension radii $j\delta/(2k)$. Thus, each limiting cluster location can lie on at most one boundary, and at least one of the $2k$ boundaries contains no such location. Let $j_i\in[2k]$ denote an index corresponding to such a boundary. Since no limiting cluster location lies on $\partial\mathcal{V}_{G_0,\delta}^{i,j_i}$, each location has a neighborhood contained either in $\mathcal{V}_{G_0,\delta}^{i,j_i}$ or in its complement. Hence, for all sufficiently large $n$, every corresponding cluster lies entirely inside or outside the selected extended cell. Additionally, as $\delta\to0$, each extended Voronoi cell $\mathcal{V}_{G_0,\delta}^{i,j}$ converges to the original cell $\mathcal{V}_{G_0}^{i}$, thus serving as a natural and well-justified representative of the original cell. The Voronoi cells and their extensions are illustrated in Figure \ref{fig:voronoi-extension}.
\begin{figure}[t]
\centering
\includegraphics[width=0.95\linewidth]{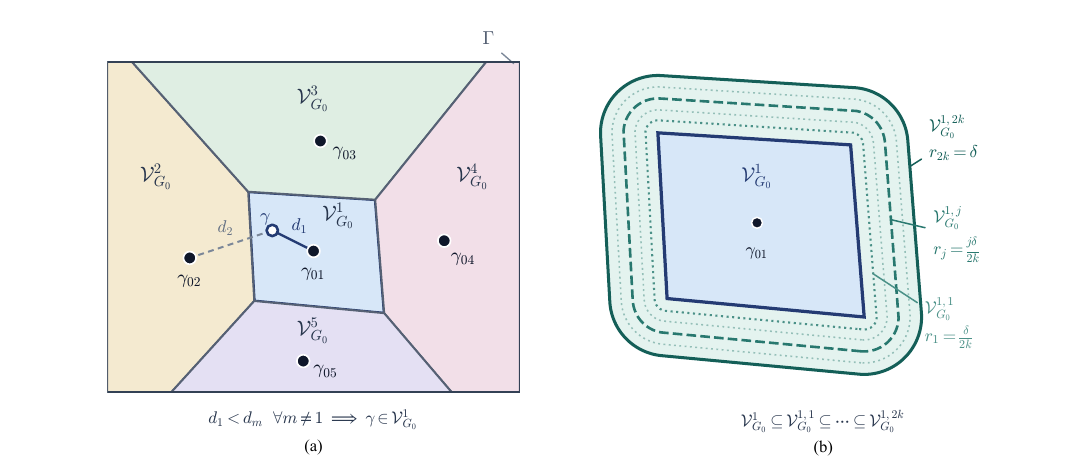}
\caption{Illustration of Voronoi cells and their $(\delta,j)$-extensions.
    (a) For the illustrated point $\gamma$, let
    $d_m=\|\gamma-\gamma_{0m}\|_2$. The shortest segment is the one to
    $\gamma_{01}$, so $d_1<d_m$ for every $m\ne1$ and
    $\gamma\in \mathcal{V}_{G_0}^{1}$. Assigning every point in $\Gamma$ to its nearest
    atom yields the Voronoi cells; a shared boundary occurs where the
    competing distances are equal.
    (b) For the highlighted cell $\mathcal{V}_{G_0}^{1}$, the extension
    $\mathcal{V}_{G_0}^{1,j}$ contains the points whose Euclidean distance to
    $\mathcal{V}_{G_0}^{1}$ is less than $r_j=\frac{j\delta}{2k}$, for
    $j=1,\ldots,2k$. Consequently, the extensions are nested and the
    largest one has radius $r_{2k}=\delta$. }
\label{fig:voronoi-extension}
\end{figure}

\noindent
\textbf{Voronoi-based POT (VPOT).} 
To quantify the discrepancy between two mixing measures \(G\) and \(G'\) using extended Voronoi cells, we introduce the Voronoi-based partial optimal transport (VPOT) discrepancy. Within each cell, we compare the restrictions of \(G\) and \(G'\) using POT, which accommodates restricted measures with different total masses. More precisely, the VPOT loss between \(G\) and \(G'\), relative to a reference mixing measure \(G_0\), is defined as
\begin{align}
\label{eqn:dung_defintion_of_lower_VPOT}
    \mathsf{VPOT}_{G_0,\delta}(G,G') = \sum_{i = 1}^{k_0}\inf_{1\leq j\leq 2k} \mathsf{POT}^{r_{i,j}}_{1}\left(G|_{\mathcal{V}^{i,j}_{G_0,\delta}},G'|_{\mathcal{V}^{i,j}_{G_0,\delta}}\right),
\end{align}
where $r_{i,j} = \max\{|\mathcal{S}(G)\cap \mathcal{V}^{i,j}_{G_0,\delta}|+ |\mathcal{S}(G')\cap \mathcal{V}^{i,j}_{G_0,\delta}|-1,1\}$ denotes the total number of atoms from $G$ and $G'$ that locally compete within the Voronoi cell $\mathcal{V}^{i,j}_{G_0,\delta}$, minus one, truncate below at one. When $|\mathcal{S}(G)\cap \mathcal{V}^{i,j}_{G_0,\delta}| = |\mathcal{S}(G')\cap \mathcal{V}^{i,j}_{G_0,\delta}| = 0$, the restrictions $G|_{\mathcal{V}^{i,j}_{G_0,\delta}}$ and $G'|_{\mathcal{V}^{i,j}_{G_0,\delta}}$ become zero measures. In this case, $\mathsf{POT}^{r_{i,j}}_{1}\left(G|_{\mathcal{V}^{i,j}_{G_0,\delta}},G'|_{\mathcal{V}^{i,j}_{G_0,\delta}}\right) = 0$. In addition, when  $|\mathcal{S}(G)\cap \mathcal{V}^{i,j}_{G_0,\delta}| =1 $ and $|\mathcal{S}(G')\cap \mathcal{V}^{i,j}_{G_0,\delta}| = 0$ or  $|\mathcal{S}(G)\cap \mathcal{V}^{i,j}_{G_0,\delta}| = 0$ and $|\mathcal{S}(G')\cap \mathcal{V}^{i,j}_{G_0,\delta}| = 1$, then $r_{i,j} = 1$, leading to $\mathsf{POT}_1^{r_{i,j}}(G|_{\mathcal{V}^{i,j}_{G_0,\delta}},G'|_{\mathcal{V}^{i,j}_{G_0,\delta}}) = |m(G|_{\mathcal{V}^{i,j}_{G_0,\delta}}) - m(G'|_{\mathcal{V}^{i,j}_{G_0,\delta}})|$, which measures the mass discrepancy between two restrictions of measures. \\

\noindent
We next establish several fundamental properties of the proposed Voronoi-based POT discrepancy. For a fixed reference measure $G_0$, $\mathsf{VPOT}_{G_0,\delta}(G_1,G_2)$ is nonnegative and symmetric in $G_1$ and $G_2$. In addition, it separates mixing measures in the sense that $\mathsf{VPOT}_{G_0,\delta}(G_1,G_2)=0$ if and only if $G_1= G_2$. 
Indeed, if $G_1=G_2$, every term in the definition of VPOT vanishes. Conversely, suppose that
$\mathsf{VPOT}_{G_0,\delta}(G_1,G_2)=0$. Since all summands in the definition of VPOT are nonnegative, each infimum must be zero. Moreover, since the infimum is taken over finitely many indices,
for every $i\in[k_0]$ there exists $j_i\in[2k]$ such that
\[
    \mathsf{POT}_{r_{i,j_i}}\!\left(
        \left.G_1\right|_{\mathcal{V}_{G_0,\delta}^{i,j_i}},
        \left.G_2\right|_{\mathcal{V}_{G_0,\delta}^{i,j_i}}
    \right)=0.
\]
Each selected extension contains its corresponding Voronoi cell. As a result, the collection $\{\mathcal{V}_{G_0,\delta}^{i,j_i}:i\in[k_0]\}$ still covers $\Gamma$.
Proposition~\ref{proposition:local_equal_implies_global_equal} then gives
$G_1=G_2$. 
\subsection{Density Estimation Rate}

In the preceding subsection, we introduced the necessary notation and the proposed VPOT loss for comparing mixing measures. We now proceed to study the convergence rate of the maximum likelihood density estimator. Before presenting the density-estimation bound, we state two regularity conditions on the component family under which the estimation error can be uniformly controlled over the entire model class.\\

\noindent
\textbf{Assumption $(A)$} \textit{(Uniform Lipschitz continuity):}
There exists a constant $L>0$ such that, for all
$\gamma,\gamma'\in\Gamma$,
\begin{align*}
    \sup_{x\in\mathbb{R}^{\bar{d}}}
    \left|f(x\mid\gamma)-f(x\mid\gamma')\right|
    \leq L\|\gamma-\gamma'\|_2.
\end{align*}
\textbf{Assumption $(B)$} \textit{(Uniformly non-heavy tail):} There exist two positive constants $c$ and $\zeta$ such that for each $\gamma \in \Gamma$, we have 
    \begin{equation*}
        f(x\mid \gamma) \leq c\cdot\min\left(1,\dfrac{1}{\|x\|^{\bar{d}+\zeta}}\right). 
    \end{equation*}
These assumptions are mild and are satisfied by many standard parametric families, including multivariate Gaussian and Student-$t$ distributions. Indeed, Assumption~$(A)$ follows from a uniform bound on the
derivatives of the density with respect to its parameter. More precisely, if $\Gamma$ is convex and $\sup_{x\in\mathbb{R}^{\bar{d}}}\sup_{\gamma\in\Gamma}
    \left\|\nabla_{\gamma}f(x\mid\gamma)\right\|_2<\infty$,
then the mean value theorem yields $\sup_{x\in\mathbb{R}^{\bar{d}}}
    \left|f(x\mid\gamma)-f(x\mid\gamma')\right|
    \leq L\|\gamma-\gamma'\|_2$, for some constant $L>0$.
 Assumption~$(B)$ is also readily verified. It is sufficient that the density family is uniformly bounded and, for some $\zeta>0$, $\sup_{\gamma\in\Gamma}
    \sup_{\|x\|_2\geq 1}
    \|x\|_2^{\bar{d}+\zeta}f(x\mid\gamma)<\infty$. The uniform boundedness controls the densities on bounded subsets of
$\mathbb{R}^{\bar{d}}$, while the second condition controls their tail behavior. In particular, Assumption~$(B)$ accommodates polynomially decaying
heavy-tailed distributions, including multivariate Student distributions,
as well as distributions with faster exponential or Gaussian decay.
Polynomially decaying and regularly varying distribution families are
discussed extensively in~\cite{resnick2007heavy}.
\begin{proposition}
    \label{prop:MLE_estimation}
     Under Assumptions (A) and (B) and given the MLE $\widehat{G}_n$ defined in equation~\eqref{eq:MLE}, there exists a universal constant $C>0$ such that
    \begin{equation}
    \label{prop:model_convergence}
         \sup_{G\in \mathcal{G}_{\leq k}(\Gamma)} \mathbb{E}_{G}[\|p_{\widehat{G}_n} - p_{G}\|_1] \leq C(\log(n)/n)^{1/2}. 
    \end{equation}
\end{proposition}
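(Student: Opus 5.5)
The plan is to use the standard empirical-process bound for maximum likelihood density estimators (van de Geer, Theorem 7.4 style; also Ho--Nguyen), which controls the Hellinger risk through the bracketing entropy of the density class. We then pass to $L_1$ via $\|p-q\|_1\le 2\sqrt{2}\,h(p,q)$. Let $\mathcal{P}_k=\{p_G:G\in\mathcal{G}_{\leq k}(\Gamma)\}$. The general result states the following. If $\Psi(\varepsilon)$ upper bounds the Hellinger bracketing integral $\int_{\varepsilon^2/2^{13}}^{\varepsilon}H_B^{1/2}(u,\bar{\mathcal P}_k^{1/2}(\varepsilon),\|\cdot\|_2)\,du\vee\varepsilon$ (with $\bar{\mathcal P}_k$ the class of midpoints $(p+p_G)/2$), and $\Psi(\varepsilon)/\varepsilon^2$ is nonincreasing, then for any $\delta_n$ with $\sqrt n\,\delta_n^2\gtrsim\Psi(\delta_n)$ we have
\[
\mathbb{P}_G\bigl(h(p_{\widehat G_n},p_G)>\delta\bigr)\le c\exp(-n\delta^2/c^2)\quad\text{for all }\delta\ge\delta_n .
\]
The constant $c$ does not depend on $G$. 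This uniformity is what yields the supremum over $G$ in the statement.

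\textbf{Step 1 (entropy bound).} We will show $H_B(\varepsilon,\mathcal P_k,\|\cdot\|_1)\lesssim\log(1/\varepsilon)$, with constants depending only on $k,d,\bar d,L,c,\zeta,\Gamma$. Take an $\eta$-net $\{\gamma^{(a)}\}$ of compact $\Gamma$ of size $\lesssim\eta^{-d}$, and an $\eta$-net of the simplex $\Delta_{k-1}$ in $\ell_1$ of size $\lesssim\eta^{-(k-1)}$. For any $p_G$ there is a net element $p_{\bar G}$ with
\[
\sup_x|p_G(x)-p_{\bar G}(x)|\le kL\eta+c\,\eta,
\]
by Assumption (A) and the uniform bound $f\le c$ from Assumption (B). To turn this sup-norm approximation into $L_1$ brackets, we use the tail envelope. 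Set
\[
H(x)=\min\bigl(1,\|x\|^{-\bar d-\zeta}\bigr),
\]
which dominates all densities up to the factor $c$. Take brackets
\[
\ell=\max(p_{\bar G}-\eta',0),\qquad u=\min(p_{\bar G}+\eta',\,cH),
\]
with the addition only on the ball $\|x\|\le R$, and $u=cH$ outside. Then
\[
\|u-\ell\|_1\lesssim \eta' R^{\bar d}+R^{-\zeta}.
\]
Choosing $R=\varepsilon^{-1/\zeta}$ and $\eta'=\varepsilon^{1+\bar d/\zeta}$ gives a bracket of $L_1$ size at most $\varepsilon$. It uses a number of brackets polynomial in $1/\varepsilon$, hence log-entropy $\lesssim\log(1/\varepsilon)$. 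Hellinger brackets for $\sqrt{p}$ follow from $\|\sqrt u-\sqrt\ell\|_2^2\le\|u-\ell\|_1$, so
\[
H_B(\varepsilon,\mathcal P_k^{1/2},\|\cdot\|_2)\lesssim\log(1/\varepsilon).
\]
The same bound holds for the midpoint class $\bar{\mathcal P}_k$, since midpoints of brackets are brackets.

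\textbf{Step 2 (rate).} With this entropy bound,
\[
\Psi(\varepsilon)\asymp\varepsilon\sqrt{\log(1/\varepsilon)}.
\]
Then $\delta_n=C_0\sqrt{\log n/n}$ satisfies the entropy condition $\sqrt n\,\delta_n^2\gtrsim\Psi(\delta_n)$. Integrating the tail bound gives
\[
\mathbb{E}_G\bigl[h(p_{\widehat G_n},p_G)\bigr]\le\delta_n+\int_{\delta_n}^{1}c\,e^{-nt^2/c^2}\,dt\lesssim\sqrt{\log n/n},
\]
uniformly in $G$. Multiplying by $2\sqrt2$ converts this to the stated $L_1$ bound.

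\textbf{Main obstacle.} The delicate part is Step 1: making the sup-norm closeness from Assumption (A) into genuine $L_1$ brackets on the unbounded domain $\mathbb R^{\bar d}$. This is exactly where the polynomial tail in Assumption (B) enters. We must also check that all constants are free of $G$ and that the $\varepsilon$-dependence stays logarithmic rather than producing a polynomial loss; the trade-off between $R$ and $\eta'$ above is what I expect to control this. A minor point is that the MLE over a non-convex class needs the midpoint device, which is standard.
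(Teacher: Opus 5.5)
Your proposal is correct and follows essentially the same route as the paper. A Euclidean net on $\Pi_k\times\Gamma^k$ combined with Assumption~(A) gives a sup-norm cover, and the envelope from Assumption~(B) turns it into $L_1$ (hence Hellinger) brackets with entropy $\lesssim\log(1/\varepsilon)$; van de Geer's MLE tail bound with $\delta_n\asymp\sqrt{\log n/n}$, tail integration, and $\|p-q\|_1\le 2\sqrt{2}\,h(p,q)$ then finish the argument. The differences are cosmetic: you cite the packaged Theorem~7.4 where the paper re-derives it by peeling with the Theorem~5.11 maximal inequality, and you tune the truncation radius and bracket width separately rather than through a single parameter $\eta$.
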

\noindent
The proof of Proposition 2 is deferred to Appendix~\ref{proof:density_estimation_rate}. This result establishes the uniform $L_1$-consistency of the MLE density
estimator, with the worst-case expected $L_1$ error over $G\in\mathcal{G}_{\leq k}(\Gamma)$ decreasing at the rate $(\log n/n)^{1/2}$. Therefore, ignoring the logarithmic factor $\sqrt{\log n}$, the estimator exhibits the usual parametric order $n^{-1/2}$. In particular, the bound provides a common worst-case guarantee for density estimation throughout $\mathcal G_{\leq k}(\Gamma)$, since its constant does not depend on the data-generating mixing measure. With the density estimation error controlled at this rate, the remaining task for parameter estimation is to quantify how a discrepancy between mixing measures is reflected in the corresponding mixture densities. In Section \ref{sec:uniform_bound}, we establish this connection through a comparison between the $L_1$ density loss and the proposed VPOT loss. This comparison makes the density estimation bound above directly useful for studying the underlying mixing measure: once the $L_1$ discrepancy is controlled, the corresponding VPOT discrepancy can also be controlled. Because VPOT is constructed to account for the local configuration of mixture components, this connection allows the uniform density guarantee to yield more refined parameter estimation rates that reflect the local configuration of the components.

\section{Uniform Bounds for Finite Mixture Estimation}
\label{sec:uniform_bound}


\subsection{Uniform Upper Bound}
\label{sec:uniform_upper_bound}

In this subsection, we establish uniform upper bounds for parameter estimation in finite mixtures under the proposed VPOT loss. Our analysis relies on comparing discrepancies between the induced mixture distributions with discrepancies between their underlying mixing measures. Since the latter comparison is governed by the local behavior of the component distributions as their parameters approach one another, we require suitable smoothness, identifiability, and continuity conditions on the family $\{f(\cdot\mid\gamma):\gamma\in\Gamma\}$. 
We summarize the required regularity conditions in the following assumption.\\

\noindent
\textbf{Assumption $C(p)$.} We say that the family of density functions $\{f(\cdot\mid \gamma):\gamma \in \Gamma\}$ satisfies \emph{Assumption $C(p)$} if it meets the following conditions: 
    \begin{enumerate}
        \item Let $x \in \mathbb{R}^{\bar{d}} \mapsto F(x\mid \gamma) = \int_{(-\infty,x)}f(t\mid \gamma)\,dt:= \int_{-\infty}^{x_{\bar{d}}}\cdots\int_{-\infty}^{x_1}f(t\mid \gamma)\,dt_1\cdots dt_{\bar{d}}$ be the cumulative distribution function, its derivative $D_{\gamma}^{\boldsymbol{\alpha}}F(\cdot\mid \gamma)$ exists for every multi-index $\boldsymbol{\alpha}$ satisfying
        $0\le |\boldsymbol{\alpha}|\le p$.
        \item ($p$-strong identifiability) The family $\{F(\cdot\mid\gamma):\gamma\in\mathbb{R}^d\}$ is strongly identifiable up to order $p$. That is, for any set of $
        \ell$ distinct points $\gamma_1,\ldots,\gamma_{\ell} \in \Gamma$, the identity
    \begin{equation*}
\left\|\sum_{\boldsymbol{\alpha}=\boldsymbol{0},|\boldsymbol{\alpha}|\leq p }\sum_{1\leq j \leq \ell}c_{\boldsymbol{\alpha},j}D_{\gamma}^{\boldsymbol{\alpha}}F(\cdot\mid\gamma_j)\right\|_{\infty} = 0
    \end{equation*}
    implies that $c_{\boldsymbol{\alpha},j} = 0$ for all $\boldsymbol{\alpha},j$. 
    \item There exists a uniform modulus $\omega:\mathbb{R}^d \to \mathbb{R}$ such that $\lim_{h\to 0} \omega(h) = 0 $ and, for any multi-index $\boldsymbol{\alpha}$ with $|\boldsymbol{\alpha}| = p$, 
    \begin{equation}
\label{eqn:assumption_for_upper_bound_uniform_modulus}
        \sup_{x \in \mathbb{R}^{\bar{d}}}|D_{\gamma}^{\boldsymbol{\alpha}}F(x\mid\gamma) - D_{\gamma}^{\boldsymbol{\alpha}}F(x\mid \gamma')| \leq \omega(\gamma - \gamma'), \quad \gamma, \gamma' \in \Gamma. 
    \end{equation}
    \end{enumerate}
Assumption~$C(p)$ encompasses various regularity conditions required for our convergence analysis: smoothness, strong identifiability, and continuity of higher order derivatives. The first condition ensures that the distribution function is sufficiently smooth with respect to its parameter, so that perturbations of nearby mixture components can be characterized through Taylor expansions up to order $p$. The second condition imposes $p$-th order strong identifiability, requiring the derivatives of the component distribution functions evaluated at distinct parameter values to be linearly independent. This condition prevents nontrivial perturbations of the mixing measure from being completely canceled at the distribution level and is therefore crucial for recovering parameter discrepancies from discrepancies between the induced mixture distributions. The third condition requires the highest-order derivatives to vary uniformly continuously with their parameters, which ensures uniform control of the Taylor remainder as parameters approach one another. 
A sufficient condition is that \(F(\cdot\mid\gamma)\) possesses partial derivatives of order \(p+1\) and that
$ \sup_{x\in\mathbb R^{\bar{d}}} \left|D^{\boldsymbol\alpha}_{\gamma}F(x\mid\gamma)\right| <\infty $
for every multi-index \(\boldsymbol\alpha\) satisfying \(|\boldsymbol\alpha|=p+1\).
Taken together, these assumptions ensure that the behavior of the mixture distribution faithfully reflects perturbations of the underlying mixing measure, uniformly over configurations in which mixture components may become arbitrarily close, and thereby provide the key regularity conditions for deriving the VPOT-based parameter estimation bounds in this section.\\

\noindent
Next, we characterize families of probability distributions that satisfy Assumption $C(p)$ in the following proposition.
\begin{proposition}
\label{dung:prop_strong_identifiability}
Consider the location family of densities $\{f(x\mid\gamma)=f_0(x-\gamma):x,\gamma\in\mathbb{R}^d\}$,
and let $F(x\mid\gamma)$ be their corresponding cumulative distribution function. Suppose that $f_0\in C^{p+1}(\mathbb{R}^d)$, where $p\geq 1$, and $D_{\gamma}^{\boldsymbol{\alpha}} f_0\in L^1(\mathbb{R}^d)$, for every $\boldsymbol{\alpha}\in\mathbb{N}^d$ satisfying $|\boldsymbol{\alpha}|\leq p$. Then, the family $\left\{ F(\cdot\mid\gamma):\gamma\in\mathbb{R}^d\right\}$
is strongly identifiable up to order $p$. 
In addition, if for each set $S$
and a multi-index $\boldsymbol{\beta}$ such that $|S|+|\boldsymbol{\beta}| = p+1$, the integral 
\begin{equation}
\label{eqn:dung_equivalent_form_for_continuous_modulus}
    \sup_{z\in\mathbb{R}^{|S|}}
    \int_{\mathbb{R}^{d-|S|}}
    \left|D^{\boldsymbol{\beta}} f_0(z,y)\right|
    \,dy
    <\infty,
\end{equation}
then it follows that $\sup_{x \in \mathbb{R}^d}|D_{\gamma}^{\boldsymbol{\alpha}}F(x\mid \gamma)| < \infty$, for all $|\boldsymbol{\alpha}|=p+1$. 
\end{proposition}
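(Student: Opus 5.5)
The plan is to prove the two claims separately, the first with the Fourier transform and the second by writing the parameter derivatives of $F$ as integrals of derivatives of $f_0$.

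\emph{Strong identifiability.} For a location family, $F(x\mid\gamma)=F_0(x-\gamma)$, where $F_0$ is the distribution function of $f_0$. Hence $D_\gamma^{\boldsymbol\alpha}F(x\mid\gamma)=(-1)^{|\boldsymbol\alpha|}D^{\boldsymbol\alpha}F_0(x-\gamma)$. Suppose $\sum_{\boldsymbol\alpha,j}c_{\boldsymbol\alpha,j}D_\gamma^{\boldsymbol\alpha}F(\cdot\mid\gamma_j)\equiv0$ with $\gamma_1,\ldots,\gamma_\ell$ distinct. Since $F_0$ is itself not integrable, we first differentiate once in each coordinate $x_1,\ldots,x_d$, which is legitimate because $f_0\in C^{p+1}$. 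This gives
\begin{equation*}
\sum_{\boldsymbol\alpha,j}c_{\boldsymbol\alpha,j}(-1)^{|\boldsymbol\alpha|}D^{\boldsymbol\alpha}f_0(x-\gamma_j)=0 \quad\text{for all } x.
\end{equation*}
Every term is in $L^1$ by hypothesis, so we may take Fourier transforms to get
\begin{equation*}
\hat f_0(\xi)\sum_{j}e^{-i\langle\xi,\gamma_j\rangle}P_j(\xi)=0, \qquad P_j(\xi):=\sum_{\boldsymbol\alpha}c_{\boldsymbol\alpha,j}(-i\xi)^{\boldsymbol\alpha}.
\end{equation*}
The function $\hat f_0$ is continuous with $\hat f_0(0)=1$, so it is nonzero on a ball $B$ around $0$. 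Therefore $Q(\xi):=\sum_j e^{-i\langle\xi,\gamma_j\rangle}P_j(\xi)$ vanishes on $B$. But $Q$ is real-analytic (indeed entire) in $\xi$, so $Q\equiv0$ on $\mathbb R^d$.

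The main step is to show that exponential polynomials with distinct frequencies are linearly independent. We choose a direction $v$ such that the numbers $\langle v,\gamma_j\rangle$ are pairwise distinct; such $v$ exist because each coincidence $\langle v,\gamma_j-\gamma_{j'}\rangle=0$ cuts out a hyperplane. Restricting $Q$ to lines $\xi=tv+w$ turns it into a one-variable exponential polynomial $\sum_j e^{-it\langle v,\gamma_j\rangle}\tilde P_j(t)$ with distinct frequencies. The classical argument then shows every $\tilde P_j\equiv0$: repeatedly divide by the exponential with the extreme frequency and differentiate to lower degrees, or equivalently use the independence of $t^m e^{\lambda t}$. This holds for every $w$, so each $P_j\equiv0$ as a polynomial in $\xi$. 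Since the monomials $(-i\xi)^{\boldsymbol\alpha}$ are linearly independent, all $c_{\boldsymbol\alpha,j}=0$. I expect this step to be the main obstacle, together with justifying the passage from sup-norm vanishing of the $F$-combination to the $f_0$-combination. For the latter, note that the mixed derivative $\partial_{x_1}\cdots\partial_{x_d}$ of an identically zero function is zero, and that differentiation in $x$ commutes with $D_\gamma^{\boldsymbol\alpha}$ by the smoothness of $f_0$.

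\emph{Bounded derivatives of order $p+1$.} Write $D_\gamma^{\boldsymbol\alpha}F(x\mid\gamma)=(-1)^{|\boldsymbol\alpha|}D^{\boldsymbol\alpha}F_0(x-\gamma)$ with $|\boldsymbol\alpha|=p+1$. Let $S$ be the set of coordinates $k$ with $\alpha_k\geq1$, and use one derivative in each coordinate of $S$ to undo the corresponding integration in $F_0$. This gives
\begin{equation*}
D^{\boldsymbol\alpha}F_0(x)=\int_{\prod_{k\notin S}(-\infty,x_k]}D^{\boldsymbol\beta}f_0(x_S,y)\,dy,
\end{equation*}
where $\boldsymbol\beta=\boldsymbol\alpha-\mathbf 1_S$, so that $|S|+|\boldsymbol\beta|=p+1$. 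Bounding the integral by the integral of the absolute value over all of $\mathbb R^{d-|S|}$ and taking the supremum over $z=x_S$, the hypothesis \eqref{eqn:dung_equivalent_form_for_continuous_modulus} yields $\sup_x|D^{\boldsymbol\alpha}_\gamma F(x\mid\gamma)|<\infty$. The differentiation under the integral sign used here is justified by $f_0\in C^{p+1}$ together with the integrability that condition \eqref{eqn:dung_equivalent_form_for_continuous_modulus} provides.
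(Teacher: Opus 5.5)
Your proposal is correct and follows essentially the same route as the paper. You differentiate once in each coordinate to pass to the densities, take Fourier transforms, and use $\hat f_0\neq 0$ near the origin together with analyticity. You then reduce to one-variable exponential polynomials along a direction $v$ with pairwise distinct $\langle v,\gamma_j\rangle$. For the second claim, you write $D^{\boldsymbol\alpha}F_0$ as an integral of $D^{\boldsymbol\beta}f_0$ with $\boldsymbol\beta=\boldsymbol\alpha-\mathbf 1_S$.

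The only cosmetic difference is in the last step of the first claim. You restrict to all parallel lines $tv+w$ and get $P_j\equiv 0$ directly. The paper instead uses lines through the origin and lets $v$ range over an open dense set, which kills each homogeneous part $Q_{j,r}$.
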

\noindent
The proof of Proposition \ref{dung:prop_strong_identifiability} is deferred to Appendix~\ref{sec:matrix_utils_separation_lemmma}. This proposition verifies that a broad class of multivariate location
families satisfies the main structural conditions used in our analysis.
Strong identifiability rules out nontrivial cancellations among shifted CDFs
and their parameter derivatives up to order $p$. Furthermore, condition~\eqref{eqn:dung_equivalent_form_for_continuous_modulus} guarantees a uniform bound
on the derivatives of the CDF of order $p+1$, which controls the Taylor
remainder and yields the uniform modulus condition in equation~\eqref{eqn:dung_equivalent_form_for_continuous_modulus}.
Therefore, the proposition provides readily verifiable sufficient conditions
for applying our general estimation theory to multivariate location mixtures.

\begin{example}
\label{example:dung_upper_bound}
\begin{enumerate}
    \item  \textbf{Multivariate Gaussian location family.} Let $\Sigma\in\mathbb{R}^{d\times d}$ be a positive definite matrix and 
$$
    f_0(x)
    =
    \frac{1}{(2\pi)^{d/2}|\Sigma|^{1/2}}
    \exp\left(-\frac{1}{2}x^\top\Sigma^{-1}x\right), \qquad x\in\mathbb{R}^d.
$$
For every multi-index $\boldsymbol{\beta}$, there exists a polynomial $P_{\boldsymbol{\beta}}$ such that $D^{\boldsymbol{\beta}} f_0(x)=P_{\boldsymbol{\beta}}(x)f_0(x)$. Consequently, $h\in C^\infty(\mathbb{R}^d)$ and $D^{\boldsymbol{\beta}} f_0\in L^1(\mathbb{R}^d)$, for every multi-index $\boldsymbol{\beta}$. Moreover, the Gaussian decay implies that,
for every subset $S\subseteq[d]$, 
\[
    \sup_{z\in\mathbb{R}^{|S|}}
    \int_{\mathbb{R}^{d-|S|}}
    \left|D^{\boldsymbol{\beta}} f_0(z,y)\right|
    \,dy
    <\infty.
\]
Thus, condition \eqref{eqn:dung_equivalent_form_for_continuous_modulus} holds. In addition, the conditions that $f_0 \in C^p(\mathbb{R}^d)$ and $D^{\boldsymbol{\alpha}}f_0\in L^1(\mathbb{R}^d)$ are also satisfied.
Therefore, the multivariate Gaussian location family satisfies Assumption $C(p)$, for every finite order $p$.

\item \textbf{Elliptical multivariate Student-$t$ location family.} Let $\nu>0$, $\Sigma\in\mathbb{R}^{d\times d}$ be a positive definite matrix and
\[
    f_0(x)
    =
    \frac{\Gamma\left((\nu+d)/2\right)}
    {\Gamma\left(\nu/2\right)(\nu\pi)^{d/2}|\Sigma|^{1/2}}
    \left(
        1+\frac{x^\top\Sigma^{-1}x}{\nu}
    \right)^{-(\nu+d)/2}, \qquad x\in\mathbb{R}^d.
\]
This density belongs to $C^\infty(\mathbb{R}^d)$. For every multi-index
$\boldsymbol{\beta}$, there exists a finite constant $C_{\boldsymbol{\beta}}$ such that
\[
    \left|D^{\boldsymbol{\beta}} f_0(x)\right|
    \leq
    C_{\boldsymbol{\beta}}(1+\|x\|)^{-(\nu+d+|\boldsymbol{\beta}|)}.
\]
Hence, $D^{\boldsymbol{\beta}} f_0\in L^1(\mathbb{R}^d)$,
for every multi-index $\boldsymbol{\beta}$. If $S\subseteq[d]$ and $s=|S|\geq 1$, then
\[
    \int_{\mathbb{R}^{d-s}}
    \left|D^{\boldsymbol{\beta}} f_0(z,y)\right|
    \,{d}y
    \leq
    C_{\boldsymbol{\beta},S}
    (1+\|z\|)^{-(\nu+s+|\boldsymbol{\beta}|)}
    \leq C_{\boldsymbol{\beta},S}.
\]
Therefore, condition \eqref{eqn:dung_equivalent_form_for_continuous_modulus} holds. In addition, the conditions that $f_0 \in C^p(\mathbb{R}^d)$ and $D^{\boldsymbol{\alpha}}f_0\in L^1(\mathbb{R}^d)$ are also provably satisfied.  Thus, the multivariate Student-$t$ location family satisfies Assumption $C(p)$, for every finite order $p$.
\end{enumerate}
\end{example}
\noindent
Recall that Proposition~\ref{prop:MLE_estimation} provides a uniform parametric-rate bound, up to a logarithmic factor, for estimating the mixture density in the $L_1$ distance. To translate this density-level guarantee into a corresponding convergence rate for the underlying mixing measure, it remains to establish a quantitative relationship between discrepancies at the density and parameter levels. More specifically, we need to show that the $L_1$ distance between two mixture densities uniformly dominates the proposed VPOT loss between their mixing measures. Such a lower bound rules out the possibility that two mixing measures are substantially separated under our parameter loss while inducing nearly indistinguishable mixture densities. This motivates Proposition~\ref{prop:L_1_distance_greater_than_POT}, where we establish local and global inequalities linking $\|f_{G}-f_{G_*}\|_1$ to the corresponding VPOT between two mixing measures $G$ and $G_*$. Combining these inequalities with the density estimation bound in Proposition~\ref{prop:MLE_estimation} then yields the desired upper bounds for parameter estimation.
\begin{proposition}
\label{prop:L_1_distance_greater_than_POT}
Suppose that the family of density functions $\{f(\cdot\mid\gamma):\gamma\in \Gamma\}$ satisfies Assumption $C(2k)$. Let $\delta>0$, $G_0 \in \mathcal{G}_{k_0}(\Gamma)$ be a given mixing measure and $\mathcal{B}_{W_1}(G_0,\varepsilon) = \{G \in \mathcal{G}_{\leq k}(\Gamma): W_1(G,G_0)<\varepsilon\}$. Then, there exists $\varepsilon > 0$ such that  
\begin{equation}
\label{eqn:L_1_distance_greater_than_D_locally}
        \inf_{G,G_* \in \mathcal{B}_{W_1}(G_0,\varepsilon)} \|p_G-p_{G_*}\|_1/  \mathsf{VPOT}_{G_0,\delta}(G,G_*) > 0,
    \end{equation}
and more globally, 
\begin{equation}
    \label{eqn:L_1_distance_greater_than_D_globally}
    \inf_{G,G'\in \mathcal{G}_{\leq k}(\Gamma)}\|p_G-p_{G'}\|_1/ \mathsf{VPOT}_{G,\delta}(G,G') > 0. 
\end{equation}
\end{proposition}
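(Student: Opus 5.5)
We argue by contradiction, in the style of Heinrich and Kahn and of Manole and Ho, but with the POT discrepancy localized to the extended Voronoi cells. First we replace $\|p_G-p_{G_*}\|_1$ by the sup-norm CDF discrepancy. Since $\|F_G-F_{G_*}\|_\infty\le\|p_G-p_{G_*}\|_1$, it suffices to prove both inequalities with $\|F_G-F_{G_*}\|_\infty$ in the numerator. This matches Assumption $C(2k)$, which is formulated for the CDFs.

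For the local inequality \eqref{eqn:L_1_distance_greater_than_D_locally}, suppose it fails for every $\varepsilon>0$. Then there are sequences $G_n,G_{*,n}\in\mathcal{B}_{W_1}(G_0,1/n)$ with
\[
\|F_{G_n}-F_{G_{*,n}}\|_\infty/\mathsf{VPOT}_{G_0,\delta}(G_n,G_{*,n})\to0 .
\]
Both sequences converge to $G_0$ in $W_1$. By compactness we pass to a subsequence on which the numbers of atoms are constant and every atom location converges; every limit is an atom of $G_0$.

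Using the pigeonhole argument given after the definition of the extended cells, we fix for each $i\in[k_0]$ an index $j_i$ such that no limiting location lies on $\partial\mathcal{V}^{i,j_i}_{G_0,\delta}$. Since $\delta>0$ is fixed and the limits are atoms of $G_0$, for large $n$ the cell $\mathcal{V}^{i,j_i}_{G_0,\delta}$ contains exactly the atoms of $G_n$ and $G_{*,n}$ that converge to $\gamma_{0i}$, apart from those converging to atoms $\gamma_{0i'}$ that are within distance $\delta$ of the cell. We can take $\delta$ small relative to the separation of $G_0$ to exclude these; if $\delta$ is not small, we must instead cluster by limit points inside the cell. In either case,
\[
\mathsf{VPOT}_{G_0,\delta}(G_n,G_{*,n})\le\sum_i \mathsf{POT}_1^{r_i}\bigl(G_n|_{V_i},G_{*,n}|_{V_i}\bigr),
\]
where $V_i=\mathcal{V}^{i,j_i}_{G_0,\delta}$ and $r_i+1$ is the number of competing atoms in $V_i$. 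We then group the atoms by their limit point $\theta$. Within each group we Taylor expand $F(\cdot\mid\gamma)$ around $\theta$ up to order $2k\ge r_i$. This gives
\[
F_{G_n}-F_{G_{*,n}}=\sum_\theta\sum_{|\boldsymbol\alpha|\le 2k}c^{(n)}_{\boldsymbol\alpha,\theta}D^{\boldsymbol\alpha}_\gamma F(\cdot\mid\theta)+R_n ,
\]
where $c^{(n)}_{\boldsymbol{0},\theta}$ is the mass difference in the group and the higher coefficients are signed moment differences $\sum\pi(\gamma-\theta)^{\boldsymbol\alpha}/\boldsymbol\alpha!$. Part 3 of $C(2k)$ controls the remainder $R_n$ uniformly.

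\textbf{Main obstacle.} The key step is a moment-comparison inequality: for a group with $m$ atoms in total from the two measures,
\[
\max_{|\boldsymbol\alpha|\le m-1}|c_{\boldsymbol\alpha,\theta}|\gtrsim \mathsf{POT}_1^{\,m-1}
\]
holds on the group, and the remainder is $o$ of this quantity. In dimension one this is the classical bound "$W_1^{m-1}\lesssim$ moment differences up to order $m-1$" for two discrete measures with $m$ atoms in total. The extra mass-discrepancy term of POT is exactly $|c_{\boldsymbol 0,\theta}|$. For $d>1$ we would reduce to one dimension by projecting onto finitely many directions and using that polynomials of degree $m-1$ separate $m$ points; pinning this down uniformly is the delicate part. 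We first try a compactness/contradiction argument on the normalized configurations: rescale the atoms by the group diameter, and observe that if all normalized moments vanish in the limit, the two limiting normalized measures agree. We also need the comparison $\mathsf{POT}_1^{r_i}\lesssim\sum_\theta\mathsf{POT}_1^{m_\theta-1}$ across groups. It holds because the atoms of different groups are separated, so mismatched mass costs only its amount, and $m_\theta-1\le r_i$ with POT bounded.

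We then divide the expansion by $D_n=\sum_i\sum_\theta\max_{\boldsymbol\alpha}|c^{(n)}_{\boldsymbol\alpha,\theta}|$, which dominates the VPOT by the step above. Passing to the limit along a further subsequence, the normalized coefficients converge to a vector that is not all zero, while
\[
\sum_\theta\sum_{\boldsymbol\alpha} \bar c_{\boldsymbol\alpha,\theta}D^{\boldsymbol\alpha}F(\cdot\mid\theta)\equiv0
\]
in sup-norm. The limit points $\theta$ are distinct, so this contradicts $2k$-strong identifiability.

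For the global inequality \eqref{eqn:L_1_distance_greater_than_D_globally}, note that the reference is now $G$ itself. Suppose again that the ratio tends to zero along sequences $G_n,G'_n$, and extract convergent subsequences $G_n\to\bar G$, $G'_n\to\bar G'$. If $\bar G\ne\bar G'$, then $\|p_{\bar G}-p_{\bar G'}\|_1>0$ by identifiability, since first-order strong identifiability implies ordinary identifiability. Since VPOT is bounded, which holds because $\mathsf{POT}_1\le\operatorname{diam}\Gamma+1$, this is a contradiction. If $\bar G=\bar G'$, the reference measures $G_n$ vary with $n$, but the Voronoi cells of $G_n$ still cover $\Gamma$, and the same clustering and Taylor argument applies with clusters around the limit points of $\bar G$. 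The cell-choice pigeonhole is done per $n$ using the extensions of radius $j\delta/(2k)$. The remaining issue is atoms of $G_n$ that merge in the limit, so that several Voronoi cells degenerate onto one limit point. We handle it by bounding each cell's POT term by the corresponding cluster moments and summing over the boundedly many cells.
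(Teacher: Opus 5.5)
Your architecture matches the paper's Case~2. Your groups of atoms sharing a limit point are the children of the root of the coarse-graining tree. Up to the choice of center, your coefficients are the paper's, since by the multinomial identity $c(\boldsymbol p\mid J,\oupgamma_J)\varepsilon_J^{|\boldsymbol p|}=\sum_{j\in J}\ouppi_j(\oupgamma_j-\oupgamma_J)^{\boldsymbol p}/\boldsymbol p!$. The gap is that the two analytic facts you need about each group are asserted rather than proved, and the first one does not follow from the tool you cite.

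\textbf{Remainder control.} You say Part~3 of $C(2k)$ controls $R_n$ and that $R_n$ is $o(\max_{\boldsymbol\alpha}|c_{\boldsymbol\alpha,\theta}|)$. The uniform modulus only yields $\|R_n\|_\infty\lesssim\sum_j|\ouppi_j|\,\|\oupgamma_j-\theta\|^{2k}\omega(\max_j\|\oupgamma_j-\theta\|)$. This measures how far the atoms are from $\theta$, not how far $G_n$ is from $G'_n$. Take $G_0=\delta_\theta$, $G_n=\delta_{\theta+\varepsilon_n e_1}$, $G'_n=\delta_{\theta+(\varepsilon_n+\eta_n)e_1}$ with $\eta_n=\varepsilon_n^{4k}$. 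Every coefficient is $O(\eta_n)$, so $D_n\asymp\eta_n$. The bound on $R_n$, however, is of order $\varepsilon_n^{2k}\omega(\varepsilon_n)\gg\eta_n$, e.g.\ when $\omega(h)\asymp h$. After dividing by $D_n$ the remainder need not vanish, so the limiting identity that should contradict strong identifiability is not obtained.

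The true remainder is small, but proving it requires exploiting cancellation between nearby atoms at every intermediate scale. That is exactly what the tree recursion supplies. Each child's expansion at $\oupgamma_K$ is re-expanded at $\oupgamma_J$, and the Peano remainder at scale $\varepsilon_J$ is charged against $\|c_K\|(\varepsilon_K/\varepsilon_J)^{|\boldsymbol\ell|}$. This gives $\sup_x|R(x\mid J)|=o(\|c_J\|\varepsilon_J^{2k})$ (Step~4 of the coarse-graining expansion). A multivariate divided-difference representation could serve instead, but some multi-scale argument is indispensable.

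\textbf{Moment comparison.} The second fact, $\max_{|\boldsymbol\alpha|\le m-1}|c_{\boldsymbol\alpha,\theta}|\gtrsim\mathsf{POT}_1^{m-1}$, has the same multi-scale difficulty, and your compactness sketch does not resolve it. Rescaling by the group diameter only handles the non-degenerate regime. When atoms of $G_n$ and $G'_n$ inside a group coalesce at several different rates, the rescaled limits coincide and the quotient is of $0/0$ type. Compactness then says nothing about the exponent $m-1$. The paper never needs this as a standalone lemma, because it places both sides on the same tree. Proposition~\ref{prop:POT_based_on_tree} gives $\mathsf{POT}_q^q\asymp\max_{J}|\ouppi_J|\varepsilon_{J^\uparrow}^q+|m(G_n)-m(G'_n)|$. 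The rank/separation lemma gives $\|c_J\|\gtrsim\max_{K\in\mathrm{Desc}(J)}|\ouppi_K|(\varepsilon_{K^\uparrow}/\varepsilon_J)^{|J|-1}$. The comparison is therefore made scale by scale. Your global part inherits both gaps.

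There are also two smaller slips. First, $W_1(G_n,G_0)\to0$ does not force every limit to be an atom of $G_0$, since atoms with vanishing weight may converge anywhere; the paper's Case~2 allows for this. Your pigeonhole should count limit locations, of which there are at most $2k-1$. Second, for a singleton group the exponent $m_\theta-1=0$ makes your comparison inequality false, because $\mathsf{POT}_1^0=1$. It must be truncated at $1$, as is done for $r_{i,j}$.
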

\noindent
The proof of Proposition \ref{prop:L_1_distance_greater_than_POT} is given in Section \ref{sec:proof_upper_bound}. Proposition~\ref{prop:L_1_distance_greater_than_POT} provides the key bridge between density estimation and parameter estimation under the VPOT discrepancy. In particular, part~(i) shows that, within a sufficiently small neighborhood of a fixed mixing measure $G_0$, the $L_1$ distance between two mixture densities uniformly controls their discrepancy $\mathsf{VPOT}_{G_0,\delta}$, while part~(ii) establishes a global relation over the entire class $\mathcal{G}_{\leq k}(\Gamma)$. These inequalities ensure that convergence at the density level can be transferred directly to convergence of the corresponding mixing measures under our proposed loss. Consequently, these inequalities allow us to convert the
 convergence of the estimated mixture density into the convergence of the corresponding mixing measure under our VPOT loss, which we exhibit in the following theorem.

\begin{theorem}
\label{thm:upper_bound}
    Let $\delta>0$ and $G_0 \in \mathcal{G}_{k_0}(\Gamma)$ be a given mixing measure. Under Assumptions $(A)$, $(B)$, and $C(2k)$, there exists $\epsilon > 0$ such that  
    \begin{equation}
    \label{eqn:main_result_local_upper_bound}
        \sup_{\substack{G\in \mathcal{G}_{\leq k}(\Gamma)\\ W_1(G,G_0) < \varepsilon}} \mathbb{E}_{G}[\mathsf{VPOT}_{G_0,\delta}(\widehat{G}_n,G)] \lesssim (\log(n)/n)^{1/2},
    \end{equation}
    and more globally, 
    \begin{equation}
    \label{eqn:main_result_global_upper_bound}
        \sup_{G\in \mathcal{G}_{\leq k}(\Gamma)} \mathbb{E}_{G}[\mathsf{VPOT}_{G,\delta}(\widehat{G}_n,G)] \lesssim (\log(n)/n)^{1/2}. 
    \end{equation}
\end{theorem}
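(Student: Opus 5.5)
The theorem follows by combining Proposition~\ref{prop:L_1_distance_greater_than_POT} with Proposition~\ref{prop:MLE_estimation}. Proposition~\ref{prop:L_1_distance_greater_than_POT} turns the $L_1$ density loss into a pointwise upper bound on the VPOT loss. Proposition~\ref{prop:MLE_estimation} supplies the expectation bound for the $L_1$ loss, uniformly in the data-generating measure.

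\textbf{Global bound \eqref{eqn:main_result_global_upper_bound}.} By \eqref{eqn:L_1_distance_greater_than_D_globally} there is a constant $c_0>0$, depending only on $k$, $\delta$, $\Gamma$ and the family $f$, such that
\[
\mathsf{VPOT}_{G,\delta}(G,G')\le c_0^{-1}\|p_G-p_{G'}\|_1 \qquad \text{for all } G,G'\in\mathcal{G}_{\leq k}(\Gamma).
\]
Two checks are needed before applying this with $G'=\widehat{G}_n$.
\begin{itemize}
\item The MLE $\widehat{G}_n$ lies in $\mathcal{G}_{\leq k}(\Gamma)$ by definition \eqref{eq:MLE}.
\item The VPOT loss as defined is symmetric, except that the reference measure and the exponents $r_{i,j}$ depend on the pair. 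Here the reference measure is the true $G$, which is exactly the reference in \eqref{eqn:L_1_distance_greater_than_D_globally} with the roles of $G$ and $G'$ arranged appropriately. So $\mathsf{VPOT}_{G,\delta}(\widehat{G}_n,G)=\mathsf{VPOT}_{G,\delta}(G,\widehat{G}_n)$ is bounded by $c_0^{-1}\|p_{\widehat{G}_n}-p_G\|_1$.
\end{itemize}
This inequality holds deterministically for every realization of the sample. Taking $\mathbb{E}_G$ and applying \eqref{prop:model_convergence} gives
\[
\mathbb{E}_G[\mathsf{VPOT}_{G,\delta}(\widehat{G}_n,G)]\le c_0^{-1}C(\log n/n)^{1/2}.
\]
Neither $c_0$ nor $C$ depends on $G$, so the bound survives taking the supremum over $G\in\mathcal{G}_{\leq k}(\Gamma)$.

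\textbf{Local bound \eqref{eqn:main_result_local_upper_bound}.} Here the difficulty is that \eqref{eqn:L_1_distance_greater_than_D_locally} requires both arguments to lie in $\mathcal{B}_{W_1}(G_0,\varepsilon)$, while $\widehat{G}_n$ may fall outside this ball. I would handle this by splitting on an event. Let $\varepsilon$ be given by Proposition~\ref{prop:L_1_distance_greater_than_POT}, and take the theorem's radius to be $\varepsilon/2$. Fix $G$ with $W_1(G,G_0)<\varepsilon/2$ and define
\[
E_n=\{W_1(\widehat{G}_n,G)<\varepsilon/2\}.
\]

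\emph{On $E_n$:} the triangle inequality gives $\widehat{G}_n\in\mathcal{B}_{W_1}(G_0,\varepsilon)$. Then \eqref{eqn:L_1_distance_greater_than_D_locally} yields $\mathsf{VPOT}_{G_0,\delta}(\widehat{G}_n,G)\le c_1^{-1}\|p_{\widehat{G}_n}-p_G\|_1$.

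\emph{On $E_n^c$:} I would argue in three steps.
\begin{enumerate}
\item The VPOT loss is uniformly bounded. Each POT term is at most $(\mathrm{diam}(\Gamma)+1)^{2k}$, and there are $k_0$ summands, so $\mathsf{VPOT}_{G_0,\delta}\le M$ for a constant $M$.
\item Fixing $\delta'=\varepsilon/2$, there is $\eta>0$ with $\|p_{G'}-p_G\|_1\ge\eta$ whenever $W_1(G',G)\ge\delta'$, for $G,G'\in\mathcal{G}_{\leq k}(\Gamma)$. This follows by a standard compactness argument: $\mathcal{G}_{\leq k}(\Gamma)$ is $W_1$-compact, the map $G\mapsto p_G$ is continuous into $L_1$ under Assumption (A) and the tail condition (B), and the family is identifiable, which follows from strong identifiability in $C(2k)$ at order zero.
\item By Markov's inequality, $\mathbb{P}_G(E_n^c)\le \mathbb{P}_G(\|p_{\widehat{G}_n}-p_G\|_1\ge \eta)\le \eta^{-1}\mathbb{E}_G\|p_{\widehat{G}_n}-p_G\|_1$.
\end{enumerate}
Combining the two pieces,
\[
\mathbb{E}_G[\mathsf{VPOT}_{G_0,\delta}(\widehat{G}_n,G)]\le (c_1^{-1}+M\eta^{-1})\,\mathbb{E}_G\|p_{\widehat{G}_n}-p_G\|_1\lesssim(\log n/n)^{1/2}
\]
by Proposition~\ref{prop:MLE_estimation}, uniformly over the local ball.

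The step I expect to need the most care is the compactness separation in step 2 of the $E_n^c$ argument. It needs $L_1$-continuity of $G\mapsto p_G$, where Assumption~(A) gives sup-norm continuity and (B) supplies uniform integrability of the tails, combined with identifiability. The rest of the proof is bookkeeping on top of Proposition~\ref{prop:L_1_distance_greater_than_POT}, which carries the substantive difficulty.
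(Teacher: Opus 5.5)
Your proposal is correct and follows essentially the same route as the paper. The global bound combines \eqref{eqn:L_1_distance_greater_than_D_globally} directly with Proposition~\ref{prop:MLE_estimation}. The local bound halves the radius and handles $\widehat{G}_n$ far from $G_0$ using boundedness of $\mathsf{VPOT}$ together with a compactness/identifiability lower bound on the $L_1$ distance. The difference is only in packaging: the paper derives a deterministic inequality $\|p_G-p_{G_*}\|_1\geq C\,\mathsf{VPOT}_{G_0,\delta}(G,G_*)$ by contradiction, whereas you split on an event and apply Markov. The Markov step is unnecessary, since on $E_n^c$ you already have $\mathsf{VPOT}_{G_0,\delta}(\widehat{G}_n,G)\leq M\leq (M/\eta)\|p_{\widehat{G}_n}-p_G\|_1$ pointwise.
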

\noindent
The proof of Theorem~\ref{thm:upper_bound} can be found in Section~\ref{sec:proof_upper_bound}. From the above theorem, we observe that under the VPOT, the MLE $\widehat{G}_n$ converges to the ground-truth $G_*$ at the uniform rate of order $n^{-1/2}$, up to a logarithmic factor. However, due to the structure of the VPOT loss, this result further implies heterogeneous local and global convergence rates of fitted parameters compared to those in \cite{heinrich2018}. In the discussion that follows, all convergence rates are stated up to logarithmic factors, which are suppressed for notational simplicity.\\

\noindent
\emph{(i) Local rates.} 
A few remarks regarding this rate are in order. Recall that the loss \(\mathsf{VPOT}_{G_0,\delta}\) is defined using extended Voronoi cells $\mathcal{V}^{i,j}_{G_0,\delta}$. Furthermore, when \(\delta\) is sufficiently small, each extended cell differs from its original Voronoi cell $\mathcal{V}_{G_0}^i$ only by a small neighborhood of the boundary. We therefore use the numbers of atoms in the original Voronoi cells as natural representatives of the corresponding counts in the extended-cell construction. Accordingly, the rates below should be understood as representative local rates. 
\begin{itemize}
    \item  The above local rate is determined by the number of components competing within Voronoi cells with respect to $G_0$ rather than by the number of excess components $d_0=k-k_0$ as in \cite{heinrich2018}. As additional atoms from either
$\widehat G_n$ or $G_*$ concentrate in the same Voronoi cell $\mathcal{V}_{G_0}^i$, the exponent
$r_i$ increases and the local rate becomes slower, reflecting the
higher-order cancellations among nearby mixture components. 
    \item If a Voronoi cell $\mathcal{V}_{G_0}^i$ contains exactly one atom from
$\widehat G_n$ and one atom from $G_*$, then $s_i=t_i=1$ and $r_i=1$,
so the mixture components $\gamma^*_j\in\mathcal{V}_{G_0}^i(G_*)$ enjoy the standard parametric estimation rate $n^{-1/2}$. 
    \item At the other
extreme, the worst possible local configuration occurs when all excess atoms of both mixing measures concentrate around the same
support point of $G_0$, that is,
\[
    s_i=t_i=d_0+1,
    \qquad\text{and hence}\qquad
    r_i=2d_0+1.
\]
The resulting local rate in this case is given by
\[
    n^{-1/\{2(2d_0+1)\}}
    =
    n^{-1/(4d_0+2)},
\]
which recovers the local convergence rate of parameter estimation from \cite{heinrich2018}.
Thus, this worst-case rate arises only from the most singular local
configuration: whenever $s_i+t_i-1<2d_0+1$, the local rates turn out to be strictly faster. 
This illustrates how the
VPOT loss characterizes the heterogeneous convergence behavior of parameter estimation that cannot be captured by the Wasserstein distance used in \cite{heinrich2018}.
    \item We illustrate the above two extreme scenarios in Figure~\ref{fig:local_config}(a). In particular, we set $k_0=5$, which means that there are a total of 5 Voronoi cells generated by 5 atoms of $G_0$. In addition, we also assume that each of $\widehat{G}_n$ and $G_*$ has $k=10$ atoms. It can be seen that each of the four cells $\mathcal{V}_{G_0}^{i}$, for $i\in[2,5]$, has exactly one atom of $\widehat{G}_n$ and another one of $G_*$. Thus, the local convergence rates of estimating true parameters $\gamma^*_j$ in these cells are of parametric order $n^{-1/2}$. Meanwhile, the remaining six atoms of these measures lie in the cell $\mathcal{V}_{G_0}^{1}$. Consequently, true parameters $\gamma^*_j$ in this cell admit significantly slower estimation rates of order $n^{-1/2(6+6-1)}=n^{-1/22}$.
    \item Suppose that $k$ is divisible by $k_0$ and the atoms of $\widehat{G}_n$ and $G_*$ are uniformly distributed to Voronoi cells $\mathcal{V}_{G_0}^i$, that is, $s_i=t_i=k/k_0$, for all $i\in[k_0]$. Then, the local convergence rates of estimating true parameters become homogeneous, standing at the order of $n^{-1/(4k/k_0-2)}$. For example, in Figure~\ref{fig:local_config}(b) where we set $k_0=5$ and $k=10$, each Voronoi cell $\mathcal{V}_{G_0}^i$ has exactly two atoms of $\widehat{G}_n$ and two atoms of $G_*$. Thus, the estimation rates for true parameters $\gamma^*_j$ in these cells have the same order of $n^{-1/2(2+2-1)}=n^{-1/6}$.
\end{itemize}

\begin{figure*}[ht]
\centering
\includegraphics[width=\textwidth]{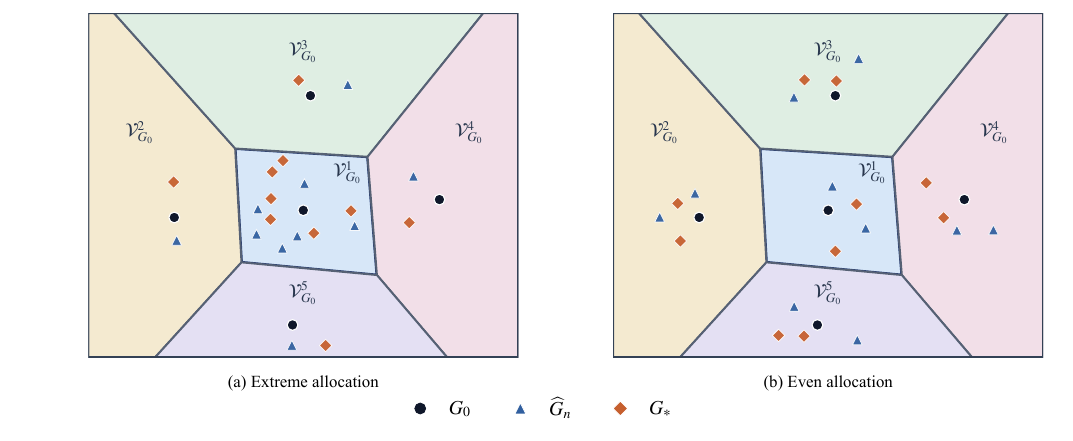}
\caption{Illustration of even and extreme allocations of the atoms of $\widehat{G}_n$ and $G_*$ among the Voronoi cells generated by $G_0$, with $k_0=5$ and $k=10$. Black circles mark the atoms of $G_0$ that generate the Voronoi cells, whereas blue triangles and orange diamonds mark the atoms of $\widehat{G}_n$ and $G_*$, respectively.
\textit{(a) Extreme allocation:} each of the four cells
$\mathcal{V}_{G_0}^{i}$, for $i\in[2,5]$, contains one atom of
$\widehat{G}_n$ and one atom of $G_*$, while the remaining six atoms of each
measure lie in $\mathcal{V}_{G_0}^{1}$. \textit{(b) Even allocation:} each cell contains two atoms of $\widehat{G}_n$ and two atoms of $G_*$.
}
\label{fig:local_config}
\end{figure*}
\noindent
\emph{(ii) Global rates.} The global bound in equation~\eqref{eqn:main_result_global_upper_bound}
admits a similar interpretation, with an important distinction from the
local bound. Rather than
restricting the true mixing measure $G_*$ to lie in a sufficiently small
neighborhood of $G_0$, the global result holds over the entire
space $\mathcal{G}_{\leq k}(\Gamma)$. Accordingly, the Voronoi geometry
underlying the loss is now generated adaptively by $G_*$ itself. More
specifically, for each support point $\gamma_i^*$ of $G_*$ and a
corresponding Voronoi cell $\mathcal{V}_{G_*}^{i}$, let
\[
    s_{i}
    :=
    \big|
    \mathcal{S}(\widehat G_n)\cap\mathcal{V}_{G_*}^{i}
    \big|,
    \qquad
    t_{i}
    :=
    \big|
    \mathcal{S}(G_*)\cap\mathcal{V}_{G_*}^{i}
    \big|, \qquad  r_{i}:=s_{i}+t_{i}-1.
\]
Then, again arguing heuristically, the global
bound~\eqref{eqn:main_result_global_upper_bound} along with the
construction of $\mathsf{VPOT}_{G_*}(\widehat{G}_n,G_*)$ yield the
global estimation rate of order
\[
  n^{-1/(2r_{i})}
  =
  n^{-1/[2(s_{i}+t_{i}-1)]},
\]
for components associated with this neighborhood, provided that the
corresponding transported masses are nonvanishing.
Hence, the global result continues to exhibit heterogeneous convergence
rates: components belonging to less singular neighborhoods are estimated
faster, whereas components involved in larger local clusters converge more
slowly. \\

\noindent
Finally, since both $G_*$ and $\widehat G_n$ contain at most $k$ support
points, we always have $r_{i}\leq 2k-1$.
Consequently, the worst-case global convergence rate of parameter estimation is given by
\[
    n^{-1/[2(2k-1)]}
    =
    n^{-1/(4k-2)},
\]
which occurs in the most singular configuration where $G_*$ has $k_*=k$ atoms and all these $k$ atoms are arbitrarily close or completely overlap. This observation agrees with the optimal convergence rate of parameter estimation in finite Gaussian mixtures derived in previous work \cite{wu2020a,doss_optimal_2023,nguyen2026geometry}. At the other extreme, isolated one-to-one component matching yields the parametric rate of order
$n^{-1/2}$. Thus, the global VPOT bound continuously interpolates between
these two extremes and provides a configuration-adaptive characterization
of the convergence rates of individual mixture parameters over the full
parameter space.

\subsection{Minimax Lower Bound}
\label{sec:minimax_lower_bound}




In this subsection, we establish a minimax lower bound to complement the upper bounds derived in the previous subsection and thereby characterize the optimality of the proposed VPOT-based rates. Our argument is based on constructing a least favorable sequence of local perturbations of a reference mixing measure $G_0$ within a shrinking $W_1$-neighborhood and showing that these alternatives remain statistically difficult to distinguish. To carry out this construction, we require sufficient smoothness and integrability of the component densities along a suitable direction in the parameter space, which allow us to control the corresponding likelihood expansions and the distance between the induced statistical experiments. These regularity requirements are summarized in Assumption~$D(p)$ below. \\

\noindent
\textbf{Assumption $D(p)$.}  The family of densities $\{f(\cdot\mid \gamma),\gamma \in \Gamma\}$ is said to satisfy the \emph{Assumption $D(p)$} if this family satisfies the following condition 
    
    \begin{enumerate}
        \item $D_{\gamma}^{\boldsymbol{\alpha}}f$ exists for every multi-index  $\boldsymbol{\alpha}$ such that $|\boldsymbol{\alpha}|\leq p$.
        \item There exists a unit vector $\upsilon \in \mathbb{R}^d$ such that
    \begin{equation*}
        \mathcal{S}(\gamma,t_1,t_2) := \max_{|\boldsymbol{\alpha}|=p}\int_{\mathbb{R}^{\bar{d}}}\left|\frac{D^{\boldsymbol{\alpha}}_{\gamma}f(x \mid \gamma + t_1\upsilon)}{f(x\mid \gamma + t_2\upsilon)}\right|^{m}f(x\mid \gamma)\,dx 
    \end{equation*}
    is a well-defined continuous function in $\{(\gamma,t_1,t_2) \in \Gamma \times \mathbb{R} \times\mathbb{R}:(\gamma,\gamma + t_1\upsilon,\gamma + t_2\upsilon) \in \Gamma^3\}$ for $|\boldsymbol{\alpha}| \leq p$ and $m \in [1,4]$. In addition, there exists an $\varepsilon > 0$ such that for $|t_1-t_2| < \varepsilon$, $\mathcal{S}(\gamma,t_1,t_2) < \infty$ for all $\gamma$.
    \item  There exists some point $\gamma_0 \in \Gamma^{\circ}$ such that for all multi-index $|\boldsymbol{\alpha}| \leq p-1$, we have 
    \begin{equation*}
        \int |D^{\boldsymbol{\alpha}}_{\gamma}f(x,\gamma_0)|\,dx < \infty. 
    \end{equation*}
        \end{enumerate}
Assumption~$D(p)$ is tailored to the minimax lower-bound argument. Its role is to ensure that one can perturb a component of the mixing measure along a fixed direction in parameter space while keeping the resulting statistical models sufficiently close. The differentiability requirement in the first condition makes it possible to construct perturbations whose lower-order effects cancel, so that the separation between the corresponding mixture distributions only appears at a higher order. The second condition provides the moment bounds and continuity needed to control the likelihood ratios generated by these perturbations; in particular, it guarantees that the associated local experiments remain well behaved as the perturbation size vanishes. The final integrability condition at an interior point $\gamma_0$ ensures that the required derivatives can be integrated and that the perturbation construction can be carried out around a valid parameter value inside $\Gamma$. To illustrate the applicability of Assumption~$D(p)$, we present two examples for the location family of probability distributions $\{f(x\mid \gamma) = f(x-\gamma):\gamma \in \Gamma\}$, where $\Gamma$ is regular\footnote{A subset $S$ of Euclidean space $\mathbb{R}^d$ is called a regular closed set if $S$ equals the closure of its interior \cite{willard1970general}.} compact subset of $\mathbb{R}^d$. 
\begin{example}
    \begin{enumerate}
    \item \textbf{Multivariate Gaussian location family.} Consider the multivariate Gaussian location family as in Part 1 of Example \ref{example:dung_upper_bound}. For every multi-index $\boldsymbol{\alpha}$, there exists a polynomial
    $P_{\boldsymbol{\alpha}}$ such that
    $D_\gamma^{\boldsymbol{\alpha}} f(x\mid\gamma) = P_{\boldsymbol{\alpha}}(x-\gamma)f(x\mid\gamma)$.
    Therefore, for any unit vector $\upsilon\in\mathbb{R}^d$, and $t_1,t_2 \in \mathbb{R}$,
    \[
        \frac{
            D_\gamma^{\boldsymbol{\alpha}} f(x\mid\gamma+t_1\upsilon)
        }{
            f(x\mid\gamma+t_2\upsilon)
        }
        =
        P_{\boldsymbol{\alpha}}(x-\gamma-t_1\upsilon)
        \frac{
            f(x\mid\gamma+t_1\upsilon)
        }{
            f(x\mid\gamma+t_2\upsilon)
        }.
    \]
    Since the two Gaussian densities have the same covariance matrix,
    their likelihood ratio is the exponential of an affine function of
    $x$. Consequently, for every $m\in[1,4]$,
    \[
        \int_{\mathbb{R}^d}
        \left|
            \frac{
                D_\gamma^{\boldsymbol{\alpha}} f(x\mid\gamma+t_1\upsilon)
            }{
                f(x\mid\gamma+t_2\upsilon)
            }
        \right|^m
        f(x\mid\gamma)\,\mathrm{d}x
        <\infty.
    \]
    This integral depends continuously on $(\gamma,t_1,t_2)$. Moreover,
    \[
        \int_{\mathbb{R}^d}
        \left|D_\gamma^{\boldsymbol{\alpha}} f(x\mid\gamma_0)\right|
        \,\mathrm{d}x<\infty
    \]
    for every $\gamma_0\in\mathbb{R}^d$ and every multi-index $\boldsymbol{\alpha}$.
    Hence, the family of location Gaussian distributions satisfies Assumption $D(p)$, for
    every finite $p$.

    \item \textbf{Elliptical multivariate Student-$t$ location family.} Consider the elliptical multivariate Student-t location family as in Part 2 of Example \ref{example:dung_upper_bound}.  
    For every multi-index $\boldsymbol{\alpha}$, there exists a bounded rational function $R_{\boldsymbol{\alpha}}$ such that $D_\gamma^{\boldsymbol{\alpha}} f(x\mid\gamma)
        =
        R_{\boldsymbol{\alpha}}(x-\gamma)f(x\mid\gamma)$. Furthermore, for
    every fixed $a,b\in\mathbb{R}^d$, assume that
        $\sup_{x\in\mathbb{R}^d}
        \frac{f(x\mid a)}{f(x\mid b)}
        <\infty$.
    Then, it follows that for any unit vector $\upsilon\in\mathbb{R}^d$ and $t_1,t_2\in \mathbb{R}$
    \[
        \sup_{x\in\mathbb{R}^d}
        \left|
            \frac{
                D_\gamma^{\boldsymbol{\alpha}} f(x\mid\gamma+t_1\upsilon)
            }{
                f(x\mid\gamma+t_2\upsilon)
            }
        \right|
        <\infty.
    \]
    Therefore, for every $m\in[1,4]$,
    \[
        \int_{\mathbb{R}^d}
        \left|
            \frac{
                D_\gamma^{\boldsymbol{\alpha}} f(x\mid\gamma+t_1\upsilon)
            }{
                f(x\mid\gamma+t_2\upsilon)
            }
        \right|^m
        f(x\mid\gamma)\,\mathrm{d}x
        <\infty.
    \]
    The integral is continuous in $(\gamma,t_1,t_2)$. In addition, the
    derivatives satisfy
    \[
        \left|D_\gamma^{\boldsymbol{\alpha}} f(x\mid\gamma)\right|
        \leq
        C_{\boldsymbol{\alpha}}
        \left(1+\|x-\gamma\|\right)^{-(\nu+d+|\boldsymbol{\alpha}|)},
    \]
    and hence,
    \[
        \int_{\mathbb{R}^d}
        \left|D_\gamma^{\boldsymbol{\alpha}} f(x\mid\gamma_0)\right|
        \,\mathrm{d}x<\infty.
    \]
    Thus, the multivariate Student-$t$ location family satisfies
    Assumption $D(p)$, for every $\nu>0$ and every finite $p$. In
    particular, this includes the multivariate Cauchy distribution associated with $\nu=1$.
\end{enumerate}
\end{example}
Overall, Assumption~$D(p)$ guarantees the existence of statistically indistinguishable local alternatives with a prescribed separation in parameter space, which is the key ingredient for proving that no estimator can converge uniformly faster than the rate stated in Theorem~\ref{thm:lower_bound}.

\begin{theorem}
    \label{thm:lower_bound}
    Let $\varepsilon_n = n^{-1/(4d_0+2) +\kappa}$, for some $0<\kappa < \frac{1}{4d_0+2}$, and $0<\delta <\frac{1}{2} \min_{i\neq j}\|\gamma_{0i}-\gamma_{0j}\|$, where $(\gamma_{0i})_{i=1}^{k_0}$ denote the atoms of a known mixing measure $G_0\in\mathcal{G}_{k_0}(\Gamma)$. Under Assumption $D(2d_0+2)$, the following statement holds for any sequence of estimators $\check{G}_n \in \mathcal{G}_{\leq k}(\Gamma)$, 
    \begin{equation}
      \label{eqn:main_result_local_lower_bound}
        \sup_{\substack{G \in \mathcal{G}_{k}(\Gamma),\\ W_1(G,G_0)<\varepsilon_n}}\mathbb{E}_G\left[\mathsf{VPOT}_{G_0,\delta}(\check{G}_n,G)\right]\gtrsim n^{-1/2}.
    \end{equation}
\end{theorem}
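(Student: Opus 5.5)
We would prove \eqref{eqn:main_result_local_lower_bound} by Le Cam's two-point method. The plan is to construct, for each $n$, two mixing measures $G_{1,n},G_{2,n}\in\mathcal{G}_k(\Gamma)$ with four properties:
\begin{itemize}
\item[(a)] both lie in the $W_1$-ball of radius $\varepsilon_n$ around $G_0$;
\item[(b)] $\mathsf{VPOT}_{G_0,\delta}(G_{1,n},G_{2,n})\gtrsim n^{-1/2}$;
\item[(c)] $h^2(p_{G_{1,n}},p_{G_{2,n}})\lesssim 1/n$, so that the $n$-fold products stay at bounded total variation;
\item[(d)] $\mathsf{VPOT}_{G_0,\delta}$ obeys a weak triangle inequality on these pairs, i.e. $\mathsf{VPOT}(\check G,G_1)+\mathsf{VPOT}(\check G,G_2)\gtrsim \mathsf{VPOT}(G_1,G_2)$.
\end{itemize}
Le Cam's lemma then gives $\max_{\ell}\mathbb{E}_{G_\ell}[\mathsf{VPOT}(\check G_n,G_\ell)]\gtrsim n^{-1/2}(1-\mathrm{TV}(P^n_{G_1},P^n_{G_2}))\gtrsim n^{-1/2}$.

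\textbf{Construction.} We place all $d_0$ excess atoms in the cell of $\gamma_{01}$, taking $\gamma_{01}$ interior as allowed by part 3 of Assumption $D(2d_0+2)$. We keep the other $k_0-1$ atoms of $G_0$, with their weights, identical in $G_1$ and $G_2$. Inside cell $1$, using the unit direction $\upsilon$ from Assumption $D$, set
\[
G_\ell|_{\text{cell }1}=\sum_{m=1}^{d_0+1}\pi_{\ell m}\,\delta_{\gamma_{01}+t\,a_{\ell m}\upsilon},\qquad \ell=1,2.
\]
Here the nodes $a_{\ell m}$ are distinct, the weights are positive, the two node sets are disjoint, and the masses $\sum_m\pi_{\ell m}=\pi_{01}$ coincide. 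The key requirement is that the two discrete one-dimensional measures share their first $2d_0+1$ moments, $\sum_m\pi_{1m}a_{1m}^r=\sum_m\pi_{2m}a_{2m}^r$ for $r\le 2d_0+1$. Such pairs exist by a Gauss-quadrature-type argument: two $(d_0+1)$-point measures have $4d_0+4$ free parameters, and one can take the two Gauss–Radau/Lobatto-type quadratures of a common reference measure. We choose $t=c\,n^{-1/(4d_0+2)}$.

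Because $\kappa>0$, we get $W_1(G_\ell,G_0)\lesssim t<\varepsilon_n$ for large $n$, which is (a). Since $\delta$ is less than half the minimal separation, for small $t$ every extension $\mathcal{V}^{1,j}_{G_0,\delta}$ contains exactly these $2d_0+2$ atoms, so $r_{1,j}=2d_0+1$. In every other cell the restrictions coincide and contribute zero. Hence
\[
\mathsf{VPOT}_{G_0,\delta}(G_1,G_2)=W_1^{2d_0+1}\bigl(G_1|_{\text{cell }1},G_2|_{\text{cell }1}\bigr)\asymp t^{2d_0+1}\asymp n^{-1/2},
\]
which is (b). The weak triangle inequality (d) should follow from the triangle inequality for $\mathsf{POT}_1$ together with $(x+y)^r\le 2^{r-1}(x^r+y^r)$. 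The nuisance is that $r_{i,j}$ depends on the pair and on $\check G_n$. We would handle this by noting that for each estimator some $j$ makes the counts compatible, or by replacing the exponent by the maximal $2k-1$ where harmless. Alternatively, we would run Le Cam with the loss $\min_\ell$, i.e. the standard reduction via $\mathbb{1}\{\mathsf{VPOT}(\check G,G_1)<\tfrac12 s\}$.

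\textbf{Hellinger bound (main obstacle).} We must show $h^2(p_{G_1},p_{G_2})\lesssim t^{2(2d_0+2)}$. We plan to bound $h^2\le\chi^2$-type quantities: write
\[
h^2\le\int\frac{(p_{G_1}-p_{G_2})^2}{p_{G_1}}
\]
and Taylor-expand $f(x\mid\gamma_{01}+ta\upsilon)$ in $t$ along $\upsilon$ to order $2d_0+2$ with integral remainder. Moment matching kills all terms of order $\le 2d_0+1$, leaving $t^{2d_0+2}$ times a combination of $D^{\boldsymbol\alpha}_\gamma f(x\mid\gamma_{01}+s\upsilon)$ with $|\boldsymbol\alpha|=2d_0+2$. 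After Cauchy–Schwarz/Jensen on the remainder integral, its contribution to the weighted $L^2$ integral is controlled exactly by the quantities $\mathcal{S}(\gamma_{01},t_1,t_2)$ from part 2 of Assumption $D(2d_0+2)$ with $m=2$; the denominator $p_{G_1}$ is bounded below by $\pi_{11}f(x\mid\gamma_{01}+ta_{11}\upsilon)$. The needed uniformity in $t_1,t_2$ near $0$ is supplied by the continuity and finiteness of $\mathcal{S}$ for $|t_1-t_2|<\varepsilon$.

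This gives $n\,h^2\lesssim c^{4d_0+4}$, which is small for small $c$, so $\mathrm{TV}(P^n_{G_1},P^n_{G_2})\le\sqrt{2n}\,h$ stays bounded away from $1$ and the two-point bound follows. We expect this remainder control to be the most delicate step, because Assumption $D$ is stated with the weight $f(x\mid\gamma)$ rather than the mixture density. We would first try to reduce to it via the likelihood-ratio moments with $m\in[1,4]$, using $m=4$ and Hölder to swap weights.
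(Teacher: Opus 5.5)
Your plan of running Le Cam's two-point method directly on $\mathsf{VPOT}_{G_0,\delta}$ can be made to work. As written, however, it contains one false step and leaves the decisive step unproved.

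\emph{The moment-matched pair you describe does not exist.} Two distinct measures on $\mathbb{R}$ with at most $d_0+1$ atoms each cannot agree on all moments of orders $0,\dots,2d_0+1$. Their difference is supported on at most $2d_0+2$ points, so the Lagrange polynomial of degree $2d_0+1$ equal to $1$ at $a_{11}$ and $0$ at the other nodes integrates to $\pi_{11}\neq 0$ against $\mu_1-\mu_2$. The count of $4d_0+4$ free parameters is misleading here. Two Gauss--Radau rules with $d_0+1$ nodes agree only up to degree $2d_0$.

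Consequently the first non-cancelling term of $p_{G_1}-p_{G_2}$ has order $t^{2d_0+1}$, and the best available bound is $h^2\lesssim t^{4d_0+2}$, not $t^{4d_0+4}$. The bookkeeping still closes after this correction. With $t=c\,n^{-1/(4d_0+2)}$ you get $n h^2\lesssim c^{4d_0+2}$ and $\mathsf{VPOT}_{G_0,\delta}(G_1,G_2)\asymp c^{2d_0+1}n^{-1/2}$. The $\chi^2$ estimate must now also control derivatives of order $2d_0+1$. Your worry about the weight is unnecessary: $\int (D^{\boldsymbol\alpha}_\gamma f(x\mid\gamma'+s\upsilon))^2/f(x\mid\gamma')\,dx$ is bounded by $\mathcal{S}(\gamma',s,0)$ with $m=2$ and base point $\gamma'=\gamma_{01}+ta_{11}\upsilon$. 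No H\"older swap is needed.

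\emph{The quasi-triangle inequality (d) is the real missing idea, and your suggested work-arounds do not supply it.} The exponent $r_{1,j}$ depends on how many atoms $\check G_n$ places in $\mathcal{V}^{1,j}_{G_0,\delta}$, so it can be as large as $k+d_0$. In that case $\mathsf{POT}_1^{r_{1,j}}$ can be far smaller than $\mathsf{POT}_1^{2d_0+1}$.
\begin{itemize}
\item Replacing the exponent by $2k-1$ is not harmless: it only yields $t^{2k-1}\ll n^{-1/2}$ whenever $k_0\geq 2$.
\item The indicator reduction via $\mathbf{1}\{\mathsf{VPOT}(\check G,G_1)<s/2\}$ needs exactly (d), so it does not help.
\end{itemize}

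What you need is a counting argument, in three steps.
\begin{itemize}
\item Suppose the loss of $\check G$ in every cell $i\neq 1$ is below a small constant. A small $\mathsf{POT}_1$ to $\pi_{0i}\delta_{\gamma_{0i}}$ forces mass near $\gamma_{0i}$, so $\check G$ has an atom within a small distance $\rho$ of each $\gamma_{0i}$, $i\neq1$.
\item Since $\delta<\frac12\min_{i\neq j}\|\gamma_{0i}-\gamma_{0j}\|$, these atoms lie outside every extension of cell $1$. Hence $\check G$ has at most $d_0+1$ atoms there and $r_{1,j}\le 2d_0+1$. If instead some cell $i\neq 1$ has loss above the constant, the loss is $\gtrsim 1\gg t^{2d_0+1}$.
\item The two infima over $j$ may select different extensions. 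You must compare the corresponding restrictions of $\check G$: the extra atoms lie outside $\mathcal{V}^{1}_{G_0}$, at distance bounded below from the atoms of $G_1,G_2$. You then use that $\mathsf{POT}_1$ obeys a triangle inequality up to constants on bounded $\Gamma$.
\end{itemize}

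This is the content the paper isolates in Lemma~\ref{lemma:partial_OT_dominate_wasserstein}: $\mathsf{VPOT}_{G_0,\delta}(G,G')\gtrsim W_1^{2d_0+1}(G,G')$ for $G$ near $G_0$ and arbitrary $G'$. The paper then runs Le Cam only for the genuine metric $W_1$, using the Heinrich--Kahn LAN family, and passes to $W_1^{2d_0+1}$ by Jensen. Once you prove (d), your route is a legitimate and more self-contained alternative: an explicit moment-matched pair with a direct $\chi^2$ bound replaces the imported LAN machinery.
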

\noindent
The proof of Theorem~\ref{thm:lower_bound} is in Appendix~\ref{app:theorem_2_proof}. Theorem~\ref{thm:lower_bound} complements the upper bounds in Theorem~\ref{thm:upper_bound} by showing that the MLE convergence rate identified there is minimax optimal. More precisely, the upper and lower bounds match at the order of $n^{-1/2}$ under the VPOT loss, establishing the minimax optimality of estimating the mixing measure $G_*$ as a whole with respect to this loss. 
This minimax lower bound, however, does not directly establish the optimality of the convergence rates for individual mixture parameters. The VPOT loss is defined as a sum of local powered POT discrepancies over the Voronoi cells, so a lower bound on the total VPOT loss does not imply a lower bound on each individual POT term. 
Therefore, while the upper bound admits a heuristic interpretation in terms of heterogeneous convergence rates for individual parameters, establishing their minimax optimality would require separate lower bounds for the corresponding local POT discrepancies. Establishing separate minimax lower bounds for the local POT discrepancies is considerably more challenging, because these terms depend on the random local configuration of fitted atoms and on the extended Voronoi construction used in the VPOT loss. A sharp component-wise lower bound would therefore require a more delicate localization argument that isolates each local discrepancy separately. Since this problem lies beyond the scope of our work, we leave it for future development.

\section{Proof of Theorem~\ref{thm:upper_bound}}
\label{sec:proof_upper_bound}
In this section, we present the proof of Theorem~\ref{thm:upper_bound} with Proposition~\ref{prop:L_1_distance_greater_than_POT} established as an intermediate result.
Throughout the proof, whenever a pair of sequences $(G_n)$ and $(G_n')$ are considered, $\mathcal{T}$ denotes the coarse-graining tree associated with the signed measure $G_n - G_n'$, as introduced in Appendix \ref{app:coarse_graining}. We write $J_r$ for the root of $\mathcal{T}$, $J^\uparrow$ for the parent of a non-root node $J$, $\operatorname{Child}(J)$ and $\operatorname{Desc}(J)$ for its children and descendants, respectively, $\varepsilon_J$ for its scale, and $\bar{\pi}_J$ for its total signed weight. \\

\noindent
The proof has three steps. The first two establish the local and global bounds relating VPOT to the $L_1$ distance between mixture densities, and the last applies these bounds to the MLE.

\begin{itemize}
    \item \textbf{Step 1}: We fix a mixing measure $G_0$ and prove by contradiction that $\mathsf{VPOT}_{G_0,\delta}(G,G') \lesssim \|f_G-f_{G'}\|_1$ for all $G$ and $G'$ in a sufficiently small neighborhood of $G_0$. We first reduce the claim to a bound in terms of the sup-norm difference between the mixture CDFs. For a pair of counterexample sequences, we then construct the coarse-graining tree and apply the coarse-graining expansion at the root when $\varepsilon_{J_r}\to 0$ and at the children of the root otherwise. Proposition \ref{prop:POT_based_on_tree} and the extended Voronoi cells bound VPOT from above, whereas Proposition \ref{lemma:multi_dimension_taylor_expansion} and strong identifiability bound the CDF difference from below. The two bounds imply that this ratio is bounded below by a positive constant. This contradicts the assumption that it converges to zero along the counterexample sequence.
    \item \textbf{Step 2}: We next establish the global bound $\mathsf{VPOT}_{G,\delta}(G,G')\lesssim\|f_G-f_{G'}\|_1$ over the entire class $\mathcal{G}_{\leq k}(\Gamma)$, where the first measure $G$ also serves as the reference measure used to construct the Voronoi cells.  If such a bound fails, there exist counterexample sequences $(G_n,G_n')$, and compactness provides a common subsequence along which both converge. Their density difference along this subsequence tends to zero, so identifiability forces their limits to be the same mixing measure. We then repeat the tree argument from the local case, with the reference measure and its Voronoi cells now varying with $n$, and show that the density-to-VPOT ratio is bounded away from zero, contradicting the counterexample sequence.
    \item \textbf{Step 3}: We apply these bounds to the MLE. The global result follows directly by applying the bound from Step~2 to $\widehat{G}_n(k)$ and the true mixing measure. In the local setting, the bound from Step 1 does not immediately apply to the MLE because the estimator is not necessarily close to $G_0$. Nevertheless, we show that the density-to-VPOT bound remains valid when the true mixing measure is close to $G_0$, even if the estimator is not. Consequently, the local and global bounds control the VPOT estimation error of $\widehat{G}_n(k)$ by its $L_1$ density error. The density-estimation result in Proposition \ref{prop:MLE_estimation} then yields the stated bounds for the MLE.
\end{itemize}
Now, we proceed to streamline the proof of Proposition~\ref{prop:L_1_distance_greater_than_POT} and use its result to complete the proof of Theorem~\ref{thm:upper_bound}.
\begin{proof}[Proof of Proposition~\ref{prop:L_1_distance_greater_than_POT}]
We adapt the localization and coarse-graining strategy of Heinrich and Kahn \cite[Theorem 6.3]{heinrich2018} to the multidimensional setting and to the partial optimal transport discrepancy $\mathsf{VPOT}_{G_0,\delta}$. The principal additional ingredient is a decomposition of the transport problem over suitably chosen Voronoi cells. Because these modifications require several nontrivial arguments, we provide the complete proof. \\

\noindent
Suppose that $G$ and $G'$ are finite measures supported on a compact set $\Gamma$ with $0\leq m(G),m(G')\leq 1$. For every $r\leq 2k$, the definition of partial optimal transport implies that
\begin{equation*}
    \mathsf{POT}^{r}_{r}(G,G') \gtrsim \mathsf{POT}^{r}_{1}(G,G'),
\end{equation*}
where the implicit constant is uniform over \(r\leq 2k\). Applying this inequality to the restrictions of \(G\) and \(G'\) on each extended Voronoi cell yields
\begin{align*}
    \overline{\mathsf{VPOT}}_{G_0,\delta}(G,G') &:= \sum_{i=1}^{k_0}\inf_{1\leq j\leq 2k}\mathsf{POT}_{r_{i,j}}^{r_{i,j}}\left(G|_{\mathcal{V}^{i,j}_{G_0,\delta}},G'|_{\mathcal{V}^{i,j}_{G_0,\delta}}\right) \\
    &\gtrsim  \sum_{i = 1}^{k_0}\inf_{1\leq j\leq 2k} \mathsf{POT}^{r_{i,j}}_{1}\left(G|_{\mathcal{V}^{i,j}_{G_0,\delta}},G'|_{\mathcal{V}^{i,j}_{G_0,\delta}}\right) =  \mathsf{VPOT}_{G_0,\delta}(G,G').
\end{align*}


Thus, we can prove stronger results for local regime 
\begin{equation}
\label{eqn_new:L_1_distance_greater_than_D_locally}
        \inf_{G,G_* \in \mathcal{B}_{W_1}(G_0,\varepsilon)} \|p_G-p_{G_*}\|_1/  \overline{\mathsf{VPOT}}_{G_0,\delta}(G,G_*) > 0.
    \end{equation}
and for global regime
\begin{equation}
    \label{eqn_new:L_1_distance_greater_than_D_globally}
    \inf_{G,G'\in \mathcal{G}_{\leq k}(\Gamma)}\|p_G-p_{G'}\|_1/ \overline{\mathsf{VPOT}}_{G,\delta}(G,G') > 0. 
\end{equation}
\textit{(i) Local part in equation~\eqref{eqn_new:L_1_distance_greater_than_D_locally}.} 
Let $f_1$ and $f_2$ be two density functions in $\mathbb{R}^{\bar{d}}$ with corresponding cumulative distribution functions $F_1$ and $F_2$. For every $x \in \mathbb{R}^d$, 
\begin{align*}
    |F_1(x)-F_2(x)| &= \left|\int_{\xi \preceq x}(f_1(\xi )-f_2(\xi ))d\xi \right| \leq \int_{\xi \preceq x}|f_1(\xi )-f_2(\xi )|d\xi\leq \|f_1-f_2\|_1.
\end{align*}
Taking supremum over $x \in \mathbb{R}^{\bar{d}}$ gives $\|F_1-F_2\|_\infty \leq \|f_1-f_2\|_1$. Thus, it suffices to prove that
\begin{equation}
    \label{eqn:L_1_distance_greater_than_D_locally_l_infty}
    \inf_{G,G' \in \mathcal{B}_{W_1}(G_0,\varepsilon)} \|F(\cdot\mid G)-F(\cdot \mid G')\|_{\infty} \gtrsim \overline{\mathsf{VPOT}}_{G_0,\delta}(G,G').
\end{equation}
Arguing by contradiction, suppose that there exist  sequences $(G_n)$ and $(G'_n)$ such that 
\begin{equation*}
    \begin{cases}
        \lim_{n\to \infty} \|F(\cdot\mid G_n)-F(\cdot\mid G'_n)\|_\infty/\overline{\mathsf{VPOT}}_{G_0,\delta}(G_n,G_n') = 0\\
       W_1(G_n,G_0), W_1(G'_n,G_0) \to 0. 
    \end{cases}
\end{equation*}
Without loss of generality, we will assume that $G_n$ and $G_n'$ satisfy the condition in Lemma \ref{lemma:nice_subsequence}. Then, we can construct a tree $\mathcal{T}$ with root $J_r$ and for each node $J \in \mathcal{T}$, we choose a base point $\oupgamma_J$.  \\

\noindent
\textbf{Case 1: } $\varepsilon_{J_r} \to 0$. Then, all the points converge to a single support point of $G_0$, i.e. $k_0 = 1$. In this case, the unique Voronoi cell is all space $\Gamma$ itself, which contains all the points of $G_n$ and $G_n'$.  \\

\noindent
Applying the coarse-graining expansion of $F$ at root $J:=J_r$ up to order $2k$, we have 
\begin{equation}
    \label{eqn:taylor_expansion_to_root}
    F(x\mid J) = \sum_{0 \leq |\boldsymbol{p}|\leq 2k}c(\boldsymbol{p}|J,\oupgamma_J)\varepsilon_J^{|\boldsymbol{p}|}D_{\gamma}^{\boldsymbol{p}}F(x\mid\gamma_J) + R(x\mid J). 
\end{equation}
Using the triangle inequality, identifiability and Proposition \ref{lemma:multi_dimension_taylor_expansion}, we have 
\begin{align}
\label{eqn:case_1_F_distance}
    \|F(\cdot\mid J)\|_{\infty} &\gtrsim \max_{0 \leq |\boldsymbol{p}| \leq 2k}\left|c(\boldsymbol{p}|J,\oupgamma_J)\varepsilon_J^{|\boldsymbol{p}|}\right| - o(\|c_J\|\varepsilon_J^{2k}) \gtrsim \max_{0\leq |\boldsymbol{p}|\leq 2k}\left|c(\boldsymbol{p}|J,\oupgamma_J)\varepsilon_J^{|\boldsymbol{p}|}\right|\nonumber \\
    &\overset{(i)}{\gtrsim} \|c_J\|\varepsilon^{|J|-1}_{J} \overset{(ii)}{\gtrsim}\max_{K \in \mathrm{Desc}(J)}|\ouppi_{K}|\varepsilon^{|J|-1}_{K^{\uparrow}},
\end{align}
where $(i)$ follows the bound \eqref{eqn:max_c_first_dominate_norm}, and $(ii)$ from equation~\eqref{eqn:dung_prop_5_c_J_norm} of Proposition \ref{lemma:multi_dimension_taylor_expansion}.  \\

\noindent
On the other hand, the discrepancy $\mathsf{VPOT}_{G_0,\delta}(G_n,G_n')$ becomes the Wasserstein distance between $G_n$ and $G_n'$. Then, by Proposition \ref{prop:POT_based_on_tree}, we have 
\begin{equation}
\label{eqn:case_1_wasserstein_distance}
    \overline{\mathsf{VPOT}}_{G_0,\delta}(G_n,G_n') \asymp \max_{J \in \textrm{Desc}(J_r)}|\ouppi_J|\varepsilon^{|J|-1}_{J^{\uparrow}}.
\end{equation}
Combining equations~\eqref{eqn:case_1_F_distance} and~\eqref{eqn:case_1_wasserstein_distance}, we obtain that $\|F(x\mid J_r)\|_{\infty}/\overline{\mathsf{VPOT}}_{G_0,\delta}(G_n,G_n')$ cannot go to 0, which is a contradiction. We achieve our result for this case.  \\

\noindent
\textbf{Case 2: } $\varepsilon_{J_r} \not \to 0$. It means that there is more than one cluster limits for $G_n-G'_n$, which happens when $k_0 > 1$ or $k_0=1$ and there is only one convergence with non-zero total mass, while the other points have asymptotically vanishing mass.  \\

\noindent
\textit{Expansion for Kolmogorov's distance: }
Instead of implementing coarse-graining expansion in Proposition \ref{lemma:multi_dimension_taylor_expansion} at the root $J_r$, we apply the coarse-graining expansion for each child $J$ of $J_r$ at order $2k$ as 
\begin{equation}
\begin{aligned}
F(x\mid J_r)
&= \sum_{J\in\mathrm{Child}(J_r)} F(x\mid J)
\\
&= \sum_{J\in\mathrm{Child}(J_r)}
   \sum_{0\leq |\boldsymbol{p}|\leq 2k}
   c(\boldsymbol{p}\mid J,\oupgamma_J)
   \varepsilon_J^{|\boldsymbol{p}|}
   D_{\gamma}^{\boldsymbol{p}}F(x\mid\oupgamma_J)
   + \sum_{J\in\mathrm{Child}(J_r)} R(x\mid J),
\end{aligned}
\end{equation}
Using $2k$-identifiability and estimation of remainder $R(x\mid J)$ in Proposition \ref{lemma:multi_dimension_taylor_expansion} for $J$, we have
\begin{align*}
\|F(\cdot \mid J_r)\|_{\infty}
&\gtrsim
\max_{J\in\mathrm{Child}(J_r)}
\max_{0\leq |\boldsymbol{p}|\leq 2k}
|c(\boldsymbol{p}|J,\oupgamma_J)\varepsilon^{|\boldsymbol{p}|}_{J}|
-
\max_{J\in\mathrm{Child}(J_r)}
o(\|c_J\|\varepsilon_J^{2k})
\\
&\overset{(i)}{\gtrsim}
\max_{J\in\mathrm{Child}(J_r)}
\|c_J\|\varepsilon_J^{|J|-1}
\\
&\overset{(ii)}{\gtrsim}
\max_{J\in\mathrm{Child}(J_r)}
\max_{K\in\mathrm{Desc}(J)}
\left|\ouppi_K\right|
\varepsilon^{|J|-1}_{K^{\uparrow}}
\end{align*}
where $(i)$ is correct due to estimation \eqref{eqn:max_c_first_dominate_norm} node $J_r$, and $(ii)$ is due to estimation \eqref{eqn:dung_prop_5_c_J_norm} for each node $J \in \mathrm{Child}(J_r)$ of Proposition \ref{lemma:multi_dimension_taylor_expansion}. In addition, by plugging in $\boldsymbol{p} = \boldsymbol{0}$ in the above estimation, thanks to the assumption that $\varepsilon_{J_r} \asymp 1$, we have 
\begin{equation*}
    \|F(\cdot\mid J_r)\|_{\infty} \gtrsim \max_{J \in \mathrm{Child}(J_r)}|\ouppi_{J}|\varepsilon^{|J|-1}_{J^\uparrow}.
\end{equation*}
Thus, we have 
\begin{equation}
\label{eqn:f_estimation_based_on_tree}
    \|F(\cdot\mid G_n)-F(\cdot\mid G'_n)\|_\infty \gtrsim \max_{J \in\mathrm{Desc}(J_r)\\}|\ouppi_J|\varepsilon_{J^{\uparrow}}^{|J|-1}. 
\end{equation}
\textit{Upper bound for the POT discrepancy:}
Next, we estimate the $\overline{\mathsf{VPOT}}_{G_0,\delta}(G_1,G_2)$ based on the tree structure. 
\begin{observation}
\label{obs:n_large_contain_local}
    For $n$ sufficiently large, for any index $i$, for any node $J \in \mathrm{Desc}(J_r)$, there exists a Voronoi neighborhood $\mathcal{V}^{i,j_i}_{G_0,\delta}$ such that $J \cap \mathcal{V}^{i,j_i}_{G_0,\delta} = \emptyset$ or $J  \subset \mathcal{V}^{i,j_i}_{G_0,\delta}$. 
\end{observation}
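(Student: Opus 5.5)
The plan is to reuse the pigeonhole argument sketched after the definition of the $(\delta,j)$-extensions, now applied to the nodes of the coarse-graining tree $\mathcal{T}$ rather than to individual atoms. Recall that, by Lemma~\ref{lemma:nice_subsequence}, we have passed to a subsequence along which every atom of $G_n$ and $G_n'$ converges. Each node $J\in\mathrm{Desc}(J_r)$ is a fixed (in $n$) set of atom labels. We first record that every node $J\neq J_r$ is contained in some child $J'\in\mathrm{Child}(J_r)$, and that all atoms in a child $J'$ converge to a common limit point $\gamma_{J'}$. This is because the scale $\varepsilon_{J'}\to 0$ for children of the root: $J'$ is a cluster whose diameter vanishes relative to the root scale, which is of order one in Case~2. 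Hence it suffices to prove the claim for the children of the root, since $J\subset J'$ and $J'\subset \mathcal{V}$ or $J'\cap\mathcal{V}=\emptyset$ pass to $J$.

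Fix $i\in[k_0]$. The set $L:=\{\gamma_{J'}:J'\in\mathrm{Child}(J_r)\}$ of distinct limiting locations has at most $2k-1$ elements. The argument is the one given in the text: the combined support has at most $2k$ points, and $G_n$ and $G_n'$ share at least one limit point, since both converge in $W_1$ to $G_0$, which has nonempty support. The boundaries $\partial\mathcal{V}^{i,j}_{G_0,\delta}$, for $j\in[2k]$, are contained in the level sets $\{\gamma:\operatorname{dist}(\gamma,\mathcal{V}^i_{G_0})=j\delta/(2k)\}$, and these are pairwise disjoint because the radii are distinct. Each point of $L$ lies on at most one of them. By pigeonhole there is some $j_i\in[2k]$ such that no point of $L$ lies on $\partial\mathcal{V}^{i,j_i}_{G_0,\delta}$.

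Next we use that $\gamma\mapsto \operatorname{dist}(\gamma,\mathcal{V}^i_{G_0})$ is continuous (indeed $1$-Lipschitz). For each $\gamma_{J'}\in L$ we have either $\operatorname{dist}(\gamma_{J'},\mathcal{V}^i_{G_0})<j_i\delta/(2k)$ or $>j_i\delta/(2k)$. In either case there is a radius $\rho_{J'}>0$ such that the open ball $B(\gamma_{J'},\rho_{J'})$, intersected with $\Gamma$, lies entirely inside $\mathcal{V}^{i,j_i}_{G_0,\delta}$ or entirely in its complement. Let $\rho:=\min_{J'}\rho_{J'}>0$; this is a minimum over finitely many positive numbers. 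Since every atom in $J'$ converges to $\gamma_{J'}$ and there are finitely many atoms, for $n$ large all atoms of $J'$ lie in $B(\gamma_{J'},\rho)$. Thus $J'\subset\mathcal{V}^{i,j_i}_{G_0,\delta}$ or $J'\cap\mathcal{V}^{i,j_i}_{G_0,\delta}=\emptyset$. Taking the maximum of the finitely many thresholds over $i\in[k_0]$ and $J'$ gives one $n_0$ that works uniformly.

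The main obstacle is the first step: justifying from the construction of $\mathcal{T}$ in Appendix~\ref{app:coarse_graining} that each child of the root has a single limiting location. If the tree's children are instead defined via scales that do not vanish, I would replace children by the maximal nodes $J$ with $\varepsilon_J\to0$ and run the same argument. The pigeonhole count should then use the number of such clusters, which is still at most $2k-1$. If that count fails (for instance, if $G_0$ shares no limit in some degenerate vanishing-mass case), I would fall back on at most $2k$ locations. In that case a free boundary is not guaranteed, and one would need to treat clusters of asymptotically zero mass separately, noting that they contribute negligibly on either side.
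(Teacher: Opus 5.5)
Your proof is correct and follows essentially the paper's argument: a pigeonhole over the $2k$ pairwise-distinct extension radii gives an index $j_i$ whose boundary contains no limiting cluster location, and a small-ball (continuity) argument then shows that each cluster lies entirely inside or entirely outside $\mathcal{V}^{i,j_i}_{G_0,\delta}$ for large $n$. Two remarks on your write-up: your count of at most $2k-1$ limit points, using the limit shared through the support of $G_0$, is exactly what the pigeonhole step needs, whereas the paper's local write-up only states the bound $2k$, which would not suffice; and the hedging in your last paragraph is unnecessary, because in Case~2 the root has scale $\varepsilon_S\equiv 1$, so every child of the root has vanishing scale.
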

 In fact, fix $i$, as $\varepsilon_J \to 0$, all the points in the nodes $J\in J_r$ converge to a point called $\eta = \eta(J) \in \Gamma$. As there are at most $2k$ point $\eta$, and the boundaries $\{\partial\mathcal{V}^{i,j}_{G_0,\delta},\ 1\leq j \leq 2k\}$ are disjoint (with the topology induced in $\Gamma$ from $\mathbb{R}^d$), there exists at least one neighbor $\mathcal{V}^{i,j_i}_{G_0,\delta}$ such that its boundary does not contain any limit point $\eta$. For each limit point, consider two situations:
 \begin{itemize}
     \item If $\eta \in \mathrm{int}(\mathcal{V}^{i,j_i}_{G_0,\delta})$, then there exists a small ball centered at $\eta$ belonging to $\mathcal{V}^{i,j_i}_{G_0,\delta}$. Thus, when $n$ is sufficiently large, all the points in $J$ lie inside this ball and therefore inside this Voronoi neighborhood.

     \item If $\eta \not \in \overline{\mathcal{V}^{i,j_i}_{G_0,\delta}}$, then there exists a small ball centered at $\eta$ having empty intersection with $\mathcal{V}^{i,j_i}_{G_0,\delta}$. When $n$ is sufficiently large, all the points in $J$ lie inside this ball and therefore do not belong to this Voronoi neighborhood.
 \end{itemize}
Now, we prove that 
\begin{equation}
    \label{eqn:D_estimation_based_on_tree}
    \overline{\mathsf{VPOT}}_{G_0,\delta}(G_n,G'_n) \lesssim \max_{J \in\mathrm{Desc}(J_r)\\}|\ouppi_J|\varepsilon_{J^{\uparrow}}^{q_J-1} ,
\end{equation} 
where $q_J = \min_{1\leq i\leq k}\{|\mathcal{V}^{i,j_i}_{G_0,\delta}|:J\subset \mathcal{V}^{i,j_i}_{G_0,\delta}\}$. This result is based on the observation that 
\begin{align}
    \mathsf{POT}(G_n|_{\mathcal{V}^{i,j_i}_{G_0,\delta}},G_n
    '|_{\mathcal{V}^{i}_{G_0,\delta}}) &\asymp \max_{\substack{J\in \mathrm{Child}(J_r)\\J \subset\mathcal{V}^{i,j_i}_{G_0,\delta}}}\left(|\ouppi_{J}| \lor \max_{K \in \mathrm{Desc}(J)} |\ouppi_{K}|\varepsilon_{K^{\uparrow}}^{|\mathcal{V}^{i,j_i}_{G_0,\delta}|-1}\right)  \lor |\mathsf{m}_{n,i}-\mathsf{m}'_{n,i}|, \nonumber \\
    &\asymp \max_{\substack{J\in \mathrm{Child}(J_r)\\J \subset\mathcal{V}^{i,j_i}_{G_0,\delta}}}\left(|\ouppi_{J}| \lor \max_{K \in \mathrm{Desc}(J)} |\ouppi_{K}|\varepsilon_{K^{\uparrow}}^{|\mathcal{V}^{i,j_i}_{G_0,\delta}|-1}\right),
    \label{eqn:POT_estimation_based_on_tree}
\end{align}
where $\mathsf{m}_{n,i} = m(G_n|_{\mathcal{V}^{i,j_i}_{G_0,\delta}})$ and $\mathsf{m}'_{n,i} = m(G'_n|_{\mathcal{V}^{i,j_i}_{G_0,\delta}})$. In fact, from Observation 1, it is straightforward that the tree with respect to the restriction of $G_n$ and $G_n'$ into Voronoi cell ${\mathcal{V}^{i,j_i}_{G_0,\delta}}$ is a subtree comprising some nodes in $\mathrm{Child}(J_r)$ and their descendants, with identical weight $\ouppi$. Using Proposition \ref{prop:POT_based_on_tree}, we obtain the first part of equation~\eqref{eqn:POT_estimation_based_on_tree}. In addition, to estimate $|\mathsf{m}_{n,i}-\mathsf{m}'_{n,i}|$, we have 
\begin{equation*}
    \mathsf{m}_{n,i} - \mathsf{m}'_{n,i}  =\sum_{\substack{J\in \mathrm{Child}(J_r)\\J \subset\mathcal{V}^{i,j_i}_{G_0,\delta}}} \ouppi_{J} \Rightarrow |\mathsf{m}_{n,i} - \mathsf{m}'_{n,i}| \leq \sum_{\substack{J\in \mathrm{Child}(J_r)\\J \subset\mathcal{V}^{i,j_i}_{G_0,\delta}}}|\ouppi_{J}|.
\end{equation*}
Thus, we achieve the second part of equation~\eqref{eqn:POT_estimation_based_on_tree}. In addition, since $\varepsilon_{J} \lesssim 1$, we achieve estimation \eqref{eqn:D_estimation_based_on_tree}. Combining equations~\eqref{eqn:f_estimation_based_on_tree} and~\eqref{eqn:D_estimation_based_on_tree}, since $q_J \geq |J|-1$, we reach the contradiction. Thus, we achieve the desired inequality in equation~\eqref{eqn_new:L_1_distance_greater_than_D_locally}.  \\

\noindent
\textit{(ii) Global part in equation~\eqref{eqn_new:L_1_distance_greater_than_D_globally}: } Suppose that the inequality in equation~\eqref{eqn_new:L_1_distance_greater_than_D_globally} does not hold, then there exists a sequence of measures $G_n\in \mathcal{G}_{\leq k}(\Gamma)$ and $G_n' \in \mathcal{G}_{\leq k}(\Gamma)$ such that 
\begin{equation*}
   \lim_{n\to \infty} \|p_{G_n}-p_{G'_n}\|_1/ \overline{\mathsf{VPOT}}_{G_n,\delta}(G_n,G_n') = 0.
\end{equation*}
Noting that the parameter space $\Theta$ is compact, by extracting a subsequence, we can suppose that there exist two measures $G,G' \in \mathcal{G}_{\leq k}(\Gamma)$ such that $G_n \xrightarrow{W_1} G$ and $G'_n \xrightarrow{W_1} G'$. In addition, $\overline{\mathsf{VPOT}}_{G,\delta}(G,G')$ is bounded, we have $\|f_{G_n}-f_{G'_n}\|_1 \to 0$, by passing the limit $n\to \infty$, we have $\|f_{G} - f_{G'}\|_1 = 0$, which implies $f_{G} = f_{G'}$. As a result, $G= G'$ under Assumption $C(2k)$. Moreover, as $\mathcal{G}_{\leq k}(\Gamma)$ is a compact set, $G_n \in \mathcal{G}_{\leq k}(\Gamma)$ implies its limit $G'\equiv G \in \mathcal{G}_{\leq k}(\Gamma)$.  \\

\noindent
We similarly consider the global part in two cases as in local part. If $\varepsilon_{J_r} \to 0$, we use the identical argument, especially for the $\overline{\mathsf{VPOT}}_{G_n}(G_n,G_n')$, which becomes the Wasserstein distance. Otherwise, when $\varepsilon_{J_r} \not\to 0$, we consider the following observation: 
\begin{observation}
\label{obs:n_large_contain_global}
For every $i \in [k]$, there exists $j_i \in [2k]$  such that, for all sufficiently large \(n\) and every node \(J\in\operatorname{Desc}(J_r)\), either  $J \cap \mathcal{V}^{i,j_i}_{G_n,\delta} = \varnothing$ or $J  \subset \mathcal{V}^{i,j_i}_{G_n,\delta}$. 
\end{observation}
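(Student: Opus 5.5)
Everything reduces to the geometry of the limiting configuration. From the global part we have $G_n\to G$ and $G_n'\to G$ in $W_1$, with $G\in\mathcal{G}_{\leq k}(\Gamma)$. After passing to a further subsequence as in Lemma~\ref{lemma:nice_subsequence}, every atom of $G_n$ and of $G_n'$ converges. Hence every node $J\in\operatorname{Desc}(J_r)$ has all of its points converging to a single limit $\eta(J)\in\Gamma$. There are at most $2k$ such limit points.

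The difference from Observation~\ref{obs:n_large_contain_local} is that the reference measure now varies with $n$, so the extended cells $\mathcal{V}^{i,j}_{G_n,\delta}$ move. I would handle this by first fixing the atoms of $G_n$. We label them $\gamma_{n,1},\dots,\gamma_{n,k_n}$. Passing to a subsequence, $k_n$ is constant and each $\gamma_{n,i}\to\gamma_i^\infty$. The limits $\gamma_i^\infty$ need not be distinct, because atoms of $G_n$ may merge.

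Next I would define the limiting cells. Let
\[
\mathcal{V}^i_\infty=\{\gamma\in\Gamma:\|\gamma-\gamma^\infty_i\|_2\le\|\gamma-\gamma^\infty_\ell\|_2\ \forall \ell\neq i\},
\]
and let $\mathcal{V}^{i,j}_\infty$ be its open $j\delta/(2k)$-extension. For each fixed $i$, the boundaries of the level sets $\{\gamma:\mathrm{dist}(\gamma,\mathcal{V}^i_\infty)<j\delta/(2k)\}$, $j\in[2k]$, lie in the pairwise disjoint sets $\{\mathrm{dist}(\cdot,\mathcal{V}^i_\infty)=j\delta/(2k)\}$. There are at most $2k$ limit points $\eta$. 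However, at least one $\eta$ lies in $\mathcal{V}^i_\infty$ itself, since it is the limit of the $i$-th atom, and there $\mathrm{dist}=0$. So at most $2k-1$ of the level sets can be hit by a limit point. This leaves some $j_i$ with $\mathrm{dist}(\eta,\mathcal{V}^i_\infty)\neq j_i\delta/(2k)$ for every limit point $\eta$.

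The main step is a stability statement. The function $\phi_n(\gamma):=\mathrm{dist}(\gamma,\mathcal{V}^i_{G_n})$ should converge to $\phi_\infty(\gamma):=\mathrm{dist}(\gamma,\mathcal{V}^i_\infty)$ at each limit point $\eta$, and jointly in $\gamma$ near $\eta$. Given this, the choice of $j_i$ gives a margin $\rho>0$ with $|\phi_\infty(\eta)-j_i\delta/(2k)|>2\rho$. For large $n$, every point of $J$ lies close enough to $\eta$ that $\phi_n$ stays on the same side of $j_i\delta/(2k)$. Therefore $J\subset\mathcal{V}^{i,j_i}_{G_n,\delta}$ or $J\cap\mathcal{V}^{i,j_i}_{G_n,\delta}=\varnothing$, uniformly over the finitely many nodes $J$.

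I expect this stability step to be the main obstacle. Voronoi cells are not continuous in their generators when generators merge. If $\gamma_{n,i}$ and $\gamma_{n,\ell}$ share a limit, $\mathcal{V}^i_{G_n}$ can be a thin wedge whose shape depends on the direction of approach, and it need not converge to $\mathcal{V}^i_\infty$.

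To fix this, I would first pass to a further subsequence on which the cells $\mathcal{V}^i_{G_n}$ converge in the Hausdorff metric on the compact $\Gamma$; Blaschke selection applies because they are closed subsets of a compact set. I would then replace $\mathcal{V}^i_\infty$ by this Hausdorff limit $\mathcal{V}^i_\star$. Hausdorff convergence gives $\sup_\gamma|\phi_n(\gamma)-\mathrm{dist}(\gamma,\mathcal{V}^i_\star)|\to0$, since distance functions are $1$-Lipschitz and differ by at most the Hausdorff distance. This gives exactly the uniform convergence needed. The counting argument above is then run with $\mathcal{V}^i_\star$ in place of $\mathcal{V}^i_\infty$. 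It still works because $\gamma^\infty_i\in\mathcal{V}^i_\star$, as $\gamma_{n,i}\in\mathcal{V}^i_{G_n}$.

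The choice of $j_i$ depends only on the subsequence, not on $n$. This yields Observation~\ref{obs:n_large_contain_global}. The remaining part of the global case then proceeds as in the local Case~2: the POT estimate \eqref{eqn:POT_estimation_based_on_tree} is applied cellwise, and the identifiability lower bound \eqref{eqn:f_estimation_based_on_tree} gives the contradiction.
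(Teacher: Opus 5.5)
Your proposal is correct and matches the paper's argument: pass to a further subsequence on which $\mathcal{V}^i_{G_n}$ converges in Hausdorff distance, deduce uniform convergence of $\mathrm{dist}(\cdot,\mathcal{V}^i_{G_n})$, choose $j_i$ by counting limit locations against the $2k$ distinct radii $j\delta/(2k)$, and conclude with a positive margin. The one minor difference is the count. You discard one limit point because $\gamma_i^\infty$ lies in the Hausdorff limit at distance $0$. The paper instead bounds the number of distinct limit locations by $2k-1$, using that $G_n$ and $G_n'$ share a common limit. Either count suffices.
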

\noindent
Indeed, we first fix \(i\in[k]\). Since \(\Gamma\) is compact, after passing to a further subsequence if necessary, we may assume that the Voronoi cells \(\mathcal V_{G_n}^{i}\) converge in the Hausdorff distance to a nonempty compact set \(C_i\subseteq\Gamma\). Consequently, $$\sup_{\gamma \in \Gamma}|\mathrm{dist}(\gamma,\mathcal{V}^i_{G_n}) - \mathrm{dist}(\gamma,C_i)| \to 0.$$ 
\noindent
For each node \(J\in\operatorname{Desc}(J_r)\), let \(\eta_J\) denote the common limiting location of the support points indexed by \(J\). In other words
$\max_{\ell\in J}
    \|\oupgamma_{\ell}-\eta_J\|_2
    \to 0$.
There are at most $2k-1$ distinct limiting locations among the nodes
in $\operatorname{Desc}(J_r)$. On the other hand, the $2k$ extension
radii $\rho_j:=\frac{j\delta}{2k}$,
are pairwise distinct. Each limiting location $\eta_J$ rules out at
most one index $j$, namely an index satisfying $\operatorname{dist}(\eta_J,C_i)=\rho_j$. Therefore, there exists $j_i\in[2k]$ such that $\operatorname{dist}(\eta_J,C_i)\neq \rho_{j_i}$
for every $J\in\operatorname{Desc}(J_r)$. \\

\noindent
Since the tree contains only finitely many nodes, we may define
\[
c_i:=
\min_{J\in\operatorname{Desc}(J_r)}
\left|
  \operatorname{dist}(\eta_J,C_i)-\rho_{j_i}
\right|>0.
\]
By the Hausdorff convergence above and the convergence
$J\to\{\eta_J\}$, for all sufficiently large $n$ and every
$\ell\in J$,
\[
\left|
  \operatorname{dist}
  \bigl(\oupgamma_{\ell},\mathcal{V}_{G_n}^{i}\bigr)
  -
  \operatorname{dist}(\eta_J,C_i)
\right|
<\frac{c_i}{2}.
\]
Consequently, all the points $\oupgamma_{\ell}$ with $\ell\in J$ lie
on the same side of the level set
\[
    \left\{
        \gamma\in\Gamma:
        \operatorname{dist}
        \bigl(\gamma,\mathcal{V}_{G_n}^{i}\bigr)
        =\rho_{j_i}
    \right\}.
\]
Recalling that $\mathcal{V}_{G_n,\delta}^{i,j_i}
    =
    \left\{
        \gamma\in\Gamma:
        \operatorname{dist}
        \bigl(\gamma,\mathcal{V}_{G_n}^{i}\bigr)
        <\rho_{j_i}
    \right\}$, we conclude that either
$J\subseteq\mathcal{V}_{G_n,\delta}^{i,j_i}$
or $J\cap\mathcal{V}_{G_n,\delta}^{i,j_i}
    =\varnothing$. This proves the observation.


Using the same argument as in local part, we can prove that when $n$ is sufficiently large, the distance between $G_n$ and $G_n'$ indeed can be estimated as: 
\begin{equation*}
   \overline{\mathsf{VPOT}}_{G_n}(G_n,G_n') \lesssim \max_{J \in\mathrm{Desc}(J_r)\\}|\ouppi_J|\varepsilon_{J^{\uparrow}}^{q_J-1} 
\end{equation*}
where $q_J = \min_{1\leq i\leq k}\{|\mathcal{V}^{i,j_i}_{G_n,\delta}|:J\subset \mathcal{V}^{i,j_i}_{G_n\delta}\}$. Using the coarse-graining expansion of $\|F_{G_n}-F_{G_n'}\|_\infty$, we also reach the contradiction. Thus, this proves the global assertion. 
\end{proof}

\begin{proof}[Proof of Theorem \ref{thm:upper_bound}] 

\textit{(i) Local part: } 
For the local regime in equation \eqref{eqn:main_result_local_upper_bound}, we first prove that for $\overline{\varepsilon} = \varepsilon/2$, where $\varepsilon$ is chosen in equation \eqref{eqn:L_1_distance_greater_than_D_locally}, there exists a constant $C$ such that for each $G_* \in \mathcal{G}_{\leq k}(\Gamma)$ satisfying $W_1(G_*,G_0) < \overline{\varepsilon}$,  
\begin{equation}
    \label{eqn:local_global_situation}
    \|p_G-p_{G_*}\|_1 \geq C\cdot\mathsf{VPOT}_{G_0,\delta}(G,G_*). 
\end{equation}
In fact, if $W_1(G,G_0) < \varepsilon$, then equation \eqref{eqn:L_1_distance_greater_than_D_locally} implies that there exists $C_1$ such that $\|p_G-p_{G_*}\|_1 \geq C_1\cdot\mathsf{VPOT}_{G_0,\delta}(G,G_*)$. For $W_1(G,G_0) \geq \varepsilon$, suppose that there exist two sequences $G_{n} \in \mathcal{G}_{\leq k}(\Theta)$ and $G_{n,*} \in \mathcal{G}_{\leq k}(\Theta)$ such that $W_1(G_{n},G_0) \geq \varepsilon$,  $W_1(G_{n,*},G_0) \leq \overline{\varepsilon}$, and $\|p_{G_n}-p_{G_{n,*}}\|_1/\mathsf{VPOT}_{G_0,\delta}(G_n,G_{n,*}) \to 0$. Noting that $\mathsf{VPOT}_{G_0,\delta}(G_n,G_{n,*})$ is bounded in $\mathcal{G}_{\leq k}(\Theta)$, we have $\|p_{G_n}-p_{G_{n,*}}\|_1 \to 0$. From the compactness of $\mathcal{G}_{\leq k}(\Theta)$, by extracting a subsequence, we can suppose that $G_n \xrightarrow{W_1} \overline{G}$ and $G_{n,*} \xrightarrow{W_1} \overline{G}_*$. Using Bounded Convergence Theorem, we have $\|p_{G_n}-p_{G_{n,*}}\|_1 \to \|p_{\overline{G}}-p_{\overline{G}_{*}}\|_1$, which implies $p_{\overline{G}}=p_{\overline{G}_{*}}$. From identifiability assumption, it means that $\overline{G} = \overline{G}_{*}$. However, as $W_1(G_{n},G_0) \geq \varepsilon$ and $W_1(G_{n,*},G_0) \leq \overline{\varepsilon}$, by continuity of Wasserstein distance, we have $W_1(\overline{G},G_0) \geq \varepsilon$ and $W_1(\overline{G}_{*},G_0) \leq \overline{\varepsilon}$. This cannot happen when $\overline{G} = \overline{G}_{*}$ which is a contradiction. Thus, there exists a constant $C_2$ such that for $W_1(G_*,G_0) < \overline{\varepsilon}$ and $W_1(G,G_0) \geq \varepsilon$, we have $ \|p_G-p_{G_*}\|_1 \geq C_2\cdot\mathsf{VPOT}_{G_0,\delta}(G,G_*)$. Overall, equation \eqref{eqn:local_global_situation} holds for $C = \min\{C_1,C_2\}$. Now, for $G \in \mathcal{G}_{\leq k}(\Gamma)$ such that $W_1(G,G_0) \leq \bar{\varepsilon}$, we have uniformly that 
\begin{equation*}
    \mathbb{E}_{G}[\mathsf{VPOT}_{G_0,\delta}(\widehat{G}_n(k),G)] \overset{(a)}{\lesssim} \mathbb{E}_{G}[\|p_{\widehat{G}_n(k)}-p_{G}\|_1] \overset{(b)}{\lesssim} (\log(n)/n)^{1/2},
\end{equation*}
where the inequality $(a)$ is due to equation~\eqref{eqn:local_global_situation} and the inequality $(b)$ is due to Proposition \ref{prop:MLE_estimation}.  \\

\noindent
\textit{(ii) Global part} For the global regime in equation \eqref{eqn:main_result_global_upper_bound}, we also have uniformly that
\begin{equation*}
    \mathbb{E}_{G}[\mathsf{VPOT}_{G,\delta}(\widehat{G}_n(k),G)] \overset{(c)}{\lesssim} \mathbb{E}_{G}[\|p_{\widehat{G}_n(k)}-p_{G}\|_1] \overset{(d)}{\lesssim} (\log(n)/n)^{1/2},
\end{equation*}
where the inequality $(c)$ is due to equation \eqref{eqn:L_1_distance_greater_than_D_globally} and the inequality $(d)$ is due to Proposition \ref{prop:MLE_estimation}. 
\end{proof}
\section{Conclusion}
\label{sec:conclusion}
In this paper, we aim to characterize the heterogeneous convergence behavior of parameter estimation in finite mixtures. This cannot be fully captured by the commonly used Wasserstein distance, which assigns the same convergence rate to all components. To this end, we introduce the Voronoi-based partial optimal transport (VPOT) loss, which localizes the comparison of two mixing measures to Voronoi neighborhoods and uses partial optimal transport to accommodate the unequal masses of their local restrictions. By raising the first-order POT discrepancy within each neighborhood to a power determined by the number of locally competing atoms, VPOT adapts to the local degree of singularity of the mixture model. Under suitable regularity and strong identifiability conditions, we establish uniform upper bounds under the VPOT loss that admit a heterogeneous component-wise interpretation: up to logarithmic factors, local configurations indexed by $r_i$ correspond to estimation rates of order $n^{-1/(2r_i)}$, where $r_i$ reflects the number of locally competing components. In the most singular local configuration, our result recovers the classical worst-case rate $n^{-1/(4d_0+2)}$ established in previous work, whereas less singular configurations admit strictly faster rates. We further establish a minimax lower bound showing that the convergence rate for estimating the mixing measure as a whole under the VPOT loss is optimal; however, this result does not imply separate minimax lower bounds for each local POT discrepancy or for individual mixture parameters. Moreover, our analysis applies to parameter spaces of arbitrary fixed dimension without requiring a uniform lower bound on the mixing weights or prior knowledge of the true number of components. Taken together, these results provide a configuration-adaptive characterization of the statistical complexity of parameter recovery in finite mixture models and demonstrate that partial optimal transport offers a natural geometry for capturing their heterogeneous local behavior.\\

\appendix
\centering
\textbf{\Large{Supplement to \\
``Characterizing Heterogeneous Rates in Finite Mixture Estimation\vspace{0.2em} via Partial Optimal Transport''}}

\justifying
\setlength{\parindent}{0pt}
\section{Coarse-Graining Tree Structure and Application}
\label{app:coarse_graining}
\subsection{Coarse-Graining Tree and Notation}

The concept of \textit{coarse-graining tree} was first introduced in \cite{heinrich2018}, which is a tool to study the asymptotic closeness of two discrete distributions. The original article deals with one-dimensional setting, and the multi-dimensional counterpart is generalized later in \cite{wei2023minimum}. In this section, we recall the basic concept, intuition, and the key results to be used later in our proof. 

For two sequences of measures $(G_n)_n \subset \mathcal{G}_k(\Gamma)$ and $(G'_n)_n \subset \mathcal{G}_{k'}(\Gamma)$, we write $G_n = \sum_{i=1}^{k} \pi_{i,n}\delta_{\gamma_{i,n}}$ and $G'_n = \sum_{i=1}^{k'} \pi'_{i,n}\delta_{\gamma'_{i,n}}$, and the signed measure $G_n - G'_n = \sum_{i=1}^{k+k'}\overline{\uppi}_{i,n}\delta_{\overline{\upgamma}_{i,n}}$, where 
\begin{equation*}
    \overline{\uppi}_{i,n} = \begin{cases}
        \pi_{i,n},\  1\leq i\leq k,\\
        -\pi_{i-k,n},\  k+1 \leq i\leq k+k',
    \end{cases}
    \text{ and }      \overline{\upgamma}_{i,n} = \begin{cases}
        \gamma_{i,n},\  1\leq i\leq k,\\
        \gamma_{i-k,n},\  k+1 \leq i\leq k+k'
    \end{cases}. 
\end{equation*}

The following lemma serves to categorize the intrinsic closeness among $\gamma_{i,n}$'s up to asymptotic order. As it is a straightforward generalization of \cite[Lemma 7.1]{heinrich2018} and also being reformulated in \cite[Lemma A.3]{wei2023minimum}, we omit the proof.
\begin{lemma}
\label{lemma:nice_subsequence}
    There exists a subsequence of $(G_n-G_n')_n$ (possibly itself) such that we can choose a  finite number $S$ such that 
    \begin{equation*}
        0 \equiv \varepsilon_0(n) < \varepsilon_1(n) < \cdots \varepsilon_S(n) \equiv 1 \text{ satisfying } \varepsilon_{s}(n) = o(\varepsilon_{s+1}(n)),
    \end{equation*}
    and for all $1\leq i,j \leq k+k'$, there exists a unique $s(i,j)\in \{0\}\cup [S]$ such that $\|\overline{\upgamma}_{i,n}-\overline{\upgamma}_{j,n}\| \asymp \varepsilon_{s(i,j)}(n)$. 
\end{lemma}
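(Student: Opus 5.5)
The plan follows the standard multiscale extraction argument of Heinrich and Kahn (Lemma 7.1), adapted to $\mathbb{R}^d$. First, use compactness of $\Gamma$ and pass to a subsequence along which every atom location $\overline{\upgamma}_{i,n}$ converges. Since there are only finitely many pairs $(i,j)$ with $1\le i,j\le k+k'$, set $a_{ij}(n):=\|\overline{\upgamma}_{i,n}-\overline{\upgamma}_{j,n}\|_2$. These are bounded because $\Gamma$ is compact. Pairs with $a_{ij}(n)=0$ along a subsequence, or pairs that coincide identically, are assigned the scale $\varepsilon_0\equiv0$. After a further diagonal extraction, each remaining $a_{ij}(n)$ is either eventually zero or eventually positive.

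Next, compare the positive sequences with one another. For any two positive bounded sequences $a(n)$ and $b(n)$, the ratio $a(n)/b(n)$ lies in $[0,\infty]$. Pass to a subsequence along which this ratio converges in the extended reals. Doing this for all finitely many pairs of pairs, we get a subsequence on which every ratio $a_{ij}/a_{i'j'}$ tends to $0$, to a finite positive limit, or to $\infty$. Declare $a_{ij}\sim a_{i'j'}$ when the limit is finite and positive. This is an equivalence relation, since the limits multiply. The resulting equivalence classes are totally ordered by the relation ``the ratio tends to $0$''. Also include the constant sequence $1$: any $a_{ij}$ with a positive limit belongs to the class of $1$.

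Choose one representative from each class and list them in increasing order as $\varepsilon_1(n)<\cdots<\varepsilon_S(n)$, with the class of $1$ placed last, so that $\varepsilon_S\equiv1$. If no pair has a non-vanishing distance, add $\varepsilon_S\equiv1$ as an extra top scale. Because consecutive classes satisfy ratio $\to0$, we get $\varepsilon_s=o(\varepsilon_{s+1})$. The strict inequality $\varepsilon_s(n)<\varepsilon_{s+1}(n)$ then holds for all large $n$, and we drop finitely many initial terms to make it hold for all $n$. Each $a_{ij}$ is $\asymp$ its class representative, since the ratio has a finite positive limit and is therefore eventually bounded above and below. Uniqueness of $s(i,j)$ holds because distinct scales are not comparable up to constants.

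The only delicate point is the bookkeeping of nested subsequence extractions, together with the treatment of eventually-zero distances and coinciding atoms. Both are handled by finiteness of the index set. Since the lemma is stated as a straightforward generalization whose proof is omitted, I expect no real obstacle: the Euclidean norm enters only through the real-valued distances, so the one-dimensional proof transfers verbatim.
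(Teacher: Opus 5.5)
Your argument is correct. It is essentially the standard extraction argument of Heinrich and Kahn's Lemma 7.1, which the paper cites in place of a proof. In that argument, all pairwise ratios of the finitely many distance sequences, together with the constant $1$, are made to converge in $[0,\infty]$ along a subsequence. The sequences are then grouped into $\asymp$-classes and the classes are totally ordered, with eventually-zero distances assigned to $\varepsilon_0\equiv 0$. Since only the real-valued distances $\|\overline{\upgamma}_{i,n}-\overline{\upgamma}_{j,n}\|$ enter, the extension from dimension one to $\mathbb{R}^d$ is indeed immediate, as you note.
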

We first observe that $s:\mathcal [k+k']\times[k+k']\to \{0\} \cup [S]$
defines an ultrametric on $[k+k']$: in addition to
nonnegativity, symmetry, it satisfies
the strong triangle inequality $s(i,j)\leq \max\{s(i,\ell),s(\ell,j)\},\, i,j,\ell\in[k+k'] $. Consequently, the family of open $s$-balls is nested: any two such balls
are either disjoint or one is contained in the other. This nested family encodes the different asymptotic scales at which the
sequences $\overline{\upgamma}_{i,n}$ approach one another along the selected
subsequence. More precisely, the ball
$\overline{B}_i(r):=\{j\in[k+k']:s(i,j)\leq r\}$
collects the indices associated with the cluster containing
$\overline{\upgamma}_{i,n}$ at the scale corresponding to $\varepsilon_r(n)$. If
$r'<r$, then $\overline{B}_i(r')\subseteq \overline{B}_i(r)$,
so decreasing the radius produces a finer cluster around
$\overline{\upgamma}_{i,n}$. Following the hierarchical constructions in
\cite{heinrich2018,wei2023minimum}, we represent this nested system of
clusters by a \emph{coarse-graining tree}.
\begin{definition}[Coarse-graining tree]
Consider the family $\mathscr{B}:=\{\overline{B}_i(r): i\in[k+k'],\ r\in\{0\}\cup[S]\}$, where repeated balls are identified. The \emph{coarse-graining tree} $\mathcal{T}$ is the rooted tree whose nodes are the elements of $\mathscr{B}$ and whose hierarchical ordering is induced by set inclusion. The root of $\mathcal{T}$ is $J_{\mathrm{root}}:=[k+k']$. For every non-root node $J\in\mathcal{T}$, its \emph{parent}, denoted by $J^\uparrow$, is the unique node satisfying $J\subsetneq J^\uparrow$ such that there is no $K\in\mathcal{T}$ for which $J\subsetneq K\subsetneq J^\uparrow$. The sets of children and descendants of a node $J$ are respectively defined by $\operatorname{Child}(J):=\{I\in\mathcal{T}:I^\uparrow=J\}$ and $\operatorname{Desc}(J):=\{I\in\mathcal{T}:I\subsetneq J\}$. Finally, the \emph{diameter} of $J$ is defined as $s(J):=\max_{i,j\in J}s(i,j)$.
\end{definition}
The diameter level $s(J)$ describes the asymptotic scale of the cluster $J$. More precisely, for any $i,j\in J$, the corresponding representatives satisfy $\|\bar{\upgamma}_{i,n}-\bar{\upgamma}_{j,n}\|\lesssim\varepsilon_{s(J)}(n)$. Moreover, if $K$ and $K'$ are two distinct children of $J$, then for every $i\in K$ and $j\in K'$, we have $\|\bar{\upgamma}_{i,n}-\bar{\upgamma}_{j,n}\|\asymp\varepsilon_{s(J)}(n)$. Thus, $s(J)$ identifies the scale at which the representatives in different immediate subclusters of $J$ become separated. Although the scale sequence $\{\varepsilon_s(n)\}_{s=0}^S$ depends on $n$, the combinatorial structure of the coarse-graining tree $\mathcal{T}$ does not depend on $n$ once the relevant subsequence has been fixed. In this way, $\mathcal{T}$ records the hierarchical clustering of the representatives according to the asymptotic orders of their pairwise distances \cite{heinrich2018,wei2023minimum}.

\subsection{Partial Optimal Transport Bounds} 
Now we estimate the POT distance through the tree $\mathcal{T}$. For convenience, we omit the index $n$ in $\pi_i$, $\overline{\uppi}_{i}$, and $\gamma_{i}$ throughout our analysis. For two measures $G_n$ and $G_n'$ whose coarse-graining tree is denoted by $\mathcal{T}$, we write for any subset $J$ of $[1,k+k']$, 
\begin{equation*}
    \pi_{J} = \sum_{i\in J \cap [1,k]}\pi_i, \quad \pi'_{J} = \sum_{i\in J \cap [k+1,k+k']}\pi_i, \quad \ouppi_J = \sum_{j\in J} \ouppi_j, \quad \varepsilon_J = \varepsilon_{s(J)}. 
\end{equation*}

\begin{lemma}[Modification of Lemma B.2] 
\label{lemma:optimal_coupling}
Consider two measures $G_1$ and $G_2$ such that $m(G_1) \leq m(G_2)$. Then, we can find a measure $\Pi$ on $J_r \times J_r$ with marginal measures $\mu_1 = G_1$ and $\mu_2 \preceq G_2$ such that $\Pi(J,J) := \Pi(J\times J) = \pi_{J} \wedge \pi'_J$. 

\end{lemma}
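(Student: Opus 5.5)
We will prove the statement by constructing the coupling greedily along the coarse-graining tree $\mathcal{T}$, working from the leaves up to the root $J_r$. Here $G_1$ is the measure carried by the indices $[1,k]$ and $G_2$ the one carried by $[k+1,k+k']$, with masses $\pi_J$ and $\pi'_J$ on a node $J$. The key structural fact is that the nodes of $\mathcal{T}$ form a laminar family: two nodes are either disjoint or nested. The children of any node therefore partition it, and each leaf is a set of indices whose representatives coincide asymptotically, i.e.\ lie at scale $\varepsilon_0\equiv 0$.

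The construction runs by induction on the height of a node $J$. For each $J$ we build a sub-coupling $\Pi_J$ supported on $J\times J$ with three properties:
\begin{itemize}
\item its first marginal is $\preceq G_1|_J$ and its second marginal is $\preceq G_2|_J$;
\item its total mass is exactly $\pi_J\wedge\pi'_J$;
\item it leaves unmatched residual masses $(\pi_J-\pi'_J)_+$ on the $G_1$ side and $(\pi'_J-\pi_J)_+$ on the $G_2$ side.
\end{itemize}
For a leaf, we match the atoms of $G_1$ and $G_2$ inside $J$ in an arbitrary (say north-west corner) way until one side is exhausted. For an internal node $J$ with children $K_1,\dots,K_m$, we first take $\sum_\ell \Pi_{K_\ell}$. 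The residuals of the children are one-signed: those $K_\ell$ with $\ouppi_{K_\ell}>0$ carry leftover $G_1$-mass, and the others carry leftover $G_2$-mass. We then couple these leftover positive and negative residuals across children, again greedily, until one side runs out. The total matched mass becomes
\[
\sum_\ell \pi_{K_\ell}\wedge\pi'_{K_\ell}+\Big(\sum_\ell(\pi_{K_\ell}-\pi'_{K_\ell})_+\Big)\wedge\Big(\sum_\ell(\pi'_{K_\ell}-\pi_{K_\ell})_+\Big)=\pi_J\wedge\pi'_J .
\]
This identity follows from writing $a\wedge b=(a+b-|a-b|)/2$ and using $\sum_\ell|\ouppi_{K_\ell}|-|\ouppi_J|=2\min(\text{pos},\text{neg})$. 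Since every added piece lies in $J\times J$ and the previously placed pieces lie in $K_\ell\times K_\ell\subset J\times J$, we get $\Pi_J(J\times J)=\pi_J\wedge\pi'_J$ for every node $J$. This is because mass placed at ancestors is never inside $J\times J$ unless it was placed at $J$ or its descendants, which is exactly the content of laminarity.

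At the root, $\Pi:=\Pi_{J_r}$ has mass $\pi_{J_r}\wedge\pi'_{J_r}=m(G_1)\wedge m(G_2)=m(G_1)$. Its first marginal is $\preceq G_1$ with total mass $m(G_1)$, so it equals $G_1$. Its second marginal $\mu_2$ is $\preceq G_2$ by construction, which matches the required constraints in the POT definition \eqref{eqn:POT_definition}.

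The main obstacle is the bookkeeping for $\Pi(J\times J)$. We must confirm that the residual matchings done at a strict ancestor $A\supsetneq J$ never pair two indices both in $J$. This holds because at node $A$ we only pair residuals belonging to \emph{different} children of $A$, and $J$ is contained in a single child. Every earlier matching is confined to some subtree: either inside $J$, or in a node disjoint from $J$. It is therefore enough to check that the greedy residual matching produces exactly $\pi_J\wedge\pi'_J$, which is the displayed identity above.
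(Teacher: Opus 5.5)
Your proposal is correct and is essentially the argument the paper has in mind: the paper's proof only defers to the recursive construction of Heinrich and Kahn's Lemma B.2, and your leaves-to-root greedy matching (first within each child, then across children whose residuals have opposite signs) is that construction. The only change needed for unequal masses is that the unmatched $G_2$-mass $\pi'_{J_r}-\pi_{J_r}\geq 0$ is left over at the root. Your mass identity at each internal node, together with the observation that an ancestor only pairs indices lying in different children of it, supplies the details the paper leaves to the citation.
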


\begin{proof}
    Use the same recurrent argument as in Lemma B.2, \cite{heinrich2018}. 
\end{proof}

\begin{proposition}[Representation of partial OT discrepancy based on tree structure]
\label{prop:POT_based_on_tree}
Suppose that $G_n \in \mathcal{G}_{k}(\Gamma)$ and $G'_n \in \mathcal{G}_{k'}(\Gamma)$. Then, for any $q \geq 1$, we can find an equivalent estimation for the $q$-POT discrepancy between $G_n$ and $G_n'$ 
    \begin{equation}
        (\mathsf{POT}_q(G_n,G_n'))^{q} \asymp \max_{J \in \mathrm{Desc}(J_r)}|\ouppi_J|\varepsilon^q_{J^{\uparrow}} + |m(G_n) - m(G_n')|. 
    \end{equation}
\end{proposition}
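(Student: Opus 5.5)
We prove the two-sided estimate separately, following Heinrich and Kahn's tree bound for $W_q^q$ but carrying the unmatched mass explicitly. Write $m:=m(G_n)\wedge m(G_n')$, and assume without loss of generality $m(G_n)\le m(G_n')$.

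\emph{Upper bound.} Apply Lemma~\ref{lemma:optimal_coupling} with $G_1=G_n$, $G_2=G_n'$. This produces a sub-coupling $\Pi$ with first marginal $G_n$, second marginal $\mu_2\preceq G_n'$, and $\Pi(J\times J)=\pi_J\wedge\pi'_J$ for every node $J$. This is admissible in \eqref{eqn:POT_definition}, so
\[
\mathsf{POT}_q^q(G_n,G_n')\le \sum_{i,j}\Pi_{ij}\|\gamma_i-\gamma'_j\|^q+|m(G_n)-m(G_n')|.
\]
Group the transport cost by the smallest common node containing both endpoints. If $i,j$ both lie in $J$ but in distinct children of $J$, then $\|\gamma_i-\gamma'_j\|\asymp\varepsilon_J$. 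The mass placed by $\Pi$ at the level of $J$ but not inside any single child is
\[
\Pi(J\times J)-\sum_{K\in\mathrm{Child}(J)}\Pi(K\times K)=\pi_J\wedge\pi'_J-\sum_K\pi_K\wedge\pi'_K .
\]
Using $a\wedge b=\tfrac12(a+b-|a-b|)$, this equals $\tfrac12\bigl(\sum_K|\ouppi_K|-|\ouppi_J|\bigr)\le\tfrac12\sum_K|\ouppi_K|$. Summing over the finitely many nodes (at most $2(k+k')$) gives
\[
\mathsf{POT}_q^q\lesssim\max_{K\in\mathrm{Desc}(J_r)}|\ouppi_K|\varepsilon^q_{K^\uparrow}+|m(G_n)-m(G_n')| .
\]
The leaves contribute zero cost because $\varepsilon_0=0$.

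\emph{Lower bound.} The mass term is already present in the definition, so it suffices to show that for every non-root node $K$ and every admissible pair $(\mu_1,\mu_2)$ with optimal coupling $\Pi$ we have
\[
W_q^q(\mu_1,\mu_2)+|m(G_n)-m(G_n')|\gtrsim|\ouppi_K|\varepsilon^q_{K^\uparrow}.
\]
Fix $K$ and split into two cases.

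\emph{Case 1: $|\ouppi_K|\le 2|m(G_n)-m(G_n')|$.} The mass term dominates, because $\varepsilon\le 1$.

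\emph{Case 2: $|\ouppi_K|> 2|m(G_n)-m(G_n')|$.} Suppose $\pi_K>\pi'_K$. Then $\mu_1$ has all the mass of $G_n$ on $K$ except a portion of size at most $|m(G_n)-m(G_n')|$ (exactly none in the ordering we fixed). So at least $\pi_K-\pi'_K-|m(G_n)-m(G_n')|\ge|\ouppi_K|/2$ of its mass on $K$ must be transported to atoms outside $K$. Every point outside $K$ lies at distance $\gtrsim\varepsilon_{K^\uparrow}$ from every point of $K$, by the ultrametric structure: pairs separated at $K^\uparrow$ or higher have distance $\asymp\varepsilon_{s}\ge\varepsilon_{K^\uparrow}$. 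Hence the cost is at least $c|\ouppi_K|\varepsilon_{K^\uparrow}^q$. The case $\pi'_K>\pi_K$ is symmetric, with $\mu_2$ now possibly trimming a portion of total size $m(G_n')-m(G_n)$, and the same slack absorbs it.

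Taking the maximum over $K$ and adding the mass term, the factor of two is harmless, and this yields $\gtrsim$. The constants depend only on $k,k',q$ and on the implicit constants of Lemma~\ref{lemma:nice_subsequence}, which are uniform along the subsequence. The main obstacle is the bookkeeping in the upper bound, namely confirming that the recursive coupling of Lemma~\ref{lemma:optimal_coupling} still attains $\pi_J\wedge\pi'_J$ when the second marginal is only dominated by $G_n'$. The key point there is to let the excess mass of the heavier measure be discarded at the root level, where it is charged to $|m(G_n)-m(G_n')|$ and not to transport.
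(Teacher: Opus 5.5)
Your proof is correct. The upper bound follows the paper's approach: you use the same coupling from Lemma~\ref{lemma:optimal_coupling}. Your identity $\pi_J\wedge\pi'_J-\sum_{K\in\mathrm{Child}(J)}\pi_K\wedge\pi'_K=\tfrac12\bigl(\sum_K|\ouppi_K|-|\ouppi_J|\bigr)$ for the cross-child mass at level $J$ is a closed-form version of the paper's induction. The paper instead bounds the outflow child by child, via $\Pi(K,K^c)\le \pi_K-\pi_K\wedge\pi'_K\le|\ouppi_K|$.

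The lower bound is where you genuinely depart from the paper. The paper converts the partial problem into a balanced one in four moves:
\begin{itemize}
\item it adds a cemetery point $\partial$ at distance $M$ carrying the excess mass $m(G_n')-m(G_n)$;
\item it attaches $\partial$ to the coarse-graining tree;
\item it invokes Heinrich--Kahn's Lemma 7.3 for balanced transport on the augmented tree;
\item it pays $(M+\mathrm{Diam}(\Gamma))^q\,|m(G_n)-m(G_n')|$ to return to the original cost.
\end{itemize}
You instead run the underlying cut argument directly on an admissible pair $(\mu_1,\mu_2)$. The lighter side is untrimmed, and the heavier side can hide at most $|m(G_n)-m(G_n')|$ of the imbalance on any node $K$. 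So either the mass term already dominates $|\ouppi_K|\varepsilon_{K^\uparrow}^q$, or at least $|\ouppi_K|/2$ of mass must cross the boundary of $K$, at cost $\gtrsim\varepsilon_{K^\uparrow}^q$ per unit by the ultrametric separation.

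The paper's route lets it reuse the balanced result as a black box. Yours is self-contained and avoids checking that the augmented configuration still fits the coarse-graining framework. The paper treats that point loosely: when $\varepsilon_{J_r}\asymp 1$, the point $\partial$ is not at a strictly coarser scale than the root, so it cannot simply be placed as a new level above $J_r$. Your case split also shows exactly why the unmatched-mass term must appear on the right-hand side.
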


\begin{proof}
Write $\mathsf{m} = m(G_n),\ \mathsf{m}' = m(G_n')$. Given a coupling $\Pi = (\Pi_{ab})$ between $G_n$ and $G_n'$ specified in each part (lower bound and upper bound), using the same notation as \cite{heinrich2018}, set
\begin{align*}
    \Pi(I,I') &= \Pi(\{\gamma_i\}_{i \in I \cap [1,k]} \times \{\gamma_i\}_{i \in I' \cap [k+1,k+k']})\\
    w_q(I,I') &= \sum_{(i,i')\in I\times I'} \Pi(\{i\},\{i'\})\|\gamma_i-\gamma_{i'}\|^q. 
\end{align*}

\textbf{Upper bound: } We adapt the recursive argument of Heinrich and Kahn \cite{heinrich2018} to our notation. We show by induction over the tree that, for every node $I \in \mathrm{Desc}(J_r)$, the coupling $\Pi$ constructed in Lemma \ref{lemma:optimal_coupling} satisfies 
\begin{equation}
\label{eqn:dung_wasserstein_upper_bound_induction_hypo}
    w_q(I,I) \lesssim  \max_{J \in \mathrm{Desc}(J_r)}|\ouppi_J|\varepsilon^q_{J^{\uparrow}}. 
\end{equation}
The claim is immediate when $I$ is a leaf. Now let $I$ be an internal node and suppose that the desired bound holds for each $K \in \mathrm{Child}(I)$. Decomposing the transport according to the children of $I$, we obtain 
\begin{equation*}
    w_q(I,I) = \sum_{K \in \mathrm{Child}(I)} \left[w_q(K,K) + \sum_{\substack{K' \in \mathrm{Child}(I)\\K'\neq K}}w_q(K,K')\right].
\end{equation*}

Distinct children of $I$ are separated at scale $\varepsilon_J$, therefore, 
\begin{equation*}
\sum_{\substack{K' \in \mathrm{Child}(I)\\K'\neq K}}w_q(K,K') \lesssim  \sum_{\substack{K' \in \mathrm{Child}(I)\\K'\neq K}}\Pi(K,K')\varepsilon_J^q  \preceq \Pi(K,K^{c})\varepsilon_J^q.
\end{equation*}
By the construction of $\Pi$, mass is transported from \(K\) to another child only when the two measures have unequal masses on $K$. Hence, $\Pi(K,K^{c}) \leq \pi_K - \Pi(K,K)  \leq \pi_K -  \pi_{K} \wedge \pi'_K\leq \ouppi_{K}$. 
Consequently, we have 
\begin{equation*}
    w_q(I,I) \lesssim \sum_{K \in \mathrm{Child}(I)}[w_q(K,K)+|\ouppi_K|\times \varepsilon_I^p]. 
\end{equation*}
Because \(K^\uparrow=I\), we have \(\varepsilon_I=\varepsilon_{K^\uparrow}\). By applying the induction hypothesis to \(w_q(K,K)\) and using the fact that the tree has uniformly bounded size, we obtain the upper bound in equation~\eqref{eqn:dung_wasserstein_upper_bound_induction_hypo}. Apply equation~\eqref{eqn:dung_wasserstein_upper_bound_induction_hypo} to $I = J_r$ and adding the unmatched-mass penalty gives 
\begin{equation*}
    \mathsf{POT}^q_q(G_n,G_n') \lesssim \max_{J \in \mathrm{Desc}(J_r)}|\ouppi_J|\varepsilon^q_{J^{\uparrow}} + |m(G_n) - m(G_n')|. 
\end{equation*}

\textbf{Lower Bound: } Consider an arbitrary admissible partial coupling \(\Pi\) between \(G_n\) and \(G_n'\). We show that
\begin{equation}
\label{eqn:dung_proposition_6_lower_bound}
    w_q(J_r,J_r)+ |\mathsf{m} - \mathsf{m}'|  \gtrsim  \max_{J \in \mathrm{Desc}(J_r)}|\ouppi_I|\varepsilon^q_{I^{\uparrow}} + |\mathsf{m} - \mathsf{m}'|. 
\end{equation}
Without loss of generality, assume that $\mathsf{m}\leq \mathsf{m}'$. Introduce a cemetery point \(\partial\in \mathbb{R}^d \setminus \Gamma\) and extend the space  to 
$\overline{\Gamma} = \Gamma \cup \{\partial\}$ by setting $d(\partial,\Gamma) = M$, where $M > 1 + \mathrm{Diam}(\Gamma)$ is fixed. Define the augmented measure $\overline{G}_n := G_n + (\mathsf{m}' -\mathsf{m})\delta_{\partial}$. Let $\Pi_2$ denote the second marginal of $\Pi$, since $\mathsf{m} \leq \mathsf{m}'$, we achieve the residual measure $R'_n := G_n' - \Pi_2$ is non-negative and has total mass \(m'-m\). We extend \(\Pi\) to a coupling between \(\overline G_n\) and \(G_n'\) by setting $\overline{\Pi} := \Pi + \delta_{\partial} \otimes R_n'$. Attach the cemetery point as a singleton child above the original root, and denote the resulting tree by \(\overline{\mathcal T}\), with root \(\overline J_r\). This construction preserves every node and every scale of the original tree, while adding a node with singleton $\partial$ to the $\mathrm{Child}(J_r)$. 

Let \(\overline w_q\) denote the transport cost associated with \(\overline\Pi\). By Lemma \cite[Lemma 7.3]{heinrich2018}, 
\begin{equation}
\label{eqn:lower_bound_w_q_bar}
    \overline{w}_q(\overline{J}_r,\overline{J}_r) \gtrsim \max_{\overline{I} \in \mathrm{Desc}(\overline{J}_r)}|\ouppi_{\overline{I}}|\varepsilon^q_{\overline{I}^{\uparrow}} \overset{(*)}{\gtrsim} \max_{I \in \mathrm{Desc}(J_r)}|\ouppi_I|\varepsilon^q_{I^{\uparrow}}
\end{equation}
The second inequality holds because the original coarse-graining tree is contained in \(\overline{\mathcal T}\), and the mass discrepancies and scales of its nodes remain unchanged. On the other hand,

\begin{align}
\label{eqn:w_q_bar_and_w_q}
\nonumber  \overline{w}_q(\overline{J}_r,\overline{J}_r) &= w_q(J_r,J_r) + \int_{\Gamma}d(\partial,\gamma)dR'_n(\gamma) \\
    &\leq w_q(J_r,J_r)+ |\mathsf{m}'-\mathsf{m}|(M + \mathrm{Diam}(\Gamma))^{q}
    \lesssim w_q(J_r,J_r) + |\mathsf{m}'-\mathsf{m}|.
\end{align}
Combining equations~\eqref{eqn:lower_bound_w_q_bar} and~\eqref{eqn:w_q_bar_and_w_q} gives 
\begin{equation*}
    w_q(J_r,J_r) + |\mathsf{m}'-\mathsf{m}| \gtrsim \max_{I \in \mathrm{Desc}(J_r)}|\ouppi_I|\varepsilon^q_{I^{\uparrow}} +  |\mathsf{m}'-\mathsf{m}| . 
\end{equation*}
Since this bound holds for every admissible partial coupling \(\Pi\), taking the infimum over \(\Pi\) proves the lower bound. Combining the two estimates establishes the lower bound in equation~\eqref{eqn:dung_proposition_6_lower_bound}. 
\end{proof}

\subsection{Coarse-graining expansion} 
Recall that the cumulative distribution function associated with $\{f(\cdot\mid \gamma),\gamma \in \Gamma\}$ is defined by
$$F(x\mid \gamma) = \int_{(-\infty,x)}f(t\mid \gamma)\,dt:= \int_{-\infty}^{x_{\bar{d}}}\cdots\int_{-\infty}^{x_1}f(t\mid \gamma)\,dt_1\cdots dt_{\bar{d}}.$$
For every node $J \in \mathcal{T}$, set $$F(\cdot\mid J) = \sum_{j\in J} \overline{\uppi}_jF(\cdot \mid \gamma_j).$$ The following proposition adapts the expansion in \cite[Lemma 7.4]{heinrich2018} to the multidimensional setting. It may also be viewed as a specialization of \cite[Lemma A.7]{wei2023minimum} to the family of cumulative distribution functions considered here. We state the result explicitly in our notation for later use. In this part, we assume that $\max\{k,k'\} \leq 2\boldsymbol{k}$.

\begin{proposition}[Order $2\boldsymbol{k}$ coarse-graining expansion of $F$ at $J$]  
\label{lemma:multi_dimension_taylor_expansion}
Let $J \in \mathcal{T}$, choose a representative $\oupgamma_J \in \{\oupgamma_j:j \in J\}$ and let $2\boldsymbol{k}\geq |J|-1$. Suppose that $F(\cdot\mid \gamma)$  is continuously differentiable up to total order $2\boldsymbol{k}$ and that its derivatives of total order $2\boldsymbol{k}$ satisfy the uniform-continuity condition 
\begin{equation}
\label{eqn:dung_prop_5_uniform_continuity}
        \sup_{x \in \mathbb{R}^{\bar{d}}}|D_{\gamma}^{\boldsymbol{\alpha}}F(x\mid\gamma) - D_{\gamma}^{\boldsymbol{\alpha}}F(x\mid \gamma')| \leq \omega(\gamma - \gamma'). 
\end{equation}
Then there exist coefficients $c_{J} = (c(\boldsymbol{p}|J,\oupgamma_J))_{\boldsymbol{p} \succeq \boldsymbol{0},|\boldsymbol{p}| \leq 2\boldsymbol{k}}$ and a remainder $R(x\mid J)$ such that 
\begin{equation}
        \label{eqn:multi_dimension_taylor_expansion} 
        F(x\mid J) = \sum_{0\leq |\boldsymbol{p}|\leq 2\boldsymbol{k}}c(\boldsymbol{p}|J,\oupgamma_J)\varepsilon_J^{|\boldsymbol{p}|}D_{\gamma}^{\boldsymbol{p}}F(x\mid\oupgamma_J) + R(x\mid J). 
\end{equation}
Moreover, $c(\boldsymbol{0}|J,\oupgamma_J) = \ouppi_J$, in addition, 
\begin{equation}
\label{eqn:dung_prop_5_c_J_max}
     \|c_J\| \asymp \max_{0\leq |\boldsymbol{p}|\leq |J|-1} |c(\boldsymbol{p}|J,\oupgamma_J)|;
\end{equation}
\begin{equation}
\label{eqn:dung_prop_5_c_J_norm}
            \|c_J\| \succeq \max_{K \in \mathrm{Desc}(J)}\left[|\ouppi_K|\left(\dfrac{\varepsilon_{K^{\uparrow}}}{\varepsilon_J}\right)^{|J|-1}\right]; 
\end{equation}
and $\sup_{x\in \mathbb{R}^{\bar{d}}}|R(x\mid J)| = o(\|c_J\|\varepsilon^{2\boldsymbol{k}}_{J})$. We call it the \emph{order $2\boldsymbol{k}$  coarse-graining expansion of $F$ at $J$}. 

    
        
\end{proposition}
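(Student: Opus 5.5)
The plan is to build the expansion by Taylor-expanding each term of $F(x\mid J)=\sum_{j\in J}\ouppi_j F(x\mid\oupgamma_j)$ around the representative $\oupgamma_J$, up to total order $2\boldsymbol{k}$. Concretely, for each $j\in J$ we write $\oupgamma_j-\oupgamma_J=\varepsilon_J h_j$. The tree structure (Lemma~\ref{lemma:nice_subsequence}) gives $\|h_j\|\lesssim 1$ for every $j\in J$, and $\|h_i-h_j\|\asymp 1$ whenever $i,j$ lie in distinct children of $J$. Taylor's theorem with integral remainder then gives
\[
F(x\mid\oupgamma_j)=\sum_{|\boldsymbol{p}|\le 2\boldsymbol{k}}\frac{(\varepsilon_J h_j)^{\boldsymbol{p}}}{\boldsymbol{p}!}D^{\boldsymbol{p}}_\gamma F(x\mid\oupgamma_J)+R_j(x).
\]
We set $c(\boldsymbol{p}\mid J,\oupgamma_J):=\sum_{j\in J}\ouppi_j h_j^{\boldsymbol{p}}/\boldsymbol{p}!$ and $R(x\mid J)=\sum_j\ouppi_jR_j(x)$. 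With these choices, $c(\boldsymbol{0}\mid J,\oupgamma_J)=\ouppi_J$ is immediate. The uniform modulus \eqref{eqn:dung_prop_5_uniform_continuity} gives $\sup_x|R_j(x)|\lesssim \varepsilon_J^{2\boldsymbol{k}}\|h_j\|^{2\boldsymbol{k}}\omega(\varepsilon_J h_j)$, hence $\sup_x|R(x\mid J)|=o(\varepsilon_J^{2\boldsymbol{k}}\max_j|\ouppi_j|)$. The remainder bound therefore reduces to showing $\max_j|\ouppi_j|\lesssim\|c_J\|$, which is a special case of \eqref{eqn:dung_prop_5_c_J_norm} applied to the leaves, provided $J$ has no repeated atoms of the same sign.

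The two norm estimates rest on a linear-algebra fact. The map $(\ouppi_j)_{j\in J}\mapsto(c(\boldsymbol{p}\mid J))_{|\boldsymbol{p}|\le|J|-1}$ is a multivariate Vandermonde-type system in the nodes $h_j$. If the $h_j$ are distinct, this system is injective on $\mathbb{R}^{|J|}$ using monomials of degree at most $|J|-1$: choose a direction $v$ that separates the $h_j$, and use the one-dimensional Vandermonde in $\langle v,h_j\rangle$, whose entries are linear combinations of $c(\boldsymbol{p})$ with $|\boldsymbol{p}|\le|J|-1$. This already gives \eqref{eqn:dung_prop_5_c_J_max}. The higher moments $|\boldsymbol{p}|\ge|J|$ are polynomial combinations of the nodes, so they are bounded by $\max_j|\ouppi_j|$, which the Vandermonde inversion bounds by the low-order moments. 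The catch is that inside a child $K$ of $J$ the nodes $h_j$ collapse together, at relative scale $\varepsilon_{K}/\varepsilon_J\to0$, so the inversion constant blows up. This is exactly why \eqref{eqn:dung_prop_5_c_J_norm} carries the weights $(\varepsilon_{K^\uparrow}/\varepsilon_J)^{|J|-1}$.

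I would prove \eqref{eqn:dung_prop_5_c_J_norm} by induction down the tree. Fix $K\in\operatorname{Desc}(J)$ with parent $K^\uparrow$, and choose a linear functional $\Lambda$ on the moment vector, a divided-difference or Lagrange-type combination of $c(\boldsymbol{p})$ with $|\boldsymbol{p}|\le|J|-1$ along a generic direction $v$. Take $\Lambda$ to be the polynomial $P(t)=\prod_{i\notin K}(t-\langle v,h_i\rangle)$ with the factors coming from $K$ removed, of degree at most $|J|-1$. Then $\sum_j\ouppi_jP(\langle v,h_j\rangle)$ picks out $\ouppi_K$ times a product of distances between $K$ and the rest of $J$. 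Among the nodes of $J$ outside $K$, those in the siblings of $K$ and of its ancestors lie at relative distances of order $\varepsilon_{K^\uparrow}/\varepsilon_J$ or larger, which produces the factor $(\varepsilon_{K^\uparrow}/\varepsilon_J)^{|J|-|K|}\ge(\varepsilon_{K^\uparrow}/\varepsilon_J)^{|J|-1}$. The within-$K$ spread contributes only lower-order corrections, since the points of $K$ are all within $\varepsilon_K\ll\varepsilon_{K^\uparrow}$ of each other. Bounding $|\Lambda(c_J)|\lesssim\|c_J\|$, since the coefficients of $P$ are $O(1)$, gives the claim.

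I expect the main obstacle to be this last step. One must control the cross terms in which points of $K$ are evaluated at $P$: they are not exactly equal, so one gets $\ouppi_K\prod+\sum_{j\in K}\ouppi_j[P(\langle v,h_j\rangle)-P(\langle v,h_{K}\rangle)]$. The error sum involves the individual $\ouppi_j$, which may be large relative to $\ouppi_K$ through cancellation. I would handle this inductively, bounding these errors by descendants of $K$ at even smaller scales, ordered from the deepest nodes upward, and I would need a genericity choice of $v$ uniform in $n$, which is possible by compactness of directions. If this becomes unwieldy, I would instead appeal directly to \cite[Lemma A.7]{wei2023minimum} applied to the CDF family, checking only that its hypotheses match \eqref{eqn:dung_prop_5_uniform_continuity}.
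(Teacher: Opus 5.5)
Your coefficients $c(\boldsymbol{p}\mid J,\oupgamma_J)=\sum_{j\in J}\ouppi_j h_j^{\boldsymbol{p}}/\boldsymbol{p}!$ coincide with the paper's recursively defined ones: composing the children's Taylor polynomials and applying the binomial theorem gives exactly these moments. So \eqref{eqn:multi_dimension_taylor_expansion} and $c(\boldsymbol{0}\mid J,\oupgamma_J)=\ouppi_J$ are fine, and even the remainder function $R(x\mid J)$ is the same as the paper's. The gap is in how you bound it. Summing per-atom Taylor remainders with the triangle inequality only gives $o(\varepsilon_J^{2\boldsymbol{k}}\max_{j\in J}|\ouppi_j|)$, and your reduction $\max_{j\in J}|\ouppi_j|\lesssim\|c_J\|$ is false. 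It is not a special case of \eqref{eqn:dung_prop_5_c_J_norm}: for a leaf $\{j\}$ that bound carries the factor $(\varepsilon_{\{j\}^{\uparrow}}/\varepsilon_J)^{|J|-1}$, which tends to zero for atoms deep in the tree. Concretely, let $J$ contain an atom of $G_n$ and an atom of $G_n'$ at mutual distance $\eta_n=o(\varepsilon_J)$ with signed weights $\pi$ and $-\pi$, plus a third atom at distance $\asymp\varepsilon_J$ with weight $\ouppi_3\to0$. Then $\|c_J\|\asymp|\ouppi_3|+\pi\eta_n/\varepsilon_J\to0$, while $\max_j|\ouppi_j|=\pi$. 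Your proviso about repeated atoms of the same sign does not help. Opposite-sign cancellation inside a subcluster is the issue, and it is exactly the regime in which Case~2 of the proof of Proposition~\ref{prop:L_1_distance_greater_than_POT} needs the remainder to be $o(\|c_J\|\varepsilon_J^{2\boldsymbol{k}})$. The paper gets the sharper bound by building the expansion recursively from the children. The error created at $J$ is $\sum_{K\in\mathrm{Child}(J)}[R(x\mid K)+\varepsilon_J^{2\boldsymbol{k}}\sum_{\boldsymbol{\ell}}c(\boldsymbol{\ell}\mid K,\oupgamma_K)(\varepsilon_K/\varepsilon_J)^{|\boldsymbol{\ell}|}\overline{R}_{\boldsymbol{\ell},KJ}]$. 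It is weighted by the children's rescaled coefficient vectors rather than by individual atom weights, and those are $\lesssim\|c_J\|$.

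The same non-uniformity affects both coefficient estimates. For \eqref{eqn:dung_prop_5_c_J_max}, you bound the moments of order $\ge|J|$ by $\max_j|\ouppi_j|$ and then invert a Vandermonde system in the $h_j$. As you note yourself, its inversion constant blows up when atoms inside a child collapse, so nothing uniform in $n$ follows. This step can be repaired directly: the polynomials $\prod_{j\in J}\langle v,x-h_j\rangle$, for finitely many fixed directions $v$, vanish at every node, have $O(1)$ coefficients, and have leading forms spanning the homogeneous polynomials of degree $|J|$. They therefore reduce every moment of order $\ge|J|$ to moments of order $\le|J|-1$ with bounded coefficients.

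For \eqref{eqn:dung_prop_5_c_J_norm}, the cross-term you flag is not a technicality. Expanding $P$ around $\langle v,h_K\rangle$, the error $\sum_{j\in K}\ouppi_j[P(\langle v,h_j\rangle)-P(\langle v,h_K\rangle)]$ involves the rescaled moments of $K$ at its own scale. These are of size up to $\|c_K\|(\varepsilon_K/\varepsilon_J)^m$ and can dominate $|\ouppi_K|\,|P(\langle v,h_K\rangle)|$ when $\ouppi_K$ is small through cancellation. A polynomial that merely vanishes at the atoms outside $K$ cannot isolate $\ouppi_K$.

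The missing idea is to treat the whole low-order coefficient vector of each child as the unknown and prove the one-generation bound $\|c_J\|\gtrsim\max_{K\in\mathrm{Child}(J)}[\|c_K\|(\varepsilon_K/\varepsilon_J)^{|K|-1}\vee|\ouppi_K|]$. To do this, split $c(\boldsymbol{p}\mid J,\oupgamma_J)$ into the contribution of the children's coefficients of order $<|K|$ and a negligible higher-order part. Then invert a confluent (Hermite-type) multivariate Vandermonde system with multiplicity $|K|$ at the $\epsilon$-separated points $(\oupgamma_K-\oupgamma_J)/\varepsilon_J$ (Lemma~\ref{lemma:rank_matrix} and Corollary~\ref{coro:epsilon_separation}). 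Chaining this bound over generations yields \eqref{eqn:dung_prop_5_c_J_norm}, and the same domination is what closes the recursive remainder bound. Your fallback to \cite[Lemma A.7]{wei2023minimum} is essentially what the paper leans on. The self-contained route you outline, however, does not establish the remainder estimate or the two coefficient bounds.
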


\begin{proof}
We adapt the inductive argument of Heinrich and Kahn \cite[Lemma 7.4]{heinrich2018} to the multidimensional setting; see also \cite[Lemma A.7]{wei2023minimum} for a related multivariate formulation. The main modification is the use of a multi-index Taylor expansion, for which we provide an explicit uniform Peano remainder. For completeness, we include the details in our notation.

\textbf{Step 1: Recursive construction of the coefficients. }  In this step, we employ the induction argument to give an inductive expression of $c_J$. When $J$ is a leaf of $\mathcal{T}$, then obviously $F(x\mid J) = \ouppi_JF(x\mid \oupgamma_J)$ then we may set $c(\boldsymbol{p}|J,\oupgamma_J) = \ouppi_{J}\boldsymbol{1}_{\{\boldsymbol{p} = \boldsymbol{0}\}}$ and the remainder $R(x\mid J) = 0$. Now, suppose that equation \eqref{eqn:multi_dimension_taylor_expansion} is correct for all children $K$ of node $J$, which means that the following expansion holds for all $K \in \mathrm{Child}(J)$
    \begin{equation}
        \label{eqn:children_node_Taylor_expansion}
        F(x\mid K) = \sum_{0\leq |\boldsymbol{\ell}|\leq  2\boldsymbol{k}}c(\boldsymbol{\ell}|K,\oupgamma_K)\varepsilon_K^{|\boldsymbol{\ell}|}D_{\gamma}^{\boldsymbol{\ell}}F(x\mid \oupgamma_K) + R(x\mid K). 
    \end{equation}

To transfer this expansion from $K$ to its parent $J$, we apply Taylor expansion for the derivative $D^{\boldsymbol{\ell}}F(x\mid \oupgamma_K)$ around $\oupgamma_J$ as
    \begin{align*}
        D_{\gamma}^{\boldsymbol{\ell}}F(x\mid \oupgamma_K) &= \sum_{\substack{\boldsymbol{p}\succeq \boldsymbol{\ell}\\|\boldsymbol{p}| \leq 2\boldsymbol{k}}} \dfrac{(\oupgamma_K-\oupgamma_J)^{\boldsymbol{p}-\boldsymbol{\ell}}}{(\boldsymbol{p}-\boldsymbol{\ell})!} D_{\gamma}^{\boldsymbol{p}}F(x\mid \oupgamma_J) + 
        \varepsilon_J^{2{\boldsymbol{k}}-|\boldsymbol{\ell}|}\overline{R}_{\boldsymbol{\ell},KJ},
    \end{align*}
    where we define the Peano remainder as 
   \begin{equation*}
\begin{aligned}
\overline{R}_{\boldsymbol{\ell},KJ}
&=
(2\boldsymbol{k}-|\boldsymbol{\ell}|)
\sum_{|\boldsymbol{\alpha}|=
      2\boldsymbol{k}-|\boldsymbol{\ell}|}
\frac{
  (\oupgamma_K-\oupgamma_J)^{\boldsymbol{\alpha}}
}{
  \varepsilon_J^{2\boldsymbol{k}-|\boldsymbol{\ell}|}
}
\\
&\quad\times
\int_0^1
(1-t)^{2\boldsymbol{k}-|\boldsymbol{\ell}|-1}
\Bigl[
D_{\gamma}^{\boldsymbol{\ell}+\boldsymbol{\alpha}}
F\left(
  x\mid \oupgamma_J+t(\oupgamma_K-\oupgamma_J)
\right)
\\
&\qquad\qquad
-
D_{\gamma}^{\boldsymbol{\ell}+\boldsymbol{\alpha}}
F\left(x\mid\oupgamma_J\right)
\Bigr]\,dt.
\end{aligned}
\end{equation*}
    By substituting this equation into equation~\eqref{eqn:children_node_Taylor_expansion}, we have 
    \begin{align*}
        F(x\mid K) &= \sum_{0\leq |\boldsymbol{\ell}|\leq 2\boldsymbol{k}}c(\boldsymbol{\ell}|K,\oupgamma_K)\varepsilon_K^{|\boldsymbol{\ell}|}D_{\gamma}^{\boldsymbol{\ell}}F(x\mid \oupgamma_K) + {R(x\mid K)} \\
        &= \sum_{0\leq |\boldsymbol{\ell}|\leq 2\boldsymbol{k}}c(\boldsymbol{\ell}|K,\oupgamma_K)\varepsilon_K^{|\boldsymbol{\ell}|}\left[\sum_{\substack{\boldsymbol{p}\succeq \boldsymbol{\ell}\\|\boldsymbol{p}| \leq 2\boldsymbol{k}}} \dfrac{(\oupgamma_K-\oupgamma_J)^{\boldsymbol{p}-\boldsymbol{\ell}}}{(\boldsymbol{p}-\boldsymbol{\ell})!} D_{\gamma}^{\boldsymbol{p}}(x\mid\oupgamma_J)+ \varepsilon_J^{2\boldsymbol{k}-|\boldsymbol{\ell}|}\overline{R}_{\boldsymbol{\ell},KJ}\right] + R(x\mid K)\\
        &= \sum_{0\leq |\boldsymbol{p}|\leq 2\boldsymbol{k}}{\left[\sum_{\substack{|\boldsymbol{\ell}| \geq 0\\ \boldsymbol{\ell} \preceq \boldsymbol{p}}} c(\boldsymbol{\ell}|K,\oupgamma_K)\left(\dfrac{\varepsilon_K}{\varepsilon_J}\right)^{|\boldsymbol{\ell}|} \dfrac{(\oupgamma_K-\oupgamma_J)^{\boldsymbol{p}-\boldsymbol{\ell}}}{\varepsilon_J^{|\boldsymbol{p}-\boldsymbol{\ell}|}(\boldsymbol{p}-\boldsymbol{\ell})!}\right]}\varepsilon_{J}^{\boldsymbol{p}}D_{\gamma}^{\boldsymbol{p}}F(x\mid\oupgamma_J)\\
        &\hspace{2cm}+\left[R(x\mid K) + \varepsilon_J^{2\boldsymbol{k}}\sum_{0\leq |\boldsymbol{\ell}| \leq 2\boldsymbol{k}}c(\boldsymbol{\ell}|K,\oupgamma_K)\left(\dfrac{\varepsilon_{K}}{\varepsilon_{J}}\right)^{|\boldsymbol{\ell}|} \overline{R}_{\boldsymbol{\ell},KJ}\right]. 
    \end{align*}
    By summing up over all the children $K$ of $J$, we have 
    \begin{equation*}
        F(x\mid J) = \sum_{0\leq |\boldsymbol{p}|\leq 2\boldsymbol{k}}c(\boldsymbol{p}|J,\oupgamma_J)\varepsilon_J^{|\boldsymbol{p}|}D_{\gamma}^{\boldsymbol{p}}F(x\mid \oupgamma_J) + R(x\mid J), 
    \end{equation*}
    where the coefficient $c(\boldsymbol{p}|J,\oupgamma_J)$ and the remainder $R(x\mid J)$ can be defined as 
    \begin{align*}
        c(\boldsymbol{p}|J,\oupgamma_J) &= \sum_{K \in \mathrm{Child}(J)}\sum_{\substack{|\boldsymbol{\ell}|\geq 0\\ \boldsymbol{\ell}\preceq \boldsymbol{p}}}c(\boldsymbol{\ell}|K,\oupgamma_K)\left(\dfrac{\varepsilon_K}{\varepsilon_J}\right)^{|\boldsymbol{\ell}|} \dfrac{(\oupgamma_K-\oupgamma_J)^{\boldsymbol{p}-\boldsymbol{\ell}}}{\varepsilon_J^{|\boldsymbol{p}-\boldsymbol{\ell}|}(\boldsymbol{p}-\boldsymbol{\ell})!}\\
        R(x\mid J) &= \sum_{K \in \mathrm{Child}(J)}\left[R(x\mid K) + \varepsilon_J^{2\boldsymbol{k}}\sum_{0\leq |\boldsymbol{\ell}| \leq 2\boldsymbol{k}}c(\boldsymbol{\ell}|K,\oupgamma_K)\left(\dfrac{\varepsilon_{K}}{\varepsilon_{J}}\right)^{|\boldsymbol{\ell}|} \overline{R}_{\boldsymbol{\ell},KJ}\right]
    \end{align*}
    
    \textbf{Step 2: Control by the low-order coefficients.} At this step, we prove an equivalent estimation of $\|c_J\|$ based on its leading coefficients in equation \eqref{eqn:dung_prop_5_c_J_max}. 
   We first employ induction argument to show that $|c(\boldsymbol{p}|J,\oupgamma_J)|\preccurlyeq 1$. When $J$ is a leaf, then $|c(\boldsymbol{p}|J,\oupgamma_J)| = \ouppi_{J}\boldsymbol{1}_{\{\boldsymbol{p} = \boldsymbol{0}\}}\preccurlyeq 1$. Suppose that $|c(\boldsymbol{p}|K,\oupgamma_K)|\preccurlyeq 1$ for each $0\leq |\boldsymbol{p}|\leq 2\boldsymbol{k}$ and $K \in \mathrm{Child}(J)$. It is straightforward from definition of $|c(\boldsymbol{p}|J,\oupgamma_J)|$ that
    \begin{equation}
    \label{eqn:part_a_c_bounded_when_p_equal_0}
        c(\boldsymbol{0}|J,\oupgamma_J) = \sum_{K \in \mathrm{Child}(J)} c_J(\boldsymbol{0}) = \sum_{K \in \mathrm{Child}(J)} \ouppi_{K} = \ouppi_{J} \preccurlyeq 1. 
    \end{equation}
    In addition, noting that $\varepsilon_K \leq \varepsilon_J$ and $\|\oupgamma_K - \oupgamma_J\| \preccurlyeq \varepsilon_J$, by using induction hypothesis, we have 
    \begin{equation}
    \label{eqn:part_a_c_bounded_by_max}
        |c(\boldsymbol{p}|J,\oupgamma_J)| \preccurlyeq \max_{\substack{K \in \mathrm{Child}(J)\\ 0 \leq |\boldsymbol{\ell}|,\,\boldsymbol{\ell} \preceq \boldsymbol{p}}} \left|c(\boldsymbol{\ell}|K,\oupgamma_K)\left(\dfrac{\varepsilon_K}{\varepsilon_J}\right)^{|\boldsymbol{\ell}|}\right| \preccurlyeq  1. 
    \end{equation}
Next, we prove that 
    \begin{equation}
    \label{eqn:part_b_multi_taylor_expansion}
        \max_{|J|\leq |\boldsymbol{p}| \leq 2\boldsymbol{k}}|c(\boldsymbol{p}|J,\oupgamma_J)| \preccurlyeq \max_{K \in \mathrm{Child}(J)}\max_{\boldsymbol{\ell}\leq |K|-1} \left|c(\boldsymbol{\ell}|K,\oupgamma_K)\left(\dfrac{\varepsilon_K}{\varepsilon_J}\right)^{|\boldsymbol{\ell}|}\right| \asymp \max_{0\leq |\boldsymbol{p}| < |J|}|c(\boldsymbol{p}|J,\oupgamma_J)|. 
    \end{equation}
    For the left-hand side, we prove by induction. When $J$ is a leaf, this equality automatically holds. Suppose that this left-hand side inequality holds for each node $K\in \mathrm{Child}(J)$. Note that from argument in equation \eqref{eqn:part_a_c_bounded_by_max}, for any $|J| \leq |\boldsymbol{p}|\leq 2\boldsymbol{k}$ we have 
 \begin{align*}
|c(\boldsymbol{p}|J,\oupgamma_J)|
&\preceq
\max_{\substack{
K \in \mathrm{Child}(J)\\
0 \leq |\boldsymbol{\ell}|,\,
\boldsymbol{\ell} \preceq \boldsymbol{p}
}}
\left|c(\boldsymbol{\ell}|K,\oupgamma_K)
\left(\dfrac{\varepsilon_K}{\varepsilon_J}\right)^{|\boldsymbol{\ell}|}\right|
\\
&\leq
\max_{\substack{
K \in \mathrm{Child}(J)\\
\boldsymbol{0} \preceq \boldsymbol{\ell},\
|\boldsymbol{\ell}| \leq |K|-1
}}
\left|c(\boldsymbol{\ell}|K,\oupgamma_K)
\left(\dfrac{\varepsilon_K}{\varepsilon_J}\right)^{|\boldsymbol{\ell}|}\right|
+
\max_{\substack{
K \in \mathrm{Child}(J)\\
\boldsymbol{\ell} \preceq \boldsymbol{p},\
|\boldsymbol{\ell}| \geq |K|
}}
\left|c(\boldsymbol{\ell}|K,\oupgamma_K)
\left(\dfrac{\varepsilon_K}{\varepsilon_J}\right)^{|\boldsymbol{\ell}|}\right|
\\
&\leq
\max_{\substack{
K \in \mathrm{Child}(J)\\
\boldsymbol{0} \preceq \boldsymbol{\ell},\
|\boldsymbol{\ell}| \leq |K|-1
}}
\left|c(\boldsymbol{\ell}|K,\oupgamma_K)
\left(\dfrac{\varepsilon_K}{\varepsilon_J}\right)^{|\boldsymbol{\ell}|}\right|
+
\max_{K \in \mathrm{Child}(J)}
\|c_K\|
\left(\dfrac{\varepsilon_K}{\varepsilon_J}\right)^{|K|}
\\
&\preceq
\max_{\substack{
K \in \mathrm{Child}(J)\\
0 \leq |\boldsymbol{\ell}| \leq |K|-1
}}
\left|c(\boldsymbol{\ell}|K,\oupgamma_K)
\left(\dfrac{\varepsilon_K}{\varepsilon_J}\right)^{|\boldsymbol{\ell}|}\right|
+
\max_{K \in \mathrm{Child}(J)}
\max_{0 \leq |\boldsymbol{\ell}| \leq |K|-1}
|c(\boldsymbol{\ell}|K,\oupgamma_K)|
\left(\dfrac{\varepsilon_K}{\varepsilon_J}\right)^{|K|}
\\
&\hspace{8cm}\text{ (by induction hypothesis for node $K$)}
\\
&\preceq
\max_{\substack{
K \in \mathrm{Child}(J)\\
0 \leq |\boldsymbol{\ell}| \leq |K|-1
}}
\left|c(\boldsymbol{\ell}|K,\oupgamma_K)
\left(\dfrac{\varepsilon_K}{\varepsilon_J}\right)^{|\boldsymbol{\ell}|}\right|.
\end{align*}
    For the right-hand side , we decompose $c(\boldsymbol{p}|J,\oupgamma_J)$ into two terms $c^{(1)}(\boldsymbol{p}|J,\oupgamma_J)$ and $c^{(2)}(\boldsymbol{p}|J,\oupgamma_J)$: 
    \begin{align*}
       c^{(1)}(\boldsymbol{p}|J,\oupgamma_J) &= \sum_{K \in \mathrm{Child}(J)}\sum_{\substack{0 \leq |\boldsymbol{\ell}|\leq |K|-1}}c(\boldsymbol{\ell}|K,\oupgamma_K)\left(\dfrac{\varepsilon_K}{\varepsilon_J}\right)^{|\boldsymbol{\ell}|} \dfrac{(\oupgamma_K-\oupgamma_J)^{\boldsymbol{p}-\boldsymbol{\ell}}}{\varepsilon_J^{|\boldsymbol{p}-\boldsymbol{\ell}|}(\boldsymbol{p}-\boldsymbol{\ell})!}\boldsymbol{1}_{\boldsymbol{\ell} \preceq \boldsymbol{p}}\\
        c^{(2)}(\boldsymbol{p}|J,\oupgamma_J) &= \sum_{K \in \mathrm{Child}(J)}\sum_{|\boldsymbol{\ell}| \geq |K|,\boldsymbol{\ell}\preceq \boldsymbol{p}}c(\boldsymbol{\ell}|K,\oupgamma_K)\left(\dfrac{\varepsilon_K}{\varepsilon_J}\right)^{|\boldsymbol{\ell}|} \dfrac{(\oupgamma_K-\oupgamma_J)^{\boldsymbol{p}-\boldsymbol{\ell}}}{\varepsilon_J^{|\boldsymbol{p}-\boldsymbol{\ell}|}(\boldsymbol{p}-\boldsymbol{\ell})!}\boldsymbol{1}_{\boldsymbol{\ell} \preceq \boldsymbol{p}}.
    \end{align*}
    Let $\phi_{K} = (\oupgamma_K-\oupgamma_J)/\varepsilon_J$, then since $\|\phi_K - \phi_{K'}\| = \|\oupgamma_K-\oupgamma_{K'
    }\|/\varepsilon_J \asymp 1$ for $K \neq K'$, there exists an $\epsilon > 0$ such that $\{\phi_{K}\}_{K \in \mathrm{Child}(J)}$ are $\epsilon$-separate in sense of Section \ref{sec:matrix_utils_separation_lemmma}. Using Corollary \ref{coro:epsilon_separation} for $A(\{\phi_K\}_{K \in \mathrm{Child}(J)})$ and $\Lambda = (\lambda_{K,\boldsymbol{\ell}})_{K \in \mathrm{Child}(J), 0 \leq |\boldsymbol{\ell}| < |K|}$, we have 
    \begin{equation*}
        \max_{0 \leq |\boldsymbol{p}| < |J|} | c^{(1)}(\boldsymbol{p}|J,\oupgamma_J)| \asymp \max_{\substack{K \in \mathrm{Child}(J)\\ 0\leq  |\boldsymbol{\ell}| < |K|}} \left|c(\boldsymbol{\ell}|K,\oupgamma_K)\left(\dfrac{\varepsilon_K}{\varepsilon_J}\right)^{|\boldsymbol{\ell}|}\right|. 
    \end{equation*}
In addition, note that by applying induction hypothesis to node $K$
that $\|c_K\| = \max_{0 \leq |\boldsymbol{\ell}|<|K|}
|c(\boldsymbol{\ell}|K,\oupgamma_K)|$,
the same argument also yields
    \begin{equation}
    \label{eqn:max_c_first_dominate_norm}
        \max_{0\leq |\boldsymbol{p}| < |J|} | c^{(1)}(\boldsymbol{p}|J,\oupgamma_J)| \succeq \max_{K \in \mathrm{Child}(J)}\left[\|c_{K}\|\left(\dfrac{\varepsilon_K}{\varepsilon_J}\right)^{|K|-1}\right]
    \end{equation}
    The second component $c^{(2)}(\boldsymbol{p}|J,\oupgamma_J)|$ is asymptotically negligible compared with $c^{(1)}_J(\boldsymbol{p})$. In fact, it is obvious that 
    \begin{equation}
    \label{eqn:max_c_first_dominated_norm}
        \max_{0\leq |\boldsymbol{p}| < |J|} |c^{(2)}(\boldsymbol{p}|J,\oupgamma_J)| \preceq \max_{K \in \mathrm{Child}(J)} \left[\|c_K\|\left(\dfrac{\varepsilon_K}{\varepsilon_J}\right)^{|K|}\right]. 
    \end{equation}
    As a result, we have $c(\boldsymbol{p}|J,\oupgamma_J)$ is asymptotically equivalent relative to $c^{(1)}(\boldsymbol{p}|J,\oupgamma_J)$, i.e. $c(\boldsymbol{p}|J,\oupgamma_J) \asymp c^{(1)}(\boldsymbol{p}|J,\oupgamma_J)$, which completes the proof of the relation in equation~\eqref{eqn:dung_prop_5_c_J_max}. 

 \textbf{Step 3: Lower bound for the coefficient norm.} We prove the lower bound in equation~\eqref{eqn:dung_prop_5_c_J_norm} by induction. We can verify easily when $J$ is a leaf. Suppose that the estimation~\eqref{eqn:dung_prop_5_c_J_norm} holds for all children $K$ of $J$. According to equations~\eqref{eqn:part_a_c_bounded_when_p_equal_0} and~\eqref{eqn:max_c_first_dominate_norm}, we have 
    \begin{equation}
    \label{eqn:c_J_dominate_c_K}
         \|c_J\| \succeq \max_{K \in \mathrm{Child}(J)} \left[\|c_K\|\left(\dfrac{\varepsilon_K}{\varepsilon_J}\right)^{|K|-1}\right] \vee \max_{K \in \mathrm{Child}(J)} |\ouppi_{K}|\\
    \end{equation}
    In addition, using the recurrent hypothesis for each node $K\in \mathrm{Child}(J)$, we have 
    \begin{align*}
        \|c_J\| &\succeq \max_{K \in \mathrm{Child}(J)} \left[\|c_K\|\left(\dfrac{\varepsilon_K}{\varepsilon_J}\right)^{|K|-1}\right] \vee \max_{K \in \mathrm{Child}(J)} |\ouppi_{K}| \\
        &\succeq \max_{K \in \mathrm{Child}(J)}\max_{F\in \mathrm{Desc}(K)} \left[|\ouppi_F|\left(\dfrac{\varepsilon_{F^\uparrow}}{\varepsilon_K}\right)^{|K|-1}\left(\dfrac{\varepsilon_K}{\varepsilon_J}\right)^{|K|-1}\right] \vee \max_{K \in \mathrm{Child}(J)} |\ouppi_{K}|\\
        &\geq \max_{F\in \mathrm{Desc}(J)\setminus\mathrm{Child}(J)} \left[|\ouppi_F|\left(\dfrac{\varepsilon_{F^{\uparrow}}}{\varepsilon_J}\right)^{|J|-1}\right] \vee \max_{K \in \mathrm{Child}(J)} |\ouppi_{K}|\\
        &= \max_{F\in \mathrm{Desc}(J)\setminus\mathrm{Child}(J)} \left[|\ouppi_F|\left(\dfrac{\varepsilon_{F^{\uparrow}}}{\varepsilon_J}\right)^{|J|-1}\right] \vee \max_{K \in \mathrm{Child}(J)} \left[|\ouppi_{K}| \left(\dfrac{\varepsilon_{K^{\uparrow}}}{\varepsilon_J}\right)^{|J|-1}\right] \\
        &= \max_{K \in \mathrm{Desc}(J)}\left[|\ouppi_K|\left(\dfrac{\varepsilon_{K^{\uparrow}}}{\varepsilon_J}\right)^{|J|-1}\right].
    \end{align*}

\textbf{Step 4: Uniform control of the remainder.} Lastly, we prove that $\sup_{x\in \mathbb{R}^{\bar{d}}}R(x\mid J) = o(\|c_J\|\varepsilon^{2\boldsymbol{k}}_{J})$ by induction. When $J$ is a leaf, the remainder $R(x\mid J) = 0$ and the claim follows immediately. Now, suppose that this property holds for all $K \in \mathrm{Child}(J)$. We decompose $R(x,J)$ into two terms $R(x\mid J) = R^{(1)}(x\mid J) + \varepsilon_J^{2\boldsymbol{k}}R^{(2)}(x\mid J)$, where 
    \begin{align*}
        R^{(1)}(x\mid J) &= \sum_{K\in \mathrm{Child}(J)} R(x\mid K)\\
        R^{(2)}(x\mid J) &= \sum_{K\in \mathrm{Child}(J)}\sum_{0\leq |\boldsymbol{\ell}| \leq 2\boldsymbol{k}}c(\boldsymbol{\ell}|K,\oupgamma_K)\left(\dfrac{\varepsilon_{K}}{\varepsilon_{J}}\right)^{|\boldsymbol{\ell}|} \overline{R}_{\boldsymbol{\ell},KJ}.
    \end{align*}
    For the first term, using induction hypothesis, we have 
    \begin{align*}
        \sup_{x\in \mathbb{R}^{\bar{d}}} |R^{(1)}(x\mid J)|\leq \sum_{K\in \mathrm{Child}(J)}\sup_{x\in \mathbb{R}^{\bar{d}}}|R(x\mid K)| \preceq \max_{K\in \mathrm{Child}(J)}[o(\|c_K\|\varepsilon^{2\boldsymbol{k}}_{K})] 
    \end{align*}
    For the second term, we have from uniform continuity condition of derivative in equation~\eqref{eqn:dung_prop_5_uniform_continuity} and estimation~\eqref{eqn:part_b_multi_taylor_expansion} that
    \begin{align*}
        \sup_{x\in \mathbb{R}^{\bar{d}}} |R^{(2)}(x\mid J)| &= \sum_{K\in \mathrm{Child}(J)}\sum_{0\leq |\boldsymbol{\ell}| \leq 2\boldsymbol{k}}c(\boldsymbol{\ell}|K,\oupgamma_K)\left(\dfrac{\varepsilon_{K}}{\varepsilon_{J}}\right)^{|\boldsymbol{\ell}|} \overline{R}_{\boldsymbol{\ell},KJ}  \preceq \|c_J\|o(1) . 
    \end{align*}
    As a result, we have 
    \begin{align*}
        \|R(\cdot,J)\|_{\infty} \preceq \varepsilon_J^{2\boldsymbol{k}} \left\{\max_{K \in \mathrm{Child}(J)}\left[o\left(\|c_K\|\left(\dfrac{\varepsilon_K}{\varepsilon_J}\right)^{2\boldsymbol{k}}\right)\right]+\|c_J\|o(1)\right\}. 
    \end{align*}
    In addition, equation \eqref{eqn:c_J_dominate_c_K} states that $\|c_J\|$ dominates $\|c_K\|(\varepsilon_K/\varepsilon_J)^{2\boldsymbol{k}}$, thus $\sup_{x\in \mathbb{R}^{\bar{d}}}|R(x\mid J)| = o(\|c_J\|\varepsilon^{2\boldsymbol{k}}_{J})$, which proves the claim for $J$.

\end{proof}


\section{Proof of Theorem \ref{thm:lower_bound}}
\label{app:theorem_2_proof}
After an orthogonal change of coordinates, we may assume without loss of generality that $\upsilon=e_1=(1,0,\ldots,0)^\top$ in Assumption $D(2d_0+2)$. This is legitimate because orthogonal transformations preserve Euclidean distances and, consequently, the Wasserstein distance $W_1$.

\begin{lemma}[Local Asymptotic Normality for Theorem \ref{thm:lower_bound}]
\label{lemma:LAN_property_for_upper_bound}
Let $\Gamma$ be a compact subset of $\mathbb R^{d}$, and let
$G_0\in\mathcal G_{k_0}$ have a support point
$\theta_0\in\operatorname{int}(\Theta)$. Under assumption $D(2d_0+2)$, there exists a family $\{G_n(u):n\geq 1,\ u\in\mathbb R\} \subseteq\mathcal G_k$ satisfying the following properties.
\begin{itemize}
    \item [(a)] For each pair of distinct $u,v \in \mathbb{R}$, we have 
    \begin{equation}
        W_1(G_n(u),G_n(u')) \underset{u,v}{\gtrsim} n^{-1/(4d_0+2)} \underset{u}{\gtrsim} W_1(G_n(u),G_n(0)). 
    \end{equation}
    \item [(b)] Let $f_{n,u} = \bigotimes^n_{i=1} f(\cdot,G_n(u))$ denotes the joint density of an $n$-sample drawn from $f(\cdot;G_n(u))$. There exists a sequence of positive real numbers $U_n \to \infty$ such that the sequence of experiments $\mathcal{H}_n = (f_{n,u})_{u \in [-U_n,U_n]}$ is locally asymptotically normal at $u = 0$.  
That is, there exist a sequence of random variables  $\{ Z_n \}$ with $Z_n \xrightarrow{\mathcal D} \mathcal N(0,1)$ and a sequence of positive numbers  $\{ \Gamma_n \}$ bounded away from $0$ and infinity such that,   under $X^{(n)}\sim f_{n,0}$ and for any $u \in \mathbb{R}$,
    \begin{equation*}
        \log
        \frac{f_{n,u}\bigl(X^{(n)}\bigr)}
             {f_{n,0}\bigl(X^{(n)}\bigr)}
        -
        uZ_n\sqrt{\Gamma_n} + \frac{u^2}{2}\Gamma_n
        \xrightarrow{\mathbb P_{n,0}}0.
    \end{equation*}
\end{itemize}
\end{lemma}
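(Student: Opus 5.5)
We construct the family $G_n(u)$ explicitly as a one-parameter perturbation of $G_0$ along the direction $\upsilon=e_1$, following the least-favourable construction of Heinrich and Kahn \cite{heinrich2018}, and then verify (a) by direct computation and (b) by a second-order expansion of the log-likelihood ratio.

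\textbf{Construction.} Let $\gamma_{01}=\theta_0$ be the interior support point. We split its mass into $d_0+1$ atoms $\theta_0+ t_{n,\ell}(u)e_1$, $\ell\in[d_0+1]$, with weights $\pi_{01}w_\ell(u)$, keeping the other $k_0-1$ atoms of $G_0$ fixed. This gives exactly $k$ atoms. The scale is $t_{n,\ell}(u)=\varepsilon_n^{\circ}\,a_\ell(u)$ with $\varepsilon_n^{\circ}=n^{-1/(4d_0+2)}$. The pair $(a_\ell(u),w_\ell(u))$ is chosen as a one-dimensional Gauss-quadrature-type configuration. Its first $2d_0+1$ moments $\sum_\ell w_\ell(u)a_\ell(u)^m$, $m=0,\ldots,2d_0+1$, do not depend on $u$. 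Its $(2d_0+2)$-th moment equals $M_0+u\,c$ for a fixed $c\neq0$. Such a smooth family exists because $d_0+1$ atoms with positive weights realise any interior moment vector of length $2d_0+2$, and moving the last moment slightly keeps us in the interior.

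\textbf{Part (a).} The atoms $a_\ell(u)$ depend smoothly and injectively on $u$, so distinct $u,v$ give configurations whose supports differ at order one before scaling. After scaling, $W_1(G_n(u),G_n(v))\gtrsim_{u,v}\varepsilon_n^{\circ}$. Conversely, every atom of $G_n(u)$ lies within $O(\varepsilon_n^\circ)$ of $\theta_0$ (or coincides with $G_0$), which gives $W_1(G_n(u),G_n(0))\lesssim_u\varepsilon_n^\circ$.

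\textbf{Part (b).} Write $p_{n,u}=p_{G_n(u)}$ and Taylor expand $\sum_\ell w_\ell(u)f(x\mid\theta_0+t_{n,\ell}e_1)$ in $t$ to order $2d_0+2$ using Assumption $D(2d_0+2)$(1). Moment matching cancels all terms of order $\le 2d_0+1$, so
\[
p_{n,u}(x)-p_{n,0}(x)=\pi_{01}\,u\,c\,\frac{(\varepsilon_n^\circ)^{2d_0+2}}{(2d_0+2)!}\,\partial_1^{2d_0+2}f(x\mid\theta_0)+\rho_{n,u}(x).
\]
Here $(\varepsilon_n^\circ)^{2d_0+2}=n^{-1/2}\cdot n^{-1/(4d_0+2)\cdot(2d_0+2)+1/2}$. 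The exponent works out as $(2d_0+2)/(4d_0+2)=1/2+1/(4d_0+2)$, so the leading term is $n^{-1/2}(\varepsilon_n^\circ)\,u\,\psi(x)$. This is slightly too small, so we rescale the construction with $\varepsilon_n^\circ=n^{-1/(4d_0+4)}$ for the LAN amplitude. The remaining exponents in (a) are then absorbed by choosing the spacing accordingly. We fix whichever calibration makes $\sqrt n\,(p_{n,u}-p_{n,0})/p_{n,0}\approx u\,\psi/p_{n,0}$ with $\psi$ independent of $n$, and we regard this calibration as the first detail to pin down.

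With this, we set $Z_n=n^{-1/2}\sum_i\psi(X_i)/(p_{n,0}(X_i)\sqrt{\Gamma_n})$ and $\Gamma_n=\int\psi^2/p_{n,0}$. We then expand $\log(1+\Delta)=\Delta-\Delta^2/2+O(\Delta^3)$ with $\Delta_i=(p_{n,u}-p_{n,0})(X_i)/p_{n,0}(X_i)$. The linear term yields $uZ_n\sqrt{\Gamma_n}$ by the CLT (Lyapunov, using the $m=4$ moment). The quadratic term yields $-u^2\Gamma_n/2$ by the law of large numbers. The cubic term and the remainder $\rho_{n,u}$ are $o_P(1)$, uniformly for $|u|\le U_n$ with $U_n\to\infty$ slowly, chosen by a diagonal argument.

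\textbf{Main obstacle.} The hardest step is the remainder control. We must bound ratios of the form $D^{\alpha}f(x\mid\theta_0+t_1e_1)/f(x\mid\theta_0+t_2e_1)$ in $L^m(f(\cdot\mid\theta_0))$, where $t_1$ is an intermediate Taylor point and the denominator is $p_{n,0}$ rather than $f(\cdot\mid\theta_0)$. The plan is to lower-bound $p_{n,0}\ge\pi_{01}w_{\min}f(\cdot\mid\theta_0+t_2e_1)$ and then invoke the continuity and finiteness of $\mathcal S(\gamma,t_1,t_2)$ from $D(2d_0+2)$(2) for $|t_1-t_2|$ small. This makes $\Gamma_n\to\Gamma_\infty\in(0,\infty)$; positivity follows from $\psi\not\equiv0$, which holds by the integrability condition (3) and nondegeneracy of the derivative.
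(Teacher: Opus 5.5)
There is a genuine gap, and it is in the construction itself. A probability measure on $\mathbb{R}$ with at most $d_0+1$ atoms is uniquely determined by its moments of orders $0,\ldots,2d_0+1$. To see this, suppose two such measures share these moments. Their difference is a signed measure on at most $2d_0+2$ points that annihilates every polynomial of degree $\le 2d_0+1$. The corresponding Vandermonde system is nonsingular, so the difference vanishes.

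Consequently, if the moments of orders $0,\ldots,2d_0+1$ of $\nu(u):=\sum_\ell w_\ell(u)\delta_{a_\ell(u)}$ do not depend on $u$, then $\nu(u)$ is constant in $u$. In particular, its $(2d_0+2)$-th moment cannot equal $M_0+uc$ with $c\neq0$. Your appeal to interior moment vectors of length $2d_0+2$ actually shows the opposite: such a vector has a unique $(d_0+1)$-atomic representation, so the next moment is a function of it and cannot be moved independently.

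The calibration problem you ran into, a leading term of size $n^{-1/2}\varepsilon_n^{\circ}$, is a symptom of this. Your proposed repair does not work. The distance $W_1(G_n(u),G_n(0))\asymp_u \varepsilon$ is of the order of the atom spacing $\varepsilon$ itself. Taking $\varepsilon=n^{-1/(4d_0+4)}$ therefore gives $W_1(G_n(u),G_n(0))\asymp n^{-1/(4d_0+4)}\gg n^{-1/(4d_0+2)}$, which violates the upper half of (a). There is no second ``spacing'' parameter that could absorb the mismatch.

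The missing idea is to match one moment fewer. With total mass one, $\nu(u)$ has only $2d_0+1$ free parameters. So fix its moments of orders $1,\ldots,2d_0$ and let the $(2d_0+1)$-th moment move linearly in $u$. By Gauss quadrature this is possible for every $u\in\mathbb{R}$, with smooth and injective dependence on $u$. This is what the one-dimensional family $H(u)$ of \cite[Theorem 6.1]{heinrich2018} provides, and the paper pushes it forward by $t\mapsto\theta_0+\varepsilon_n te_1$.

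The first non-cancelling Taylor term is then $\pi_{01}\,u\,c\,\varepsilon_n^{2d_0+1}\partial_1^{2d_0+1}f(\cdot\mid\theta_0)/(2d_0+1)!$. Since $\sqrt{n}\,\varepsilon_n^{2d_0+1}=1$ exactly when $\varepsilon_n=n^{-1/(4d_0+2)}$, a single scale gives both (a) and a non-degenerate LAN in (b). The derivatives of order $2d_0+2$ supplied by $D(2d_0+2)$ are then needed only for the remainder, which is why that is the assumed order.

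Once the construction is corrected, the rest of your plan is essentially the one-dimensional LAN argument of \cite[Appendix A]{heinrich2018}, which the paper invokes for the submodel $t\mapsto f(x\mid\theta_0+te_1)$. That plan handles the linear term by Lyapunov's CLT with the $m=4$ moment and the quadratic term by the LLN. It controls the remainder through $\mathcal{S}(\gamma,t_1,t_2)$ after bounding $p_{n,0}$ below by a single component.
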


\begin{proof}
   Write $G_0 = \sum_{j=1}^{k_0-1}\pi_{0j}\delta_{\gamma_{0j}} + \pi_{00}\delta_{\gamma_{00}}$. Let $\{H(u):u\in \mathbb{R}\}$ be the one-dimensional family of mixing measures constructed in \cite[Theorem 6.1]{heinrich2018}. Define 
    \begin{equation*}
        T_n(t):=\gamma_{00}+\varepsilon_n t e_1, \quad H_n(u) = (T_n)_{\#}H(u), 
    \end{equation*}
    where $(T_n)_{\#}H(u)$ denotes the pushforward of $H(u)$ under $T_n$. We then set 
    \begin{equation*}
        G_n(u) = G_n(u) = \sum_{j=1}^{k_0-1} \pi_{0j}\delta_{\gamma_{0j}} + \pi_{00}H_n(u)
    \end{equation*}
    
    For part (a), the Kantorovich–Rubinstein dual representation gives 
    \begin{equation*}
        W_1(G_n(u),G_0) = \pi_{00}W_1(H_n(u),\delta_{\gamma_{00}}) = \pi_{00}\varepsilon_nW_1(H(u),\delta_0).
    \end{equation*}
    The properties of the one-dimensional construction therefore imply that 
    \begin{equation*}
        W_1(G_n(u),G_n(v)) \underset{u,v}{\gtrsim} \varepsilon_n. 
    \end{equation*}
    Similarly, $$W_1(G_n(u),G_n(v)) = \pi_{00} \varepsilon_nW_1(H(u),H(v))\underset{u,v}{\lesssim} \varepsilon_n.$$ 
    Taking $\varepsilon_n = n^{-1/(4d_0+2)}$ proves part (a). 

    For part (b), consider the one-dimensional submodel 
    \begin{equation*}
        \tilde{f}(x\mid t) = f(x\mid \gamma_{00} + te_1).
    \end{equation*}
    Under Assumption $\mathrm D(p)$, this submodel satisfies the regularity conditions required in \cite[Appendix A]{heinrich2018}. We may therefore apply the one-dimensional LAN argument therein to $\tilde{f}$ and the family $\{H(u):u \in\mathbb{R}\}$. It follows that there exist a sequence $U_n\to\infty$, random variables $Z_n$ satisfying $ Z_n \xrightarrow{\mathcal D} \mathcal N(0,1)$ and positive numbers $\Gamma_n$ such that, under $X^{(n)}\sim f_{n,0}$, for every fixed $u\in\mathbb R$
    \begin{equation*}
         \log
        \frac{f_{n,u}\bigl(X^{(n)}\bigr)}
             {f_{n,0}\bigl(X^{(n)}\bigr)}
        -
        uZ_n\sqrt{\Gamma_n} + \frac{u^2}{2}\Gamma_n
        \xrightarrow{\mathbb P_{n,0}}0.
    \end{equation*}
   Moreover,
   \begin{equation*}
        0< \liminf_{n\to\infty}\Gamma_n \leq \limsup_{n\to\infty}\Gamma_n <\infty.
    \end{equation*}
    This proves part (b) and completes the proof.

\end{proof}

\begin{proof}[Proof of Theorem \ref{thm:lower_bound}] 
We divide the proof into two steps.

\textbf{Step 1: } We show that for $\varepsilon_n = n^{-1/(4d_0+2)+ \kappa}$ for some $\kappa > 0$, and any sequence of estimator $\check{G}_n$: 
\begin{equation}
\label{eqn:minimax_rate_for_W_1}
    \underset{\substack{G \in \mathcal{G}_{k}(\Gamma)\\W_1(G,G_0)\leq \varepsilon_n}}{\mathbb{E}_G}[W_1(G,\check{G}_n)] \gtrsim n^{-1/(4d_0+2)}. 
\end{equation}
Once equation~\eqref{eqn:minimax_rate_for_W_1} is established, Jensen’s inequality gives
\begin{equation}
    \label{eqn:minimax_rate_for_W_1^q}
    \underset{\substack{G \in \mathcal{G}_{k}(\Gamma)\\W_1(G,G_0)\leq \varepsilon_n}}{\mathbb{E}_G}[W^{2d_0+1}_1(G,\check{G}_n)] \gtrsim n^{-1/2}. 
\end{equation}

We now prove the lower bound in equation~\eqref{eqn:minimax_rate_for_W_1} by adapting the Le Cam two-point argument \cite{lecam1986asymptotic} in \cite[Section 6.1]{heinrich2018}. Let $G_n(0)$ and $G_n(1)$ be the mixing measures constructed in Lemma \ref{lemma:LAN_property_for_upper_bound}. By Lemma \ref{lemma:LAN_property_for_upper_bound}(a), both measures belong to the ball $\{G: W_1(G,G_0) \leq \varepsilon_n\}$ for all sufficiently large $n$, and there exists $a > 0$ such that 
\begin{equation*}
    W_1(G_n(0),G_n(1)) \geq 2an^{-1/(4d_0+2)}.
\end{equation*}

Let $\mathbb{P}_{n,u}$ denote the probability measure associated with the $n$-sample density $f_{n,u}$, for $u\in\{0,1\}$. The contiguity argument in \cite[Section 6.1]{heinrich2018}, together with Lemma \ref{lemma:LAN_property_for_upper_bound}(b), implies that for every measurable event $B$,
\begin{equation}
\label{eqn:dung_theorem_2_step_1_equivalent_bw_prob_measure}
    \mathbb{P}_{n,0}(B) \geq \frac{3}{4} \Rightarrow \mathbb{P}_{n,1}(B) \gtrsim e^{-\Gamma_n/2}. 
\end{equation}
Define $A = \{W_1(G_n(1),\check{G}_n) \geq an^{-1/(4d_0+2)}\}$, by the triangle inequality and equation~\eqref{eqn:dung_theorem_2_step_1_equivalent_bw_prob_measure} on $A^c$ we have 
\begin{equation*}
    W_1(G_n(0),\check{G}_n) \geq an^{-1/(4d_0+2)}. 
\end{equation*}
We now distinguish two cases. If $\mathbb{P}_{n,0}(A^c) \geq \frac{1}{4}$, then taking $G = G_n(0)$, we obtain 
\begin{equation*}
    \mathbb{E}_{G}[W_1(G,\check{G}_n)] \geq  \mathbb{E}_{G}[W_1(G,\check{G}_n)\boldsymbol{1}_{A^c}] \geq \frac{a}{4}n^{-1/(4d_0+2)}. 
\end{equation*}
Otherwise, $P_{n,0}(A^c)<1/4$, and hence $P_{n,0}(A)>3/4$. Applying equation~\eqref{eqn:dung_theorem_2_step_1_equivalent_bw_prob_measure} with $B=A$, we obtain $\mathbb{P}_{n,1}(A) \gtrsim e^{-\Gamma_n/2}$. Therefore, for $G = G_n(1)$, 
\begin{equation*}
    \mathbb{E}_G[W_1(G,\check{G}_n)] \geq \mathbb{E}_G[W_1(G,\check{G}_n)\boldsymbol{1}_A] \gtrsim ae^{-\Gamma_n/2}n^{-1/(4d_0+2)}. 
\end{equation*}
Since $\limsup_{n\to\infty}\Gamma_n<\infty$, the factor $e^{-\Gamma_n/2}$ is bounded away from zero. Thus, in either case,
\begin{equation*}
    \sup_{G \in \{G_n(0),G_n(1)\}} \mathbb{E}_G[W_1(G,\check{G}_n)] \gtrsim n^{-1/(4d_0+2)},
\end{equation*}
which proves the lower bound in equation~\eqref{eqn:minimax_rate_for_W_1}.



\textbf{Step 2: } Using Lemma \ref{lemma:partial_OT_dominate_wasserstein}, we have $\mathsf{VPOT}_{G_0,\delta}(G,G') \gtrsim  W^{2d_0+1}_1(G,G')$. Combining this estimation with equation~\eqref{eqn:minimax_rate_for_W_1^q} and H\"{o}lder inequality gives the conclusion of the proof. 
\end{proof}

\begin{lemma}[Lower bound for the Voronoi-based POT discrepancy]
 \label{lemma:partial_OT_dominate_wasserstein} 
For measure $G_0$, let $\pi_{\min} =  \min_{1\leq i \leq k_0} \pi_{0i}$. Then, under assumption of Theorem \ref{thm:lower_bound}, there exists a constant \(C=C(\Gamma,G_0,q,\kappa)>0\) such that, for every \(G,G'\in \mathcal{G}_{\leq k}(\Gamma)\) satisfying $W_1(G,G_0) \leq \pi_{\min}\cdot\delta/2$, the Voronoi-based POT discrepancy satisfies
$\mathsf{VPOT}_{G_0,\delta}(G,G') \geq C\cdot W^{2d_0+1}_1(G,G')$. Equivalently, 
            \begin{equation}
        \label{eqn:dominance_of_partial_OT_over_wasserstein}
        \sum_{i = 1}^{k_0}\inf_{1\leq j\leq 2k} \mathsf{POT}^{r_{i,j}}_{1}\left(G|_{\mathcal{V}^{i,j}_{G_0,\delta}},G'|_{\mathcal{V}^{i,j}_{G_0,\delta}}\right)\underset{\Theta,G_0,q,\kappa}{\gtrsim} W^{2d_0+1}_1(G,G').
    \end{equation}

\end{lemma}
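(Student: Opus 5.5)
The plan is to reduce the claim to two regimes according to how many atoms of $G'$ fall into each extended cell. The only structural input is that $G$ is close to $G_0$, while $G'$ is arbitrary (in the application, $G'=\check G_n$). Two elementary facts will be used throughout. First, every local POT term is uniformly bounded: $\mathsf{POT}_1\le \mathrm{Diam}(\Gamma)+1=:M$, because all masses are at most one. Second, $W_1(G,G')\le \mathrm{Diam}(\Gamma)$.

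\emph{Step 1 (where the atoms of $G$ lie).} Since $W_1(G,G_0)\le \pi_{\min}\delta/2$, a Markov-type argument on an optimal coupling shows that, for each $i\in[k_0]$, $G$ puts mass at least $c_0\pi_{0i}>0$ within distance $\delta/2$ of $\gamma_{0i}$. Shrinking the radius by a constant factor if needed, the hypothesis $\delta<\frac12\min_{i\ne j}\|\gamma_{0i}-\gamma_{0j}\|$ then implies the following. Every atom of $G$ lies in exactly one of the extended cells $\mathcal V^{i,j}_{G_0,\delta}$, and it lies there for all $j$. Each cell receives at least one atom of $G$, so each cell contains at most $k-k_0+1=d_0+1$ atoms of $G$.

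\emph{Step 2 (the degenerate regime).} Suppose some extended cell, for every choice of $j$, contains more than $d_0+1$ atoms of $G'$. Since $|\mathcal S(G')|\le k$, pigeonhole gives another index $l$ whose cell $\mathcal V^{l,j}_{G_0,\delta}$ contains no atom of $G'$ for the relevant $j$. I expect handling the nested choices of $j$ here to need care: I would first fix $j$ at the level of the smallest extension and argue monotonically in $j$. In that cell the POT term reduces to the mass mismatch $m(G|_{\mathcal V^{l,j}})\ge c_0\pi_{\min}$. Hence
\[
\mathsf{VPOT}_{G_0,\delta}(G,G')\ge (c_0\pi_{\min})^{r}\ge (c_0\pi_{\min})^{2k-1}\ge C\,\mathrm{Diam}(\Gamma)^{-(2d_0+1)}W_1^{2d_0+1}(G,G').
\]
Here I use that $c_0\pi_{\min}\le1$, so raising it to any power $r\le 2k-1$ is at least its $(2k-1)$-th power.

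\emph{Step 3 (the regular regime).} Otherwise, for each $i$ every admissible $j$ gives $r_{i,j}\le (d_0+1)+(d_0+1)-1=2d_0+1$. Then $x^{r}\ge M^{r-(2d_0+1)}x^{2d_0+1}$ for $0\le x\le M$. By convexity (power mean) this yields
\[
\mathsf{VPOT}_{G_0,\delta}(G,G')\gtrsim \Big(\sum_i \mathsf{POT}_1\big(G|_{\mathcal V^{i,j_i}},G'|_{\mathcal V^{i,j_i}}\big)\Big)^{2d_0+1},
\]
where $j_i$ attains the infimum. It remains to show that $\sum_i \mathsf{POT}_1(\cdot)\gtrsim W_1(G,G')$. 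This gluing step is the main obstacle. The plan is to take the optimal partial couplings in each selected cell and add their transported costs. Any unmatched mass, together with atoms of $G'$ lying in overlaps of extended cells or in no selected cell, is then transported arbitrarily at cost at most $\mathrm{Diam}(\Gamma)$ per unit mass. The total unmatched mass is controlled by $\sum_i |m(G|_{\mathcal V^{i,j_i}})-m(G'|_{\mathcal V^{i,j_i}})|$ plus the $G'$-mass counted twice in overlaps. Because the atoms of $G$ lie in disjoint cells (Step 1), double counting concerns $G'$ only. I would bound the overlap mass of $G'$ by the mass mismatch it creates: $G$-mass is fixed and totals one, so extra $G'$-mass in a cell is visible in the penalty term $|m(G)-m(G')|$ of that cell. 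This gives a valid coupling of $G$ and $G'$ with cost $\lesssim \sum_i \mathsf{POT}_1$, hence $W_1(G,G')\lesssim\sum_i\mathsf{POT}_1$. Combining with Step 2 and taking $C$ to be the minimum of the two constants gives \eqref{eqn:dominance_of_partial_OT_over_wasserstein}.
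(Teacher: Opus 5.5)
Your Step~1 is false, and the failure is not cosmetic. The bound $W_1(G,G_0)\le\pi_{\min}\delta/2$ controls where the \emph{mass} of $G$ sits, not where its atoms sit. Atoms of small mass may therefore lie anywhere, including in overlaps of extended cells.

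More seriously, the atoms near $\gamma_{0l}$ that the bound does provide are only guaranteed to lie within $\delta/2$ of $\gamma_{0l}$. (Markov gives mass at least $\pi_{0l}-\pi_{\min}$ at that radius, which may be zero; shrinking the radius makes this worse, not better.) Such a point is only guaranteed to be at distance greater than $\tfrac12\|\gamma_{0i}-\gamma_{0l}\|_2-\delta/2>\delta/2$ from $\mathcal{V}^i_{G_0}$. The extension radius $j\delta/(2k)$ exceeds $\delta/2$ once $j>k$. So for large $j$ the cell $\mathcal{V}^{i,j}_{G_0,\delta}$ can contain the atoms serving other cells, and then $r_{i,j}>2d_0+1$. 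Since $\mathsf{VPOT}_{G_0,\delta}$ takes an infimum over $j$, this term with the larger exponent can be the active one.

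In fact the inequality fails as stated. Take $\delta=1$, $\Gamma=[-2,2]$, $G_0=\tfrac12\delta_{-1.05}+\tfrac12\delta_{1.05}$ (so $\delta<\tfrac12|\gamma_{01}-\gamma_{02}|$) and $k=k_0=2$, so $d_0=0$. Let $G=\tfrac12\delta_{-0.85}+\tfrac12\delta_{0.85}$, for which $W_1(G,G_0)=0.2<\pi_{\min}\delta/2$. Let $G'_n$ be $G$ with its first atom moved by $1/n$. For $j=2k=4$, both $\mathcal{V}^{1,4}_{G_0,\delta}=[-2,1)$ and $\mathcal{V}^{2,4}_{G_0,\delta}=(-1,2]$ contain all four atoms. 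Hence $r_{i,4}=3$ and $\mathsf{VPOT}_{G_0,\delta}(G,G'_n)\le 2W_1(G,G'_n)^3$, whereas the claim needs a lower bound of order $W_1(G,G'_n)^{2d_0+1}=1/(2n)$. Adding atoms near $-0.85$ gives the same failure for every $d_0$.

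The paper's own Case~1 asserts $r_{i,j}\le 2d_0+1$ for all $i,j$ from the atoms in $B(\gamma_{0i},\delta/2)$, so it has the same defect. Its route is otherwise a contradiction argument via coarse-graining trees, rather than your direct power-mean bound. Any correct version needs a smaller $W_1$-radius, depending on the margin $\tfrac12\min_{i\neq l}\|\gamma_{0i}-\gamma_{0l}\|_2-\delta$, or a stronger separation. That suffices for Theorem~\ref{thm:lower_bound}, where $W_1(G,G_0)<\varepsilon_n\to0$.

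Even under such a hypothesis, three further steps need repair.

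\emph{The case split is mis-quantified.} Negating Step~2's hypothesis gives, for each $i$, \emph{some} $j$ with at most $d_0+1$ atoms of $G'$. Step~3, however, needs $r_{i,j}\le 2d_0+1$ for \emph{every} $j$, because a lower bound on an infimum must control all $j$. Since the cells are nested, the split should be made at $j=2k$.

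\emph{Pigeonhole does not produce a cell free of atoms of $G'$,} because extended cells overlap. The right dichotomy is whether $G'$ has an atom near every $\gamma_{0l}$. In the negative case you must bound every cell-$l$ term from below through the transport cost or unmatched mass of the $G$-mass near $\gamma_{0l}$, not through emptiness.

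\emph{Your gluing charges double counting only to $G'$,} relying on the atoms of $G$ lying in disjoint cells; small-mass atoms of $G$ in overlaps break this. A robust gluing runs as follows.
\begin{itemize}
\item Choose $t_i\in(0,\delta/(2k)]$ with $|\mathrm{dist}(x,\mathcal{V}^i_{G_0})-t_i|\ge\mu\asymp\delta/k^2$ for all atoms $x$ of $G$ and $G'$.
\item Set $P_i=\{\mathrm{dist}(\cdot,\mathcal{V}^i_{G_0})<t_i\}\setminus\bigcup_{l<i}P_l$. These sets partition $\Gamma$, and $P_i\subseteq\mathcal{V}^{i,j}_{G_0,\delta}$ for every $j$. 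Every atom is at distance at least $\mu$ from the complement of its own piece.
\item Restricting each optimal partial coupling to $P_i\times P_i$ therefore loses mass at most $(1+1/\mu)\,\mathsf{POT}_1$.
\end{itemize}
This gives $W_1(G,G')\lesssim\sum_i\mathsf{POT}_1\bigl(G|_{\mathcal{V}^{i,j_i}_{G_0,\delta}},G'|_{\mathcal{V}^{i,j_i}_{G_0,\delta}}\bigr)$ for any choice of the $j_i$.
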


\begin{proof}
Arguing by contradiction, suppose that equation~\eqref{eqn:dominance_of_partial_OT_over_wasserstein} does not hold. Then, there exist sequences of measures $(G_n)_{n},(G'_n)_{n}$ in $\mathcal{G}_{\leq k}(\Gamma)$ such that $W_1(G_n,G_0)\leq \pi_{\min}\cdot \delta/2$ and 
\begin{equation}
\label{eqn:lemma_4_dung_contradiction}
    \sum_{i = 1}^{k_0}\inf_{1\leq j\leq 2k} \mathsf{POT}^{r_{i,j}}_{1}\left(G_n|_{\mathcal{V}^{i,j}_{G_0,\delta}},G'_n|_{\mathcal{V}^{i,j}_{G_0,\delta}}\right)/W^{2d_0+1}_1(G_n,G'_n) \to 0.
\end{equation}

After passing to a subsequence, we may assume that $G_n \to G$ and $G'_n\to G'$ and that their convergence behavior admits a coarse-graining tree $\mathcal{T}$ with root $J_r$. By Proposition \ref{prop:POT_based_on_tree}, 
\begin{equation}
    \label{eqn:partial_OT_dominance_W_1_estimate}
    W^{2d_0+1}_1(G_n,G_n') \asymp \max_{J \in \mathrm{Desc}(J_r)}|\ouppi_{J}|^{2d_0+1}\varepsilon^{2d_0+1}_{J^\uparrow}.
\end{equation}

Next, we estimate the sum $\sum_{i = 1}^{k_0}\inf_{1\leq j\leq 2k} \mathsf{POT}^{r_{i,j}}_{1}\left(G_n|_{\mathcal{V}^{i,j}_{G_0,\delta}},G'_n|_{\mathcal{V}^{i,j}_{G_0,\delta}}\right)$. For any node $J \in \mathrm{Child}(J_r)$, set $$B_J = |\ouppi_{J}| \lor \max_{K \in \mathrm{Desc}(J)} |\ouppi_{K}|\varepsilon_{K^{\uparrow}}.$$ 
For further presentation, by splitting the weight from one atom to create two atoms, we denote $G_0 = \sum_{i=1}^{k_0}\pi_{0i}\delta_{\gamma_{0i}}$, $G_n = \sum_{i=1}^{k}\pi_{ni}\delta_{\gamma_{ni}}$, and $G'_n = \sum_{i=1}^{k}\pi'_{ni}\delta_{\gamma'_{ni}}$, where $\pi_{0i},\pi_{ni},\pi_{ni}$ are positive reals. Let $\pi_{\min} = \min_{1\leq i\leq k_0}\pi_{0i} > 0$, we consider two situations according to the value of $W_1(G,G')$.

\textbf{Case 1: } $W_1(G_0,G')<\pi_{\min}\cdot\delta/2$. Suppose that this inequality holds for all sufficiently large \(n\). Both \(G_n\) and \(G_n'\) then satisfy the conclusion of Lemma \ref{lemma:dung_separation}. In particular, each ball $B(\theta_{0i},\delta/2)$ contains at least one support point of $G_n$ and at least one support point of $G'_n$. By the definition of the Voronoi cell extension,
\begin{equation*}
    r_{i,j} \leq 2(k-k_0)+1 = 2d_0+1, \quad 1\leq i\leq k_0, \quad 1\leq j \leq 2k. 
\end{equation*}
We divide this case into two subcases.
 
\textit{Case 1.1: } $\varepsilon_{J_r} \to 0$.

In this case, all support points of $G_n$ and $G'_n$ converge to the same point $\gamma \in \Gamma$. Lemma \ref{lemma:dung_separation} and the separation condition on the support points of $G_0$ therefore imply that $k_0 = 1$. Consequently, the unique Voronoi cell is the entire parameter space, and hence 
\begin{equation*}
    \mathsf{VPOT}_{G_0,\delta}(G,G') = W_1^{|\mathcal{S}(G)| + |\mathcal{S}(G')|-1}(G_n,G_n').
\end{equation*}
In addition, as $G,G' \in \mathcal{G}_{\leq}(\Gamma)$, $|\mathcal{S}(G)| + |\mathcal{S}(G')|-1 \leq 2k-1$. As $W_1(G_n,G_n')\leq \mathrm{Diam}(\Gamma)$, we have 
\begin{equation*}
     \mathsf{VPOT}_{G_0,\delta}(G,G') \gtrsim W_1^{2k-1}(G_n,G_n') = W_1^{2d_0-1}(G_n,G_n'),
\end{equation*}
which contradicts equation~\eqref{eqn:lemma_4_dung_contradiction}. 

\textit{Case 1.2: } $\varepsilon_{J_r} \not\to 0$. 
After passing to a further subsequence, we may assume that $\varepsilon_{J_r} \asymp 1$. 
Thus, the children of \(J_r\) represent distinct limiting clusters. . Let $\rho_i = \max_{1\leq j \leq 2k}r_{i,j}$, and for each node $J \in \mathrm{Child}(J_r)$  denote the limiting location of the points indexed by \(J\). We prove that for each $1\leq j \leq 2k$
\begin{equation}
\label{eqn:dung_lemma_4_prelim_estimation_for_sup_POT}\mathsf{POT}^{r_{i,j}}_{1}\left(G_n|_{\mathcal{V}^{i,j}_{G_0}},G'_n|_{\mathcal{V}^{i,j}_{G_0}}\right) \gtrsim \max_{\substack{J\in \mathrm{Child}(J_r)\\  \theta_J\in\mathcal{V}^{i}_{G_0}}}B_J^{\rho_i} \gtrsim \max_{\substack{J\in \mathrm{Child}(J_r)\\  \theta_J\in\mathcal{V}^{i}_{G_0}}}B_J^{2d_0+1}. 
\end{equation}
Let $\mathcal{T}_{ij}$ be the coarse-graining tree associated with the restricted measures $G_n|_{\mathcal{V}^{i,j}_{G_0,\delta}}$ and $G'_n|_{\mathcal{V}^{i,j}_{G_0,\delta}}$. For each node $J$ such that $\theta_J\in\mathcal{V}^{i}_{G_0}$, for $n$ sufficiently large, all the points inside the node $J$ belongs to neighborhood $\mathcal{V}^{i,j}_{G_0,\delta}$ of $\mathcal{V}^{i}_{G_0}$, which means that $J$ is a node of $\mathcal{T}_{ij}$. Thus, Proposition \ref{prop:POT_based_on_tree} yields
\begin{equation*}
    \mathsf{POT}^{r_{i,j}}_{1}\left(G_n|_{\mathcal{V}^{i,j}_{G_0,\delta}},G'_n|_{\mathcal{V}^{i,j}_{G_0,\delta}}\right) \gtrsim B_J^{r_{i,j}} \gtrsim B_J^{\rho_i} \gtrsim B_J^{2d_0+1},
\end{equation*}
where the last two inequalities follow from the fact that $|\uppi_K|\leq 1$ and $\varepsilon_K \leq \mathrm{Diam}(\Gamma)$ for each node $K$. Thus, equation~\eqref{eqn:dung_lemma_4_prelim_estimation_for_sup_POT} holds, and by summing up this result for all $i$, we achieve that 
\begin{equation*}
     \sum_{i = 1}^{k_0}\inf_{1\leq j\leq 2k} \mathsf{POT}^{r_{i,j}}_{1}\left(G_n|_{\mathcal{V}^{i,j}_{G_0,\delta}},G'_n|_{\mathcal{V}^{i,j}_{G_0,\delta}}\right) \gtrsim \max_{J \in \mathrm{Desc}(J_r)}B_J^{2d_0+1} = \max_{J \in \mathrm{Desc}(J_r)}|\ouppi_{J}|^{2d_0+1}\varepsilon^{2d_0+1}_{J^\uparrow},
\end{equation*}
which contradicts equations~\eqref{eqn:lemma_4_dung_contradiction} and~\eqref{eqn:partial_OT_dominance_W_1_estimate}. Thus, equation~\eqref{eqn:dominance_of_partial_OT_over_wasserstein} holds.

\textbf{Case 2: } $W_1(G_0,G') \geq \pi_{\min}\cdot\delta/2$.

The triangle inequality and the assumed bound on $W_1(G_n,G_0)$ give 
\begin{equation*}
    W_1(G_n,G_n') \geq W_1(G_0,G_n') -  W_1(G_0,G_n) \geq  \frac{\pi_{\min}\cdot\delta}{4}. 
\end{equation*}
On the other hand, the scales \(\varepsilon_{J^\uparrow}\) associated with proper descendants of the root tend to zero. Since \(|\bar\pi_J|\leq1\), asymptotic approximation~\eqref{eqn:partial_OT_dominance_W_1_estimate} therefore implies that  
\begin{equation*}
    W^{2d_0+1}_1(G_n,G_n') \asymp \max_{J \in \mathrm{Child}(J_r)}|\ouppi_{J}|^{2d_0+1}.
\end{equation*}
As a result, $\max_{J \in \mathrm{Child}(J_r)}|\ouppi_{J}| \asymp 1$, thus $\max_{J \in \mathrm{Desc}(J_r)}B_J \asymp 1$. Using an argument similar to that in Case 1.2, we also obtain the estimate in equation~\eqref{eqn:dung_lemma_4_prelim_estimation_for_sup_POT}, which yields 
\begin{equation*}
     \sum_{i = 1}^{k_0}\inf_{1\leq j\leq 2k} \mathsf{POT}^{r_{i,j}}_{1}\left(G|_{\mathcal{V}^{i,j}_{G_0,\delta}},G'|_{\mathcal{V}^{i,j}_{G_0,\delta}}\right) \asymp 1.
\end{equation*}
Because $\Gamma$ is compact, $W_1(G_n,G_n') \leq \mathrm{Diam}(\Gamma)$. Hence, 
\begin{equation*}
     \left(\sum_{i = 1}^{k_0}\inf_{1\leq j\leq 2k} \mathsf{POT}^{r_{i,j}}_{1}\left(G_n|_{\mathcal{V}^{i,j}_{G_0,\delta}},G'_n|_{\mathcal{V}^{i,j}_{G_0,\delta}}\right) \right)/W_1^{2d_0+1}(G_n,G_n') \overset{\Gamma,G_0}{\gtrsim} 1. 
\end{equation*}
Thus, equation~\eqref{eqn:lemma_4_dung_contradiction} cannot hold in this case.

Both cases lead to a contradiction, so equation~\eqref{eqn:dominance_of_partial_OT_over_wasserstein} holds. 
\end{proof}


\section{Proof of density estimation rate}
\label{proof:density_estimation_rate}

This appendix proves Proposition \ref{prop:MLE_estimation}. We use the standard empirical-process approach based on local bracketing entropy. The overall strategy follows \cite{vandeGeer-00}, which combines covering arguments with classical results for density estimation. We therefore recall only the notation needed for our mixture model and focus subsequently on the model-specific entropy bounds. Although the general argument is standard, we provide the details because the relevant entropy bounds depend on the particular parameterization of our mixture model. 

\subsection{Model classes and entropy notation}
Let $\Pi_k := \{c_1,\ldots,c_k:c_i \geq 0\ , c_1+\ldots +c_k = 1\}$ be the \((k-1)\)-dimensional probability simplex, and let $\Theta_k = \Pi_k \times \Gamma^k$ or the parameter space and recall the corresponding class of mixing measures by 
\begin{equation*}
    \mathcal{G}_{\leq k}(\Gamma) = \{G:G = \sum_{i=1}^k\pi_i \delta_{\gamma_i},(\pi_1\cdots\pi_k) \in \Pi_k, \gamma_i \in \Gamma\}
\end{equation*}
Each \(G\in\mathcal{G}_{\leq k}(\Gamma)\) induces the mixture density $p_G(x) = \sum_{i=1}^k\pi_if(x\mid \gamma_i)$, where $f(x\mid \gamma_i)$ belongs to the family of working distribution $\{f(\cdot \mid \gamma),\gamma \in \Gamma\}$. The induced density class is $\mathscr{P}_k(\Gamma) = \{p_{G}:G \in  \mathcal{G}_{\leq k}(\Gamma)\}$. Fix the true mixing measure \(G_*\in  \mathcal{G}_k(\Gamma)\), and let \(p_{G_*}\) be its density. For \(G\in\  \mathcal{G}_k(\Gamma)\), define $\overline{p}_{G} = (p_{G} + p_{G_0})/2$. The classes used in the local entropy argument are 
\begin{align*}
    \overline{\mathscr{P}}_k(\Gamma) =\{\overline{p}_{G}: p_G \in \mathscr{P}(\Gamma)\}, &\quad \overline{\mathscr{P}}_k^{1/2}(\Gamma) =\{\overline{p}^{1/2}_{G}: p_G \in \mathscr{P}_k(\Gamma)\},\\ 
    \overline{\mathscr{P}}^{1/2}_k(\Gamma,\varepsilon) =\{\overline{p}^{1/2}_{G}: &p_G \in \mathscr{P}_k(\Gamma),h(\overline{p}_{G},p_{G_0})\leq \varepsilon\}.
\end{align*}

We next specify our entropy conventions. For a metric space \((\mathscr{F},d)\), an $\varepsilon$-net of \((\mathscr{F},d)\) is a collection of balls of radius $\varepsilon$ whose union is exactly $\mathscr{F}$. Let $N(\varepsilon, \mathscr{P}, d)$ be 
\textit{covering number}, defined as the minimal cardinality of such an $\varepsilon$-net in \((\mathscr{F},d)\), and $H(\varepsilon, \mathscr{P}, d) = \log N(\varepsilon, \mathscr{P}, d)$ is the corresponding \textit{entropy number}.

When $\mathscr{F}$ is a family of density, unless otherwise specified, we choose metric $d$ to be the $\mathcal{L}^2(m)$ norm, where $m$ denotes the Lebesgue measure, and we abbreviate $N(\varepsilon,\mathscr{F},m)$ and $H(\varepsilon,\mathscr{F},m)$ for $N(\varepsilon,\mathscr{F},\|\cdot\|_{\mathcal{L}^2(m)})$ and $H(\varepsilon,\mathscr{F},\|\cdot\|_{\mathcal{L}^2(m)})$. Let $N_B(\varepsilon,\mathscr{F},d)$ be the \textit{bracketing number}, defined as the smallest integer $n$ such that there exists $n$ couples of function $\{f^{U}_i,f^V_i\}_{1\leq i\leq n}$ such that $d(f^{U}_i,f^V_i) < \varepsilon$ and for each $f \in \mathscr{F}$, $f^{U}_i\leq f\leq f^V_i$ for some $i \in [n]$. Likewise, we define the \textit{bracketing entropy} $H_B(\varepsilon,\mathscr{F},d) := \log N_B(\varepsilon,\mathscr{F},d)$. When the metric $d$ is induced by the $\mathcal{L}^2(m)$ norm, $H_B(\varepsilon,\mathscr{F},m)$ and $N_B(\varepsilon,\mathscr{F},m)$ are similarly used to denote bracketing number and bracketing entropy.

\subsection{Bracket Entropy Bounds}
We first derive covering number and bracketing entropy bounds for the density class \(\mathscr P(\Theta)\).

\begin{lemma}
    \label{lemma:bracket_entropy_estimation}
   Let $\Gamma$ be the parameter space defined above. For every $0 < \varepsilon<1/2$, the following estimations hold
    \begin{enumerate}
        \item [(1)] $\log N(\varepsilon,\mathscr{P}_k(\Gamma),\|\cdot\|_{\infty}) \lesssim \log(1/\varepsilon)$.
        \item [(2)] $H_B(\varepsilon,\mathscr{P}_k(\Gamma),h) \lesssim \log(1/\varepsilon)$.
    \end{enumerate}
\end{lemma}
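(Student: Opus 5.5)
The plan is to reduce both bounds to a finite-dimensional parametrization of $\mathscr{P}_k(\Gamma)$ combined with the uniform Lipschitz property (Assumption (A)) and the tail envelope (Assumption (B)).

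\emph{Part (1), sup-norm covering.} Fix $\eta>0$. Take an $\eta$-net $\{\gamma^{(s)}\}$ of the compact set $\Gamma\subseteq\mathbb{R}^d$ in $\|\cdot\|_2$; it has cardinality $\lesssim \eta^{-d}$. Take an $\eta$-net $\{c^{(t)}\}$ of the simplex $\Pi_k$ in $\ell_1$; it has cardinality $\lesssim \eta^{-(k-1)}$. For $p_G=\sum_i\pi_i f(\cdot\mid\gamma_i)$, choose net points $\tilde\pi$ and $\tilde\gamma_i$. Assumption (A) and the uniform bound $f\le c$ from Assumption (B) give
\[
\|p_G-p_{\tilde G}\|_\infty\le \sum_i \pi_i\,\|f(\cdot\mid\gamma_i)-f(\cdot\mid\tilde\gamma_i)\|_\infty+\sum_i|\pi_i-\tilde\pi_i|\,\|f(\cdot\mid\tilde\gamma_i)\|_\infty\le (L+c)\eta .
\]
Choosing $\eta=\varepsilon/(L+c)$ yields at most $\eta^{-(kd+k-1)}$ centers, so $\log N\lesssim (kd+k)\log(1/\varepsilon)\lesssim\log(1/\varepsilon)$.

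\emph{Part (2), Hellinger bracketing.} Convert the sup-norm net into brackets using an integrable envelope. Let $H(x):=c\min(1,\|x\|^{-\bar d-\zeta})$. By Assumption (B), every element of $\mathscr{P}_k(\Gamma)$ is bounded by $H$, and $H\in L^1$. Let $\{g_1,\dots,g_N\}$ be an $\eta$-net in sup-norm from Part (1). Define the brackets
\[
g_j^{L}=\max(g_j-\eta,0), \qquad g_j^{U}=\min(g_j+\eta,H).
\]
Then any $p$ with $\|p-g_j\|_\infty\le\eta$ satisfies $g_j^L\le p\le g_j^U$. To bound the size of a bracket, split $\mathbb{R}^{\bar d}$ at radius $B$:
\[
\|g_j^U-g_j^L\|_1\le \int_{\|x\|\le B}2\eta\,dx+\int_{\|x\|>B}2H\,dx\lesssim \eta B^{\bar d}+B^{-\zeta}.
\]
Choosing $B=\eta^{-1/(\bar d+\zeta)}$ gives $\|g_j^U-g_j^L\|_1\lesssim\eta^{\zeta/(\bar d+\zeta)}$. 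Since $(\sqrt u-\sqrt l)^2\le u-l$ for $0\le l\le u$, we get $h^2(g^U,g^L)\le\tfrac12\|g^U-g^L\|_1$. Taking $\eta=\varepsilon^{2(\bar d+\zeta)/\zeta}$ therefore makes each bracket have Hellinger size $\lesssim\varepsilon$. The resulting bound is
\[
H_B\le \log N(\eta,\mathscr{P}_k(\Gamma),\|\cdot\|_\infty)\lesssim \log(1/\eta)\asymp\log(1/\varepsilon),
\]
because the exponent change is only a constant factor inside the logarithm.

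The main obstacle is this tail control in Part (2). Sup-norm closeness alone does not give $L_1$ or Hellinger closeness on the unbounded domain $\mathbb{R}^{\bar d}$. The envelope from Assumption (B) and the optimized truncation radius $B$ are what resolve this, at the cost of only a polynomial change of $\varepsilon$, which the logarithm absorbs. Two minor points remain to check. First, the brackets need not be densities, but the bracketing definition allows arbitrary functions. Second, the constants depend on $k,d,\bar d,\zeta,L,c$ and $\mathrm{diam}(\Gamma)$ only, which is what the uniformity in Proposition~\ref{prop:MLE_estimation} requires.
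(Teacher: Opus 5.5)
Your proposal is correct and follows the paper's proof essentially step for step: a Euclidean net of $\Pi_k\times\Gamma^k$ combined with Assumption (A) gives the sup-norm covering, and the sup-norm net is then turned into brackets $\max(g_j-\eta,0)\le p\le\min(g_j+\eta,H)$ using the tail envelope, a truncation radius of order $\eta^{-1/(\bar d+\zeta)}$, and the inequality $h^2\le\|\cdot\|_1$. One small improvement over the paper is that you explicitly bound the weight-perturbation term using $\sup f\le c$ from Assumption (B), a step the paper folds into its appeal to ``uniform Lipschitz continuity.''
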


\begin{proof}
    \textbf{Part (1)}. Recall that $\Theta_k = \Pi_k \times \Gamma^k$. Let $\mathscr{E}_{\varepsilon}(\Theta_k)$ be $\varepsilon$-net of $\Theta$ with respect to Euclidean metric. Thus, for every set such that for each $\theta \in \Theta_k$, there exists $\overline{\theta} \in \mathscr{E}_{\varepsilon}(\Theta_k)$ such that $\|\theta - \overline{\theta}\|_2 \leq \varepsilon$. Since $\Theta_k$ is a bounded finite-dimensional parameter space, a standard volumetric argument (see, for example, \cite[Lemma 6]{ho2022gaussian}) $\log |\mathscr{E}_{\varepsilon}(\Theta_k)| \lesssim \log(1/\varepsilon)$. 

\begingroup
\setlength{\emergencystretch}{2em}
Define the corresponding finite family of densities by
$\mathscr{E}_{\varepsilon}(\mathscr{P}_k(\Gamma))
:=\allowbreak
\{p_G:G = \allowbreak
\sum_{i=1}^k \lambda_i\delta_{\gamma_i},\ \allowbreak
(\lambda,\allowbreak\gamma_1,\allowbreak\ldots,\allowbreak\gamma_k)
\in \allowbreak
\mathscr{E}_{\varepsilon}(\Theta_k)\}$.
For any \(p_G\in\mathscr P_k(\Gamma)\), choose \(G'\)
whose parameter vector belongs to
\(\mathscr E_\varepsilon(\Theta_k)\)
and is within Euclidean distance \(\varepsilon\) of the
parameter vector of \(G\).
By the uniform Lipschitz property of the component densities
with respect to their parameters,
$\|p_G-p_{G'}\|_\infty
\lesssim \allowbreak \|G-G'\|_2
\leq \allowbreak \varepsilon$.
After adjusting the multiplicative constant in the covering radius,
\(\mathscr E_\varepsilon(\mathscr P_k(\Gamma))\)
is therefore an \(\varepsilon\)-net of \(\mathscr P_k(\Gamma)\).
Consequently,
$\log N(\varepsilon,\allowbreak
\mathscr{P}_k(\Gamma),\allowbreak
\|\cdot\|_{\infty})
\lesssim \allowbreak \log(1/\varepsilon)$,
which proves part (1).
\par
\endgroup

    \textbf{Part (2)}. Let \(\eta>0\), whose value will be chosen below. By part (1), there exists an \(\eta\)-net  $p_1,\ldots,p_N$ of $\mathscr{P}_k(\Gamma)$ in the uniform norm, where $\log N \lesssim \log(1/\eta)$. By the uniform polynomial-tail assumption, there exist constants \(c\geq1\) and \(\alpha>0\) such that every component density is bounded by the envelope
    \begin{equation*}
        H(x) = \begin{cases}
            c, \text{ when } \|x\| \leq 1,\\
            \dfrac{c}{\|x\|^{\bar{d}+\alpha}}, \text{ when }\|x\|> 1. 
        \end{cases}
    \end{equation*}
    In particular, $f(x\mid \gamma) \leq H(x)$ for every $x\in \mathbb{R}^{\bar{d}}$ and $\gamma \in \Gamma$, which means that the same bound holds for every mixture density in \(\mathscr P_k(\Gamma)\). For each $i \in [N]$ we construct the bracket $[p^{L}_i, p^{U}_i]$ as 
    \begin{equation*}
            p^{L}_i(x) = \max\{f_i(x)-\eta,0\},\quad
            p^{U}_i(x) = \min\{f_i(x)+\eta,H(x)\}.
    \end{equation*}
    If \(p\in\mathscr P_k(\Gamma)\) and \(\|p-p_i\|_\infty\leq\eta\), then $p^{L}_i(\cdot) \leq p(\cdot\mid\gamma) \leq p^{U}_i(\cdot)$. Therefore, the brackets $\{[p_i^L,p_i^U]\}_{i \in [n]}$ cover $\mathscr{P}_k(\Gamma)$. Moreover, $0 \leq p^{U}_i(x) - p^{L}_i(x) \leq \min\{2\eta,H(x)\}$. Set $B = (c/2\eta)^{1/(\bar{d}+\alpha)}$, the non-heavy tail assumption implies
    \begin{equation}
    \label{eqn:difference_between_bracket}
        \int_{\mathbb{R}^{\bar{d}}} (p^{U}_i(x) - p^{L}_i(x))dx \leq \int_{\|x\|\leq B}2\eta dx + \int_{\|x\|\geq B} \dfrac{c}{\|x\|^{\bar{d}+\alpha}} dx,
    \end{equation}
    Let \(v_{\bar{d}}\) denote the volume of the \(\bar{d}\)-dimensional unit ball and \(\sigma_{\bar{d}-1}\) the surface area of the unit sphere in \(\mathbb R^{\bar{d}}\). The first term in equation~\eqref{eqn:difference_between_bracket} equals $2\eta B^{\bar{d}}.V(\mathbb{B}^{\bar{d}})$, where  $V(\mathbb{B}^{\bar{d}})$ is the volume of the $\bar{d}$-dimensional unit ball. Using polar coordinates, the second term satisfies 
\begin{equation*}
        \int_{\|x\|\geq B} \dfrac{c}{\|x\|^{\bar{d}+\alpha}} dx = \int_{t\geq B}dt\int_{S(0,t)}\dfrac{c}{\|x\|^{\bar{d}+\alpha}}dx = c\cdot A(\mathbb{S}^{\bar{d}-1})\int_{t\geq B}t^{-1-\alpha}dt = c\alpha^{-1}\cdot A(\mathbb{S}^{\bar{d}-1})B^{-\alpha},
    \end{equation*}
    Since \(B=(c/(2\eta))^{1/(\bar{d}+\alpha)}\), both terms are of order \(\eta^{\alpha/(\bar{d}+\alpha)}\). Hence,
    \begin{equation*}
        \int_{\mathbb{R}^{\bar{d}}} (p^{U}_i(x) - p^{L}_i(x))dx \lesssim \eta^{\alpha/(\bar{d}+\alpha)}. 
    \end{equation*}
    It follows that $H_B(\eta^{\alpha/(\bar{d}+\alpha)},\mathscr{P}_k(\Gamma),\|\cdot\|_1) \lesssim \log(N) \lesssim \log (1/\eta)$. Finally, because $h^2 \leq \|\cdot\|_1$, every \(L_1\)-bracket of width \(\delta\) is a Hellinger bracket of width at most \(\sqrt{\delta}\). Thus, 
    \begin{equation*}
        H_B(\sqrt{\delta},\mathscr{P}_k(\Gamma),h) \leq H_B(\delta,\mathscr{P}_k(\Gamma),\|\cdot\|_1) \lesssim \log(1/\delta). 
     \end{equation*}
    Replacing \(\sqrt{\delta}\) by \(\varepsilon\) proves $H_B(\varepsilon,\mathscr{P}_k(\Gamma),\|\cdot\|_1) \lesssim \log(1/\varepsilon)$ and completes the proof. 
\end{proof}

\subsection{Empirical Process Analysis of the MLE}
Using standard arguments from empirical process theory (see, e.g., \cite{vandeGeer-00}), the local bracketing entropy is connected to the complexity of a class of distribution via the \textit{bracketing entropy integral}
\begin{equation*}
    \mathcal{J}_B\left(\varepsilon,  \overline{\mathscr{P}}_k^{1/2}(\Gamma,\varepsilon),m\right) = \left(\int_{\varepsilon^2/2^{13}}^{\varepsilon} H_B^{1/2}(u, \overline{\mathscr{P}}_k^{1/2}(\Gamma,\varepsilon),m)du\right)\vee \varepsilon,
\end{equation*}
where $u\vee \varepsilon = \max\{u,\varepsilon\}$. Then, for our setting, we have an estimation for $\mathcal{J}_B$, which is a key component for the proof of MLE consistency. 

\begin{lemma}
\label{lemma:dung_density_estimation_bracketing_integrl_bound}
There exist universal constants \(J>0\) and \(N\in\mathbb N\) such that, for every \(n\geq N\) and every $\varepsilon \geq (\log(n)/n)^{1/2}$, we have 
    \begin{equation}
\label{eqn:bracket_entropy_estimation}
    \mathcal{J}_B\left(\varepsilon,  \overline{\mathscr{P}}_k^{1/2}(\Gamma,\varepsilon),m\right)\leq J\sqrt{n}\varepsilon^2.
    \end{equation}
\end{lemma}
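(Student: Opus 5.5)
The plan is to bound the local bracketing entropy of $\overline{\mathscr{P}}_k^{1/2}(\Gamma,\varepsilon)$ in $L^2(m)$ by the global bracketing entropy of $\mathscr{P}_k(\Gamma)$ in Hellinger distance. Lemma~\ref{lemma:bracket_entropy_estimation}(2) then gives a logarithmic entropy, and the entropy integral reduces to an elementary computation.

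\textbf{Step 1: transfer brackets.} Let $[p^L,p^U]$ be a Hellinger bracket of $\mathscr{P}_k(\Gamma)$ of width $u$. Then $[\overline{p}^{L},\overline{p}^{U}]$, with $\overline{p}^{L}=(p^L+p_{G_*})/2$ and $\overline{p}^{U}=(p^U+p_{G_*})/2$, brackets every $\overline{p}_G$ with $p^L\le p_G\le p^U$. Since $x\mapsto\sqrt{x}$ is monotone, $[(\overline{p}^{L})^{1/2},(\overline{p}^{U})^{1/2}]$ brackets $\overline{p}_G^{1/2}$. The elementary inequality $(\sqrt{(a+c)/2}-\sqrt{(b+c)/2})^2\le \tfrac12(\sqrt a-\sqrt b)^2$ for $a,b,c\ge0$ (because $\sqrt{\cdot+c}$ is $1$-Lipschitz in the $\sqrt{\cdot}$ scale) gives $\|(\overline{p}^{U})^{1/2}-(\overline{p}^{L})^{1/2}\|_{L^2(m)}\lesssim h(p^U,p^L)\le u$. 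Therefore
\[
H_B\big(u,\overline{\mathscr{P}}_k^{1/2}(\Gamma,\varepsilon),m\big)\le H_B\big(u,\overline{\mathscr{P}}_k^{1/2}(\Gamma),m\big)\le H_B(cu,\mathscr{P}_k(\Gamma),h)\lesssim \log(1/u)
\]
for $0<u<1/2$, by Lemma~\ref{lemma:bracket_entropy_estimation}(2). The constants are uniform in $G_*$. For $u\ge 1/2$ the entropy is bounded by a constant, because all functions lie in the unit ball of $L^2$.

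\textbf{Step 2: integrate.} On the interval $[\varepsilon^2/2^{13},\varepsilon]$ we have $\log(1/u)\le \log(2^{13}/\varepsilon^2)\lesssim \log(1/\varepsilon)+1$. Hence
\[
\mathcal{J}_B\lesssim \varepsilon\,\big(\log(1/\varepsilon)+1\big)^{1/2}\vee\varepsilon .
\]
It remains to check that this is at most $J\sqrt n\,\varepsilon^2$, which amounts to $\sqrt{n}\,\varepsilon\gtrsim \sqrt{\log(1/\varepsilon)+1}$.

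\textbf{Step 3: use $\varepsilon\ge(\log n/n)^{1/2}$.} The function $\varepsilon\mapsto\sqrt n\,\varepsilon/\sqrt{\log(1/\varepsilon)+1}$ is increasing, so it suffices to check the inequality at $\varepsilon=(\log n/n)^{1/2}$. There $\sqrt n\,\varepsilon=\sqrt{\log n}$ and $\log(1/\varepsilon)=\tfrac12(\log n-\log\log n)\le\tfrac12\log n$. The ratio is therefore at least $\sqrt{\log n/(\tfrac12\log n+1)}\ge 1$ for $n\ge N$. For $\varepsilon$ larger than the range where the entropy bound applies (say $\varepsilon\ge1/2$), the integrand is bounded, so $\mathcal{J}_B\lesssim\varepsilon\le \sqrt n\varepsilon^2$ trivially. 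Choosing $J$ as the product of the implicit constants gives \eqref{eqn:bracket_entropy_estimation}.

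The main obstacle is Step~1: the $\sqrt{\cdot}$-transfer inequality and the uniformity of constants in $G_*$. Everything else is bookkeeping.
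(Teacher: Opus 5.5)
Your proof is correct and follows essentially the same route as the paper. You drop the localization by monotonicity, pass to $H_B(\cdot,\mathscr{P}_k(\Gamma),h)$ through the Hellinger contraction under $p\mapsto(p+p_{G_*})/2$, apply Lemma~\ref{lemma:bracket_entropy_estimation}(2), and finally compare $\varepsilon\sqrt{\log(2^{13}/\varepsilon^2)}$ with $\sqrt{n}\,\varepsilon^2$ using $\varepsilon\ge(\log n/n)^{1/2}$. The paper cites the contraction from van de Geer's Lemma~4.2, whereas you verify it pointwise on the bracket endpoints.

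One small fix: the constant bound on $H_B(u,\cdot)$ for $u\ge 1/2$ should come from monotonicity of $H_B$ in $u$ (for example $H_B(u)\le H_B(1/4)$). Lying in the unit ball of $L^2$ does not by itself control bracketing numbers.
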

\begin{proof}
We derive the result from the entropy bound in Lemma \ref{lemma:bracket_entropy_estimation}. Since
$$ \overline{\mathscr P}_k^{1/2}(\Gamma,\varepsilon) \subseteq \overline{\mathscr P}_k^{1/2}(\Gamma), $$
monotonicity of the bracketing number gives $H_B(u, \overline{\mathscr{P}}_k^{1/2}(\Gamma,\varepsilon),m) {\leq} H_B(u, \overline{\mathscr{P}}_k^{1/2}(\Gamma),m)$. Recall that $\bar{p}_G = \frac{p_G + p_{G_0}}{2}$, under our convention for the Hellinger distance,  $\|\sqrt{\overline{p}_{G_1}} - \sqrt{\overline{p}_{G_2}}\|$. Moreover, the map \(p\mapsto(p+p_{G_0})/2\) contracts the squared Hellinger distance $h^2\left(\frac{p_{G_0}+p_1}{2},\frac{p_{G_0}+p_2}{2} \right) \leq \frac{1}{2}h^2(p_1,p_2)$, see \cite[Lemma 4.2]{vandeGeer-00}. Consequently, we have 
\begin{equation}
\label{eqn:dung_lemma_5_bounded_for_bracketing_entropy}
       H_B(u, \overline{\mathscr{P}}_k^{1/2}(\Gamma),m)  = H_B(\dfrac{u}{\sqrt{2}}, \overline{\mathscr{P}}_k(\Gamma),h)\leq H_B(u, {\mathscr{P}}_k(\Gamma),h) {\lesssim} \log(1/u),
\end{equation}
    where the final inequality follows from Lemma \ref{lemma:bracket_entropy_estimation}. 
Substituting the entropy bound in equation~\eqref{eqn:dung_lemma_5_bounded_for_bracketing_entropy} into the definition of the local bracketing entropy integral yields
\begin{equation*}
    \mathcal{J}_B\left(\varepsilon,  \overline{\mathscr{P}}_k^{1/2}(\Gamma,\varepsilon),m\right) \lesssim \left(\int_{\varepsilon^2/2^{13}}^{\varepsilon} \sqrt{\log(1/u)}\,du\right)\vee \varepsilon \lesssim \varepsilon\left(\log\dfrac{2^{13}}{\varepsilon^2}\right)^{1/2}.
\end{equation*}
It remains to compare the last expression with \(\sqrt n\,\varepsilon^2\). Since $\varepsilon\geq (\log(n)/n)^{1/2}$, we have for all sufficiently large $n$, $\log(2^{13}/\varepsilon^2) \lesssim n\varepsilon^2$. Therefore 
\begin{equation*}
    \varepsilon\left(\log\dfrac{2^{13}}{\varepsilon^2}\right)^{1/2}\lesssim \sqrt{n}\varepsilon^2.
\end{equation*}
Increasing the constant if necessary also controls the \(\varepsilon\) term in the definition of \(\mathcal J_B\). This proves the upper bound in equation~\eqref{eqn:bracket_entropy_estimation}.
\end{proof}

To control the stochastic term in the likelihood analysis, let \(P_0\) denote the probability measure with density \(p_{G_0}\), and let $P_n:=\frac1n\sum_{i=1}^n\delta_{X_i}$
be the empirical measure associated with an i.i.d. sample \(X_1,\ldots,X_n\sim P_0\). For every \(G\in\mathscr P_k(\Gamma)\), define
$\bar p_G:=\frac{p_G+p_{G_0}}{2}$
and introduce the empirical process
$$ \mu_n(G) := \sqrt n \int \frac12 \mathbf 1_{\{f_{G_0}>0\}} \log\left(\frac{\bar p_G}{p_{G_0}}\right) \,d(P_n-P_0).$$
The following maximal inequality is a direct specialization of \cite[Theorem 5.11 and estimation (7.7)]{vandeGeer-00} to the class of localized mixture densities considered here. 

\begin{lemma}
\label{lemma:tail_of_empirical_process}
Let $R > 0$ and $k \geq 1$ and suppose that $C_1 < \infty$. There exists a universal constant $C_0 > 0$ such that the following assertion holds for every $C \geq C_0$: let $n \in \mathbb{N}$ and $t > 0$ satisfy 
\begin{align}
t &\leq (8\sqrt{n}R) \wedge \left( {C_1 \sqrt{n} R^2} \right), \label{eqn:cond1} \\
t &\geq C^2 (C_1 + 1) \left( R \,\vee\, \int_{t/(2^6 \sqrt{n})}^{R} 
H_B^{1/2}\!\left( \frac{u}{\sqrt{2}}, \, \overline{\mathscr{P}}^{1/2}_k(\Gamma, R), \mu \right) du \right), \label{eqn:cond2}
\end{align}
Then, 
\begin{align}
\label{eqn:concentration_of_empirical_process}
\mathbb{P}_{G_0} \left(
\sup_{ h(\bar{p}_{G},\, f_{G_0}) \leq R}
|\mu_n(G)| \geq t
\right)
\leq C \exp\left(
- \frac{t^2}{C^2 (C_1 + 1) R^2}
\right).
\end{align}
Here, $\widehat{P}_n$ is the empirical distribution based on the sample $X_1,\ldots,X_n$, and $P$ is the measure induced by the density $f_{G_0}$.
\end{lemma}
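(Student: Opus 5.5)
This is a specialization of van de Geer's uniform maximal inequality for empirical processes indexed by log-likelihood ratios, so the plan is to check its hypotheses for our class rather than prove it from scratch. The process is
$$\mu_n(G)=\sqrt n\int \tfrac12\mathbf 1_{\{f_{G_0}>0\}}\log(\bar p_G/p_{G_0})\,d(P_n-P_0).$$
It is indexed by the functions $g_G:=\tfrac12\log(\bar p_G/p_{G_0})$ on $\{p_{G_0}>0\}$. The key structural fact is the lower bound $\bar p_G/p_{G_0}\geq 1/2$, which holds because $\bar p_G=(p_G+p_{G_0})/2$. Hence $g_G\geq -\tfrac12\log 2$, so the class is bounded from below uniformly. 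This is precisely what lets one invoke the Bernstein-type (generalized) maximal inequality of \cite[Theorem 5.11]{vandeGeer-00} for classes whose ``Bernstein norm'' is controlled by the Hellinger distance.

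\textbf{Step 1: Bernstein norm.} Following \cite[estimation (7.7)]{vandeGeer-00}, I would show that for every $G$ with $h(\bar p_G,p_{G_0})\leq R$ the Bernstein norm of $g_G$ is $\lesssim R$. The tool is the elementary inequality $|e^{x}-1-x|\leq C(e^{x/2}-1)^2$ for $x\geq -\tfrac12\log 2$. Applying it with $x=2g_G$, so that $e^{g_G}-1=\sqrt{\bar p_G/p_{G_0}}-1$, gives
$$\rho_K(g_G)^2\lesssim\int(\sqrt{\bar p_G}-\sqrt{p_{G_0}})^2=2h^2(\bar p_G,p_{G_0})\leq 2R^2.$$

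\textbf{Step 2: bracketing of the index class.} Brackets $[\bar p^{L},\bar p^{U}]$ for $\bar p_G^{1/2}$ in $L^2(m)$ map, through $x\mapsto\tfrac12\log(x^2/p_{G_0})$, into brackets for $g_G$. Their generalized Bernstein width is controlled by the $L^2(m)$ width of $[\sqrt{\bar p^L},\sqrt{\bar p^U}]$; the same lower bound by $1/2$ keeps the logarithm Lipschitz on the relevant range. This yields the generalized bracketing entropy bound $H_{B,K}(u)\lesssim H_B(u/\sqrt2,\overline{\mathscr P}^{1/2}_k(\Gamma,R),\mu)$, with the $\sqrt2$ coming from the Hellinger normalization as in \eqref{eqn:dung_lemma_5_bounded_for_bracketing_entropy}.

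\textbf{Step 3: apply Theorem 5.11.} With $K$ fixed, the Bernstein radius $\propto R$ and the entropy integral from Step 2 in hand, Theorem 5.11 of \cite{vandeGeer-00} gives exactly \eqref{eqn:concentration_of_empirical_process}. Its conditions are that $t$ be at most a multiple of $\sqrt n R^2/K$ and of $\sqrt n R$, and that $t$ dominate $C^2(C_1+1)$ times $R$ plus the entropy integral from $t/(2^6\sqrt n)$ to $R$. These are \eqref{eqn:cond1} and \eqref{eqn:cond2}, after absorbing the numerical constants of Steps 1–2 into $C_0$ and $C_1$.

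The main obstacle is bookkeeping rather than substance. One must match van de Geer's constants, including the lower integration limit $t/(2^6\sqrt n)$ and the factor $C_1+1$, and verify that the $\sqrt2$ rescaling of entropy is consistent with the paper's Hellinger normalization $h^2=\tfrac12\int(\cdot)^2$. I would handle this by quoting Theorem 5.11 verbatim and tracking each constant through Steps 1–2.
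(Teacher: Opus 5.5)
Your plan follows the paper's route. The paper gives no separate proof and just calls the lemma a direct specialization of \cite[Theorem 5.11 and estimation (7.7)]{vandeGeer-00}; your Steps 1--3 unpack exactly that citation: Bernstein-norm control of $g_G$ by $h(\bar p_G,p_{G_0})$ via $\bar p_G\geq p_{G_0}/2$, transfer of $L^2(\mu)$ brackets for $\bar p_G^{1/2}$ into generalized brackets, then Theorem 5.11 with constants absorbed. The remaining slips are cosmetic: with $x=2g_G$ one has $x\geq-\log 2$ (not $-\tfrac12\log 2$) and the Bernstein norm needs $e^{|x|}-1-|x|$, but $e^{|x|}-1-|x|\lesssim(e^{x/2}-1)^2$ holds on any half-line bounded below, and in Step 2 it is cleaner to truncate lower brackets at $p_{G_0}/2$ and let the exponential in $\rho_K$ undo the logarithm than to invoke a Lipschitz bound.
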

We now apply this maximal inequality, together with the entropy bound established in Lemma \ref{lemma:dung_density_estimation_bracketing_integrl_bound}, to prove the upper bound in equation~\eqref{prop:model_convergence}. 

\begin{proof}[Proof of Proposition \ref{prop:MLE_estimation}]
\textbf{Step 1: Exponential tail bound.}
We first show that there exist universal constants \(c,C>0\) such that, for every \(\delta\geq\delta_n\), 
\begin{equation}
\label{eqn:tail_estimation}
\sup_{G_0\in\mathscr{P}_k(\Gamma)}\mathbb{P}_{G_0}\left(h(p_{\widehat{G}_n},p_{G_0}) > \delta\right)\leq c\exp\left(-\dfrac{n\delta^2}{c}\right). 
\end{equation}
Here, \(\delta_n\) is chosen so that
$\sqrt n\,\delta_n^2 \geq c_0\Psi(\delta_n)$
for a sufficiently large constant \(c_0\), where \(\Psi\) denotes the relevant local entropy integral. In view of Lemma 5, one may take $\delta_n=M\sqrt{{\log n}/{n}}$
with \(M>0\) sufficiently large. The basic likelihood inequality, together with \cite[Lemmas 4.1 and 4.2]{vandeGeer-00}, gives
$$ \frac1{16} h^2(p_{\widehat G_n},p_{G_0}) \leq h^2(\bar p_{\widehat G_n},p_{G_0}) \leq \frac1{\sqrt n}\mu_n(\widehat G_n).$$
Consequently,
\begin{align} 
\label{eqn:dung_proposition_2_bound_of_hellinger}
\mathbb P_{G_0} \left\{ h(p_{\widehat G_n},p_{G_0})>\delta \right\} \leq \mathbb P_{G_0} \left\{ \sup_{\substack{G\in\mathscr{P}_k(\Gamma)\\ h(\bar p_G,p_{G_0})>\delta/4}} \left[ \mu_n(G)-\sqrt n\,h^2(\bar p_G,p_{G_0}) \right] \geq0 \right\}. 
\end{align}
To control the right-hand side, define
$r_s:=\frac{2^s\delta}{4}$, and $R_s:=2r_s=\frac{2^{s+1}\delta}{4}$, and consider
$$ \mathcal A_s := \left\{ G\in\mathscr{P}_k(\Gamma),r_s<h(\bar p_G,p_{G_0})\leq R_s \right\}. $$
Let \(S\) be the smallest integer such that \(R_S\geq1\). Since the Hellinger distance between probability densities is bounded, the sets \(\mathcal A_0,\ldots,\mathcal A_S\) cover the region $\{G\in\mathscr{P}_k(\Gamma): h(\bar p_G,p_{G_0})>\delta/4\}$ appearing in equation~\eqref{eqn:dung_proposition_2_bound_of_hellinger}. Therefore,
\begin{equation}
\label{eqn:dung_proposition_2_another_bound_of_hellinger}
\begin{aligned}
\mathbb P_{G_0}
\left\{
  h(p_{\widehat G_n},p_{G_0})>\delta
\right\}
&\leq
\sum_{s=0}^S
\mathbb P_{G_0}
\left\{
  \sup_{G\in\mathcal A_s}
  |\mu_n(G)| \geq \sqrt n\,r_s^2
\right\}
\\
&\leq
\sum_{s=0}^S
\mathbb P_{G_0}
\left\{
  \sup_{\substack{
    G\in\mathscr{P}_k(\Gamma)\\
    h(\bar p_G,p_{G_0})\leq R_s
  }}
  |\mu_n(G)| \geq \sqrt n\,r_s^2
\right\}.
\end{aligned}
\end{equation}
Now we apply Lemma \ref{lemma:tail_of_empirical_process} with
$R=R_s$, $t=\sqrt n\,r_s^2 =\frac14\sqrt n\,R_s^2$, and  $C_1=15$. The upper restriction on \(t\) in equation~\eqref{eqn:cond1} follows immediately. Moreover, Lemma \ref{lemma:dung_density_estimation_bracketing_integrl_bound} and the choice of the multiplicative constant in \(\delta_n\) ensure that the entropy condition~\eqref{eqn:cond2} in Lemma \ref{lemma:tail_of_empirical_process} holds uniformly over \(s=0,\ldots,S\). Hence,
\begin{align*} \mathbb P_{G_0} \left\{ \sup_{\substack{G\in\mathscr{P}_k(\Gamma), h(\bar p_G,p_{G_0})\leq R_s}} |\mu_n(G)| \geq\sqrt n\,r_s^2 \right\} &\leq C\exp\left( -c\frac{nr_s^4}{R_s^2} \right)= C\exp(-cnr_s^2). 
\end{align*}
Substituting this bound into equation~\eqref{eqn:dung_proposition_2_another_bound_of_hellinger} yields
\begin{align*} \mathbb P_{G_0} \left\{ h(p_{\widehat G_n},p_{G_0})>\delta \right\} &\leq C\sum_{s=0}^S \exp(-cn4^s\delta^2) \leq C'\exp(-c'n\delta^2), \end{align*}
where the final inequality follows by summing the resulting geometric-type series. This proves exponential tail bound in equation~\eqref{eqn:tail_estimation}.

\textbf{Step 2: Expected estimation error.} 

Using the tail-integral representation and equation \eqref{eqn:tail_estimation}, we obtain
\begin{equation}
\label{eqn:dung_prop2_expectation_bound_of_hellinger}
\begin{aligned}
\mathbb E_{G_0}
\left[h(p_{\widehat G_n},p_{G_0})\right]
&=
\int_0^\infty
\mathbb P_{G_0}
\left\{h(p_{\widehat G_n},p_{G_0})>\delta\right\}
\,d\delta
\\
&\leq
\delta_n
+
C\int_{\delta_n}^\infty e^{-cn\delta^2}\,d\delta
\lesssim
\delta_n+\frac{e^{-cn\delta_n^2}}{\sqrt n}
\lesssim
\sqrt{\frac{\log n}{n}}.
\end{aligned}
\end{equation}
Finally, under the convention $h^2(f,g) = \frac12\int (\sqrt f-\sqrt g)^2\,dx$, the Cauchy–Schwarz inequality gives
\begin{align*}
\|f-g\|_1
&=
\int |\sqrt f-\sqrt g|(\sqrt f+\sqrt g)\,dx
\\
&\leq
\left(\int(\sqrt f-\sqrt g)^2\,dx\right)^{1/2}
\left(\int(\sqrt f+\sqrt g)^2\,dx\right)^{1/2}
\leq 2\sqrt2\,h(f,g).
\end{align*}
Thus, it follows from equation~\eqref{eqn:dung_prop2_expectation_bound_of_hellinger} that
$$ \sup_{G_0\in\mathscr{P}_k(\Gamma)} \mathbb E_{G_0} \left[ \|p_{\widehat G_n}-p_{G_0}\|_1 \right] \lesssim \sqrt{\frac{\log n}{n}}. $$
This completes the proof.

\end{proof}

\section{Auxiliary Results}
\label{sec:auxiliary_results}
\subsection{Separation Lemmas}
\label{sec:matrix_utils_separation_lemmma}
In this section, we present auxiliary separation results for multivariate
polynomials. These results are adapted from \cite[Lemma A.8]{wei2023minimum} and generalize the arguments in \cite[Appendix D]{heinrich2018} to the multivariate setting.

\begin{lemma}
\label{lemma:rank_matrix}
    Let $j$, $h$, $h_1,\ldots,h_j$ be positive integers satisfying $\sum_{i=1}^j h_i = h$. Consider $\theta_1,\ldots,\theta_j \in \mathbb{R}^{d}$ be pairwise distinct. Let
    \begin{equation*}
        \mathcal{I} = \{(i,\boldsymbol{\ell}) \in \mathbb{N}\times \mathbb{N}^d: \ 1\leq i\leq j,\ \boldsymbol{\ell} \succeq \boldsymbol{0}, |\boldsymbol{\ell}| < h_i\}. 
    \end{equation*}
    For each $(i,\boldsymbol{\ell}) \in \mathcal{I}$, we define the column vector of dimension $\binom{h+d-1}{d}$ indexed by  $\boldsymbol{p} \in \mathbb{N}^d$ with $|\boldsymbol{p}|< h$, by
    \begin{equation*}
        a_{i,\boldsymbol{\ell}}[\boldsymbol{p}] = \dfrac{\theta_i^{\boldsymbol{p}-\boldsymbol{\ell}}}{(\boldsymbol{p}-\boldsymbol{\ell})!}\boldsymbol{1}_{\boldsymbol{p} \succeq \boldsymbol{\ell}}, 
    \end{equation*}
    where inequalities between multi-indices are understood componentwise.

   Stack these vectors as columns to form the matrix 
    \begin{equation*}
        A:= A(\theta_1,\ldots,\theta_j) = [\underbrace{a_{1,\boldsymbol{\ell}}}_{{\binom{h+d-1}{d}} \text{ columns}} |\cdots|\underbrace{a_{j,\boldsymbol{\ell}}}_{{\binom{h+d-1}{d}} \text{ columns}}] \in \mathbb{R}^{\gamma_1 \times \gamma_2}
    \end{equation*}
    where 
\begin{equation*}
    \gamma_1 = {\binom{h+d-1}{d}}, \ \gamma_2 = \sum_{i=1}^j {\binom{h+d-1}{d}}.
\end{equation*}
Then, $\mathrm{rank}(A(\theta_1,\ldots,\theta_j)) = \gamma_2$. 
\end{lemma}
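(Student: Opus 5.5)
The plan is to read the matrix $A(\theta_1,\ldots,\theta_j)$ as the matrix of a family of linear functionals on polynomials, and to show that its columns are linearly independent. (The column and row counts in the display are garbled: there are $\sum_{i=1}^j\binom{h_i+d-1}{d}$ columns, indexed by $\mathcal{I}$, and $\binom{h+d-1}{d}$ rows, indexed by $|\boldsymbol{p}|<h$. The claim to prove is that $A$ has full column rank $|\mathcal{I}|$.)

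\textbf{Step 1: reformulate.} Let $\mathcal{P}_{<h}$ be the space of real polynomials on $\mathbb{R}^d$ of total degree $<h$, with basis $m_{\boldsymbol{p}}(x)=x^{\boldsymbol{p}}/\boldsymbol{p}!$ for $|\boldsymbol{p}|<h$. A direct computation gives
\[
D^{\boldsymbol{\ell}}m_{\boldsymbol{p}}(\theta_i)=\frac{\theta_i^{\boldsymbol{p}-\boldsymbol{\ell}}}{(\boldsymbol{p}-\boldsymbol{\ell})!}\mathbf{1}_{\boldsymbol{p}\succeq\boldsymbol{\ell}}=a_{i,\boldsymbol{\ell}}[\boldsymbol{p}] .
\]
So the column $a_{i,\boldsymbol{\ell}}$ is the coordinate vector of the functional $P\mapsto D^{\boldsymbol{\ell}}P(\theta_i)$ on $\mathcal{P}_{<h}$. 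Suppose $Ac=0$ for some $c=(c_{i,\boldsymbol{\ell}})_{(i,\boldsymbol{\ell})\in\mathcal{I}}$. Then the functional $L(P):=\sum_{(i,\boldsymbol{\ell})\in\mathcal{I}}c_{i,\boldsymbol{\ell}}D^{\boldsymbol{\ell}}P(\theta_i)$ vanishes on every basis element $m_{\boldsymbol{p}}$, hence on all of $\mathcal{P}_{<h}$. It therefore suffices to show that for each fixed $(i_0,\boldsymbol{\ell}_0)\in\mathcal{I}$ there is $P\in\mathcal{P}_{<h}$ whose derivatives satisfy $D^{\boldsymbol{\ell}}P(\theta_i)=\mathbf{1}_{\{(i,\boldsymbol{\ell})=(i_0,\boldsymbol{\ell}_0)\}}$ for all $(i,\boldsymbol{\ell})\in\mathcal{I}$. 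For that $P$ we get $L(P)=c_{i_0,\boldsymbol{\ell}_0}=0$. This is a multivariate Hermite interpolation statement.

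\textbf{Step 2: construct the interpolant.} Since the $\theta_i$ are pairwise distinct, choose a direction $u\in\mathbb{R}^d$ with the scalars $u^\top\theta_1,\ldots,u^\top\theta_j$ pairwise distinct; this holds outside finitely many hyperplanes. Define
\[
q(x)=\prod_{i\neq i_0}\bigl(u^\top(x-\theta_i)\bigr)^{h_i},
\]
a polynomial of degree $h-h_{i_0}$. At each $\theta_i$ with $i\neq i_0$, $q$ vanishes to order $h_i$, so every derivative of order $<h_i$ is zero there. The same holds for $q\,r$ for any polynomial $r$, by the Leibniz rule. Moreover $q(\theta_{i_0})\neq0$.

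Next, $1/q$ is analytic near $\theta_{i_0}$. Let $r$ be its Taylor polynomial at $\theta_{i_0}$ of degree $<h_{i_0}$, multiplied by $(x-\theta_{i_0})^{\boldsymbol{\ell}_0}/\boldsymbol{\ell}_0!$ and truncated to degree $<h_{i_0}$. Then $P:=q\,r$ agrees with $(x-\theta_{i_0})^{\boldsymbol{\ell}_0}/\boldsymbol{\ell}_0!$ up to order $h_{i_0}-1$ at $\theta_{i_0}$. Hence $D^{\boldsymbol{\ell}}P(\theta_{i_0})=\mathbf{1}_{\{\boldsymbol{\ell}=\boldsymbol{\ell}_0\}}$ for $|\boldsymbol{\ell}|<h_{i_0}$. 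The degree of $P$ is at most $(h-h_{i_0})+(h_{i_0}-1)<h$, so $P\in\mathcal{P}_{<h}$.

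\textbf{Step 3: conclude.} Steps 1 and 2 give $c=0$, so the columns are independent and $A$ has full column rank. This also confirms that $|\mathcal{I}|\le\binom{h+d-1}{d}$.

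The only delicate point is Step 2. There one must check that the power $h_i$ of a single linear form kills all mixed partial derivatives of order $<h_i$ at $\theta_i$, which follows by the chain rule. One must also check that the degree budget closes exactly. Both are routine once the genericity of $u$ is used to guarantee $q(\theta_{i_0})\neq0$.
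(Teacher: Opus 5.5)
Your argument is correct and is essentially the paper's. Both read $A\Lambda=0$ as the vanishing of $P\mapsto\sum_{(i,\boldsymbol{\ell})\in\mathcal{I}}\lambda_{i,\boldsymbol{\ell}}D^{\boldsymbol{\ell}}P(\theta_i)$ on polynomials of degree $<h$, and both test it against polynomials carrying a factor of degree $h-h_{i_0}$ that vanishes to order $h_{i'}$ at every other node. The only difference is in the local step: the paper uses coordinate factors $(x_{q_{i'}}-\theta_{i'}^{q_{i'}})^{h_{i'}}$ with the test family $(x-\theta_{i_0})^{\boldsymbol{p}}Q(x)$, $|\boldsymbol{p}|<h_{i_0}$, and eliminates triangularly downward in $|\boldsymbol{p}|$, whereas you build the exact Hermite dual polynomial by inverting $q$ to order $h_{i_0}$ and so avoid the elimination. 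You are also right that the displayed dimensions are a typo: block $i$ has $\binom{h_i+d-1}{d}$ columns, so $\gamma_2=\sum_{i=1}^{j}\binom{h_i+d-1}{d}\le\gamma_1$, which is what the paper's proof (indexing columns by $\mathcal{I}$) actually establishes.
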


\begin{proof}
    Consider a $\gamma_2$-dimension vector $\Lambda = (\lambda_{i,\boldsymbol{\ell}})_{(i,\boldsymbol{\ell})\in \mathcal{I}}$ satisfying $A\Lambda = 0$, we need to prove that $\Lambda = 0$. We calculate 
    \begin{equation*}
        (A\Lambda)_{\boldsymbol{p}} = \sum_{(i,\boldsymbol{\ell}) \in \mathcal{I}} \lambda_{i,\boldsymbol{\ell}} a_{i,\boldsymbol{\ell}}[\boldsymbol{p}] = \sum_{(i,\boldsymbol{\ell}) \in \mathcal{I}} \lambda_{i,\boldsymbol{\ell}}\dfrac{\theta_i^{\boldsymbol{p}-\boldsymbol{\ell}}}{(\boldsymbol{p}-\boldsymbol{\ell})!}\boldsymbol{1}_{\boldsymbol{\ell} \preceq\boldsymbol{p}} = 0.
    \end{equation*}
    Thus, for a $d$-variate polynomial $P(x) = \sum^{|\boldsymbol{p}|<h}_{\boldsymbol{0}\preceq\boldsymbol{p}}c_{\boldsymbol{p}}x^{\boldsymbol{p}}/\boldsymbol{p}!$, writing $c = (c_{\boldsymbol{p}})_{\boldsymbol{0}\preceq\boldsymbol{p},|\boldsymbol{p}|<h}$, we have 
    \begin{align*}
    c \cdot A\Lambda &= \sum_{\boldsymbol{0}\preceq \boldsymbol{p},|\boldsymbol{p}|\leq h} c_{\boldsymbol{p}}\sum_{(i,\boldsymbol{\ell}) \in \mathcal{I}} \lambda_{i,\boldsymbol{\ell}}\dfrac{\theta_i^{\boldsymbol{p}-\boldsymbol{\ell}}}{(\boldsymbol{p}-\boldsymbol{\ell})!}\boldsymbol{1}_{\boldsymbol{\ell} \preceq\boldsymbol{p}} = \sum_{(i,\boldsymbol{\ell}) \in \mathcal{I}} \lambda_{i,\boldsymbol{\ell}} \sum_{\boldsymbol{0}\preceq \boldsymbol{p},|\boldsymbol{p}|\leq h} c_{\boldsymbol{p}}\dfrac{\theta_i^{\boldsymbol{p}-\boldsymbol{\ell}}}{(\boldsymbol{p}-\boldsymbol{\ell})!}\boldsymbol{1}_{\boldsymbol{\ell} \preceq\boldsymbol{p}}\\
    &= \sum_{(i,\boldsymbol{\ell}) \in \mathcal{I}} \lambda_{i,\boldsymbol{\ell}} P^{(\boldsymbol{\ell})}(\theta_i). 
    \end{align*}
    The remaining part is to prove that each coefficient $\lambda_{i,\boldsymbol{\ell}}$ is equal to 0. This can be done by  finding suitable polynomial $P$. 
 Consider any $i \in [1,j]$, noting that $\theta_1,\ldots,\theta_j$ are different, for each $i'\neq i$, there exists a coordinate $q_{i'}$ such that $\theta^{q_{i'}}_i \neq \theta^{q_{i'}}_{i'}$. Choosing the polynomial 
    \begin{equation*}
        P(x) = (x-\theta_i)^{\boldsymbol{p}}\prod_{\substack{i'=1\\i'\neq i}}(x_{q_i'}-\theta^{q_{i'}}_{i'})^{h_{i'}}
    \end{equation*}
    for $\boldsymbol{p}\succeq\boldsymbol{0}$ and $|\boldsymbol{p}| < h_i$. Then, it is obvious to verify that
    \begin{equation*}
        P^{(\boldsymbol{\ell})}(\theta_{i'}) = 0,\ \forall \boldsymbol{\ell}\succeq \boldsymbol{0}, \ |\boldsymbol{\ell}| < h_{i'}.
    \end{equation*}
    In addition, by successively substituting $\boldsymbol{p}$ from $|\boldsymbol{p}| = h_i-1$ down to $|\boldsymbol{p}|=0$ and using Lemma \ref{lemma:derivative_lemma}, we get $\lambda_{i,\boldsymbol{\ell}} = 0$. This completes our proof. 
\end{proof}

\begin{lemma}
\label{lemma:derivative_lemma}
Let $P(x) = (x-a)^{\boldsymbol{p}}Q(x)$, where $P$ and $Q$ are two $d$-variable polynomials. For $\boldsymbol{\ell}$, suppose that there exists a coordinate $i$ such that $\ell_i < p_i$, then $P^{(\boldsymbol{\ell})}(a) = 0$. 
\end{lemma}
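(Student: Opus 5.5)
The statement to prove is Lemma~\ref{lemma:derivative_lemma}: if $P(x)=(x-a)^{\boldsymbol{p}}Q(x)$ and $\ell_i<p_i$ for some coordinate $i$, then $P^{(\boldsymbol{\ell})}(a)=0$. I will expand the derivative with the multivariate Leibniz rule and show that every term vanishes at $x=a$.

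The Leibniz rule gives
\begin{equation*}
P^{(\boldsymbol{\ell})}(x)=\sum_{\boldsymbol{0}\preceq\boldsymbol{m}\preceq\boldsymbol{\ell}}\binom{\boldsymbol{\ell}}{\boldsymbol{m}}\,D^{\boldsymbol{m}}\bigl[(x-a)^{\boldsymbol{p}}\bigr]\,D^{\boldsymbol{\ell}-\boldsymbol{m}}Q(x).
\end{equation*}
It therefore suffices to prove that $D^{\boldsymbol{m}}[(x-a)^{\boldsymbol{p}}]$ vanishes at $x=a$ for every $\boldsymbol{m}\preceq\boldsymbol{\ell}$.

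The monomial factorizes as $\prod_{s=1}^d (x_s-a_s)^{p_s}$, and each factor depends on a single coordinate. Hence the derivative factorizes as well:
\begin{equation*}
D^{\boldsymbol{m}}\bigl[(x-a)^{\boldsymbol{p}}\bigr]=\prod_{s=1}^d \partial_{x_s}^{m_s}(x_s-a_s)^{p_s}.
\end{equation*}
Now take the coordinate $i$ with $\ell_i<p_i$. Since $m_i\le\ell_i<p_i$, we get
\begin{equation*}
\partial_{x_i}^{m_i}(x_i-a_i)^{p_i}=\frac{p_i!}{(p_i-m_i)!}\,(x_i-a_i)^{p_i-m_i},
\end{equation*}
and the exponent $p_i-m_i$ is at least $1$. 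This factor is therefore zero at $x_i=a_i$. So every Leibniz summand vanishes at $x=a$, and $P^{(\boldsymbol{\ell})}(a)=0$.

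There is no real obstacle here. The only point needing care is that the bound $m_i\le\ell_i$ is coordinatewise, so it holds for every summand in the Leibniz sum; this is exactly why the hypothesis only needs to hold in a single coordinate $i$.
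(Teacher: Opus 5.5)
Your proof is correct and follows essentially the same idea as the paper, which factors out $(x_i-a_i)^{p_i}$ from $P$ and notes that differentiating only $\ell_i<p_i$ times in $x_i$ leaves a positive power of $(x_i-a_i)$, so the result vanishes at $a$. Your multivariate Leibniz expansion makes this same observation explicit term by term, and is more carefully justified than the paper's one-line sketch.
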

\begin{proof}
    From the formulation of $P$, we have $P(x) = (x_i-a_i)^{p_i}\tilde{Q}(x)$. By applying the derivative to the variable $x_i$ for $\ell_i$ times, we get the zero value. From this, we get  $P^{(\boldsymbol{\ell})}(a) = 0$. 
\end{proof}

\begin{corollary}
\label{coro:epsilon_separation}
    Consider $A(\theta_1,\ldots,\theta_j)$ as defined in Lemma \ref{lemma:rank_matrix}. For $\epsilon > 0$, we define the set of $\epsilon$-separated vectors in $\Theta^j$ by 
    \begin{equation*}
        \Theta^{j}_\epsilon = \{(\theta_i)_{1\leq i\leq j}:\forall i\neq i',\ \|\theta_i-
        \theta_j\|\geq \epsilon\}. 
    \end{equation*}
    Then, for any vector $\Lambda \in \mathbb{R}^{\gamma_2}$ and any vector $(\theta_i)_{1\leq i\leq j} \in \Theta^{j}_{\epsilon}$, 
    \begin{equation*}
        \|A(\theta_1,\ldots,\theta_j)\Lambda\| \asymp_{\varepsilon,d} \|\Lambda\|.
    \end{equation*}
\end{corollary}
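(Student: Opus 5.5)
The plan is to derive the two-sided bound from Lemma~\ref{lemma:rank_matrix} by a compactness argument, using homogeneity in $\Lambda$ and continuity in the $\theta$'s.

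\textbf{Upper bound.} Each entry $a_{i,\boldsymbol{\ell}}[\boldsymbol{p}]=\theta_i^{\boldsymbol{p}-\boldsymbol{\ell}}/(\boldsymbol{p}-\boldsymbol{\ell})!$ is a monomial in the coordinates of $\theta_i$. Hence on a bounded set of $\theta$'s the operator norm of $A(\theta_1,\ldots,\theta_j)$ is uniformly bounded. This gives $\|A\Lambda\|\lesssim\|\Lambda\|$.

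\textbf{Lower bound.} By homogeneity it suffices to show that
\[
m:=\inf\Big\{\|A(\theta_1,\ldots,\theta_j)\Lambda\| : \|\Lambda\|=1,\ (\theta_i)_i\in\Theta^j_\epsilon\Big\}>0.
\]
I would work on $\Theta^j_\epsilon$ with $\Theta$ compact. This is the setting of the application in Proposition~\ref{lemma:multi_dimension_taylor_expansion}, where $\phi_K=(\oupgamma_K-\oupgamma_J)/\varepsilon_J$ is bounded. The set $\Theta^j_\epsilon$ is closed, as an intersection of closed conditions $\|\theta_i-\theta_{i'}\|\ge\epsilon$, and bounded, so it is compact. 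The unit sphere in $\mathbb{R}^{\gamma_2}$ is also compact. The map $(\theta,\Lambda)\mapsto\|A(\theta)\Lambda\|$ is continuous, so the infimum $m$ is attained at some $(\theta^\star,\Lambda^\star)$.

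The key point is that $\theta^\star\in\Theta^j_\epsilon$ has pairwise distinct entries, because $\epsilon>0$. Lemma~\ref{lemma:rank_matrix} then says $A(\theta^\star)$ has full column rank $\gamma_2$, so $A(\theta^\star)\Lambda^\star\neq0$ and therefore $m>0$. Together with the upper bound this gives $\|A\Lambda\|\asymp\|\Lambda\|$, with constants depending on $\epsilon$, $d$, $\Theta$ and the $h_i$.

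\textbf{Main obstacle.} The difficulty is making sure compactness is legitimate. If $\Theta$ were unbounded, the constants could degenerate: the upper bound blows up, and the lower bound can also fail after rescaling. I would therefore state explicitly that $\Theta$ is bounded, which covers the use in the coarse-graining expansion.

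A secondary point is that the dimension bookkeeping in Lemma~\ref{lemma:rank_matrix} should read $\gamma_2=\sum_i\binom{h_i+d-1}{d}\le\gamma_1$. The full-rank statement is what is needed, and the compactness argument only uses injectivity of $A(\theta^\star)$.
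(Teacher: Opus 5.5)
Your proposal is correct and follows essentially the same route as the paper. Both arguments minimize the continuous map $(\theta,\Lambda)\mapsto\|A(\theta)\Lambda\|$ over the compact set $\Theta^j_\epsilon\times\mathbb{S}^{\gamma_2-1}$, use Lemma~\ref{lemma:rank_matrix} to show the attained minimum is positive, bound the maximum by continuity and boundedness, and conclude by homogeneity in $\Lambda$. Your explicit requirement that $\Theta$ be bounded (implicit in the paper's ``defined in a compact set'') and your correction $\gamma_2=\sum_i\binom{h_i+d-1}{d}$ are both appropriate refinements.
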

\begin{proof}
    Consider continuous function $\Phi: \Theta^{j}_{\epsilon} \times \mathbb{S}^{\gamma_2-1} \to \mathbb{R}$,   $((\theta_1,\ldots,\theta_j),\Lambda) \mapsto  \|A(\theta_1,\ldots,\theta_j)\Lambda\|$ defined in a compact set. From its continuity, $\Phi$ attains the minimum and maximum value in this set at $((\theta_{1*},\ldots,\theta_{j*}),\Lambda_*)$ and $((\theta_{1}^*,\ldots,\theta_{j}^*),\Lambda^*)$ respectively. From Lemma \ref{lemma:rank_matrix}, we see that the $\Phi((\theta_{1*},\ldots,\theta_{j*}),\Lambda_*) > 0$. As a result, we see that $\Phi((\theta_{1},\ldots,\theta_{j}),\Lambda) \asymp 1 = \|\Lambda\|$ for $\Lambda \in \mathbb{S}^{\gamma_2-1}$. Thus, we have 
    \begin{equation*}
        \Phi((\theta_{1},\ldots,\theta_{j}),\Lambda) \asymp \|\Lambda\|, \ \forall \Lambda \in \mathbb{R}^{\gamma_2}. 
    \end{equation*}
\end{proof}

\begin{lemma}(Adapted from \cite[Lemma 11]{nguyen2026geometry})
\label{lemma:dung_separation}
Suppose that $G_0 = \sum_{\pi_i}^{k_0}\pi_{i0}\delta_{\gamma_{i0}}$ and $G = \sum_{\pi_i}^{k}\pi_{i}\delta_{\gamma_{i}}$ be two discrete measures ($k\geq k_0$), such that 
\begin{enumerate}
    \item (Non-vanishing mass) $\pi_{\min} = \min_{1\leq i\leq k_0}\pi_{0i} > 0$. 
    \item (Separation condition) For each $1\leq i\neq j \leq k_0$, we have $\|\gamma_{0i} - \gamma_{0j}\| \geq 2\delta$. 
\end{enumerate}
Then, if $W_1(G,G_0) < \pi_{\min}\delta/2$, for each $j \in [k_0]$, there exists $c(j) \in [k]$ such that $\|\gamma_{0j} - \gamma_{j}\| < \delta/2$. Moreover, the map $j \mapsto c(j)$ is injective. 
\end{lemma}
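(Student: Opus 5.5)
The statement to prove is Lemma~\ref{lemma:dung_separation}: if $W_1(G,G_0)<\pi_{\min}\delta/2$, then every atom of $G_0$ has an atom of $G$ within distance $\delta/2$, and this assignment is injective. I would use a direct mass-transport argument rather than contradiction. Fix an optimal coupling $(q_{ab})$ between $G_0$ and $G$, so that $\sum_{a,b} q_{ab}\|\gamma_{0a}-\gamma_b\|=W_1(G,G_0)$. For each $j\in[k_0]$ this coupling sends all of the mass $\pi_{0j}\ge\pi_{\min}$ from $\gamma_{0j}$ to atoms of $G$.

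The first step shows that some atom of $G$ lies near $\gamma_{0j}$. Suppose instead that every atom $\gamma_b$ of $G$ satisfies $\|\gamma_{0j}-\gamma_b\|\ge\delta/2$. Then the row $j$ alone contributes
\[
W_1(G,G_0)\ \ge\ \sum_b q_{jb}\|\gamma_{0j}-\gamma_b\|\ \ge\ \pi_{0j}\,\delta/2\ \ge\ \pi_{\min}\delta/2 .
\]
This contradicts the hypothesis. Hence there is an index $c(j)\in[k]$ with $\|\gamma_{0j}-\gamma_{c(j)}\|<\delta/2$. The statement's ``$\gamma_j$'' should be read as $\gamma_{c(j)}$, and I would restate it that way. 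If several indices qualify, I would take $c(j)$ to be the one nearest to $\gamma_{0j}$, ties broken by smallest index, so that the map is well defined.

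The second step proves injectivity using the separation condition. Suppose $c(j)=c(j')=b$ with $j\neq j'$. The triangle inequality then gives
\[
\|\gamma_{0j}-\gamma_{0j'}\|\ \le\ \|\gamma_{0j}-\gamma_b\|+\|\gamma_b-\gamma_{0j'}\|\ <\ \delta .
\]
This contradicts $\|\gamma_{0j}-\gamma_{0j'}\|\ge 2\delta$. In fact the balls $B(\gamma_{0j},\delta/2)$ are pairwise disjoint, so no atom of $G$ can lie in two of them. Therefore $j\mapsto c(j)$ is injective, and in particular $k\ge k_0$ holds automatically.

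I do not expect a genuine obstacle. The one point needing care is to lower-bound $W_1$ by a single row of the coupling, which uses nonnegativity of the cost and the marginal constraint $\sum_b q_{jb}=\pi_{0j}$. I would also state the strict-versus-non-strict inequality conventions explicitly. When this lemma is later applied in Lemma~\ref{lemma:partial_OT_dominate_wasserstein}, the hypothesis $\delta<\frac12\min_{i\ne j}\|\gamma_{0i}-\gamma_{0j}\|$ of Theorem~\ref{thm:lower_bound} supplies exactly the separation condition used here.
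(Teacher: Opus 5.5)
Your proof is correct and follows essentially the same route as the paper. The transport cost out of the single atom $\gamma_{0j}$ is at least $\pi_{0j}\delta/2$, which forces an atom of $G$ within $\delta/2$ of $\gamma_{0j}$, and the triangle inequality with the $2\delta$-separation then gives injectivity. The only cosmetic difference is that you fix an optimal coupling (which exists because both measures are finitely supported), whereas the paper bounds an arbitrary coupling and then takes the infimum.
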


\begin{proof}
Let $\mathbf Q$ be an arbitrary coupling of $G_0$ and $G$. Fix
$j\in[k_0]$ and suppose, by contradiction, that
$\|\gamma_i-\gamma_{0j}\|\geq \frac{\delta}{2}$   for every $i\in[k]$. Since the first marginal of $\mathbf Q$ is $G_0$, the total mass
transported from $\gamma_{0j}$ is $\pi_{0j}$. Separation condition implies that every portion of this mass must be transported over a distance of at least $\delta/2$.
Consequently,
\begin{align*}
    \int_{\Gamma\times\Gamma}
    \|\gamma-\gamma'\|\,d\mathbf Q(\gamma,\gamma')
    \geq
    \int_{\{\gamma_{0j}\}\times\Theta}
    \|\gamma-\gamma'\|\,d\mathbf Q(\gamma,\gamma') 
    \geq
    \frac{\delta}{2}\,
    \mathbf Q\bigl(\{\gamma_{0j}\}\times\Theta\bigr) =
    \frac{\pi_{0j}\delta}{2}
    \geq
    \frac{\pi_{\min}\delta}{2}.
\end{align*}
Because this inequality holds for every coupling $\mathbf Q$ of
$G_0$ and $G$, taking the infimum over all such couplings gives $$W_1(G,G_0)\geq\frac{\pi_{\min}\delta}{2},$$
contradicting the assumption
$W_1(G,G_0)<\pi_{\min}\delta/2$. Therefore, for every $j\in[k_0]$, there exists an index
$c(j)\in[k]$ such that $\|\gamma_{c(j)}-\gamma_{0j}\|<\frac{\delta}{2}$. In addition, suppose that there exist two distinct indices $j$ and $j'$ such that $\ell = c(j) = c(j')$, then $\|\gamma_{\ell} - \gamma_{0j}\| < \frac{\delta}{2}$ and $\|\gamma_{\ell} - \gamma_{0j'}\| < \frac{\delta}{2}$, thus by triangle inequality, 
\begin{equation*}
    \|\gamma_{0j} - \gamma_{0j'}\| \leq \|\gamma_{\ell} - \gamma_{0j}\| + \|\gamma_{\ell} - \gamma_{0j'}\| \leq \delta,
\end{equation*}
which is a contradiction to the fact that $\min_{i\neq j }\|\gamma_{0i} - \gamma_{0j}\| \geq 2\delta$. Thus the map $j \mapsto c(j)$ is injective. 
\end{proof}

\subsection{Proof of Proposition \ref{dung:prop_strong_identifiability}}
\label{dung:proof_prop_strong_identifiability}
Before delving into detail of the proof of Proposition \ref{dung:prop_strong_identifiability}, we state the following Lemma. Indeed, this result is a generalization for the main component of the demonstration of \cite[Lemma 3]{Chen1995}. While the idea is straightforward, we present here for completeness

\begin{lemma}
\label{dung:lemma_independent_polynomial_exponential}
    Let $\theta_1 > \ldots > \theta_m$ be distinct reals. Suppose that there exist complex numbers $c_{u,v}$ ($0\leq u \leq p$, $1\leq v\leq m$) such that 
    \begin{equation}
        \label{dung:lemma_auxilliary_independent}
        \sum_{v=1}^m\left[c_{0,v} + \cdots c_{p,v}(it)^{p}\right]\exp(it\theta_v) = 0
    \end{equation}
    for all $t \in \mathbb{R}$. Then, it is necessary that all $c_{u,v} = 0$ for $0\leq u \leq p$ and $1\leq v\leq m$. 
\end{lemma}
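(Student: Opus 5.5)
The plan is to show that $\sum_{v}P_v(it)e^{it\theta_v}\equiv 0$, with $P_v(s)=\sum_{u=0}^p c_{u,v}s^u$, forces every polynomial $P_v$ to vanish. I would argue by induction on $m$, eliminating one exponential at a time with a differential operator that annihilates it.

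For $m=1$, the identity becomes $P_1(it)e^{it\theta_1}=0$ for all $t\in\mathbb{R}$. Since the exponential never vanishes, the polynomial $P_1(it)$ in $t$ vanishes identically, so all $c_{u,1}=0$.

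For the inductive step, set $g(t)=\sum_{v=1}^m P_v(it)e^{it\theta_v}$ and apply the operator $L=(\frac{d}{dt}-i\theta_m)^{p+1}$. For a single term we have
\[
\Bigl(\frac{d}{dt}-i\theta\Bigr)\bigl[Q(t)e^{it\theta_v}\bigr]=\bigl[Q'(t)+i(\theta_v-\theta)Q(t)\bigr]e^{it\theta_v}.
\]
Taking $\theta=\theta_m$ and $v=m$, each application lowers the degree of the polynomial factor by one. Since $\deg P_m(it)\le p$, the operator $L$ kills the $m$-th term.

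For $v<m$, the map $Q\mapsto Q'+i(\theta_v-\theta_m)Q$ preserves degree and keeps the leading coefficient up to the nonzero factor $i(\theta_v-\theta_m)$. On polynomials of degree at most $p$ it is upper triangular with nonzero diagonal entries, hence invertible. Therefore $L g=\sum_{v<m}\widetilde P_v(it)e^{it\theta_v}\equiv 0$ with $\widetilde P_v=T_v^{p+1}P_v$, where $T_v$ is invertible. The induction hypothesis gives $\widetilde P_v=0$, and invertibility of $T_v$ then gives $P_v=0$ for all $v<m$. Returning to the original identity, only the $m$-th term remains, and the base case gives $P_m=0$.

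Because $c_{u,v}$ are the coefficients of $P_v$ in powers of $(it)$, all $c_{u,v}=0$. The only delicate point is this invertibility of $Q\mapsto Q'+\lambda Q$ for $\lambda\neq0$: it follows because the map is $\lambda I+N$ with $N$ nilpotent, and $\lambda\ne0$ here precisely because the $\theta_v$ are distinct.
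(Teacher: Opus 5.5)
Your proof is correct, but it takes a different route from the paper's.

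\textbf{The paper's route.} The paper multiplies the identity by $e^{-t^2/2}$ and applies an inverse Fourier transform. This turns each term into a Hermite polynomial times a Gaussian centred at $\theta_v$. It then multiplies by $\exp((t-\theta_1)^2/2)$ and lets $t\to\infty$. Because $\theta_1>\theta_v$ for $v>1$, every term except the first vanishes, which forces $c_{u,1}=0$. The remaining frequencies are peeled off one at a time in the same way.

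\textbf{Your route.} You instead annihilate the last exponential with $(\frac{d}{dt}-i\theta_m)^{p+1}$. You then use that $Q\mapsto Q'+\lambda Q$ is invertible on polynomials of degree at most $p$ when $\lambda\neq 0$.

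\textbf{What each approach buys.} Your argument is purely algebraic and needs only distinctness of the $\theta_v$, not their ordering; it would work equally for distinct complex exponents. It avoids Fourier inversion and facts about Hermite polynomials. It also only differentiates and uses that a polynomial vanishing on an interval is zero. So it works verbatim when the identity holds merely on an open interval. That is exactly the situation in the proof of Proposition~\ref{dung:prop_strong_identifiability}, where the identity is first obtained only for $t$ near the origin. The paper bridges that gap with an analyticity remark. The paper's approach follows Chen's original argument and fits the Fourier-analytic setting of that proof. However, it genuinely requires the identity on all of $\mathbb{R}$, and it uses the ordering to single out a dominant term.

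\textbf{A cosmetic point.} Since $Q_v(t)=P_v(it)$, the operator induced on $P_v$ is $P\mapsto iP'+\lambda_v P$, not $T_v$ itself. This operator is equally invertible, so nothing in your argument changes.
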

\begin{proof}[Proof of Lemma \ref{dung:lemma_independent_polynomial_exponential}]
Multiplying both sides of equation~\eqref{dung:lemma_auxilliary_independent} with $e^{-t^2/2}$ and applying inverse Fourier transformation, we get 
\begin{equation}
\label{eqn:dung_lemma_appendix_fourier_transform}
    \sum_{v=1}^m\left[\sum_{u = 0}^p (-1)^{u}c_{u,v}H_u(t-\theta_v)\right]\exp\left(-\frac{(t-\theta_v)^2}{2}\right) = 0,
\end{equation}
where $H_u$ denotes the $u$-th order Hermite polynomial. Multiplying both sides of equation~\eqref{eqn:dung_lemma_appendix_fourier_transform} with $\exp\left(\frac{(t-\theta_1)^2}{2}\right)$ and letting $t \to \infty$, noting that for any polynomial $P$ and index $v>1$, 
\begin{equation*}
    P(t)\cdot\exp\left(-\frac{(t-\theta_v)^2-(t-\theta_1)^2}{2}\right) \to 0 \text{ for } t \to \infty, 
\end{equation*}
we achieve that 
\begin{equation*}
    \sum_{u = 0}^p (-1)^{u}c_{u,1}H_u(t-\theta_1) \to 0
\end{equation*}
for $t \to \infty$. This only happens when $c_{u,1} = 0$ for all $0\leq u \leq p$. Using a similar argument, we obtain that $c_{u,v} = 0$ for $0\leq u \leq p$ and $1\leq v\leq m$. 
\end{proof}
\begin{proof}[Return to Proof of Proposition \ref{dung:prop_strong_identifiability}]
\textbf{Step 1: } First, we prove the sufficient condition for identifiability of location family of distribution.

Suppose that there exist constants $c_{j,\boldsymbol{\alpha}}$ such that
\begin{equation}
\label{eqn:dung_condition_of_prop_3}
    \sum_{j=1}^m
    \sum_{|\boldsymbol{\alpha}|\leq p}
    c_{j,\boldsymbol{\alpha}}
    D_\gamma^{\boldsymbol{\alpha}} F(x\mid\gamma_j)
    =0,
\end{equation}
Taking derivative $\frac{\partial^d}{\partial x_1\ldots \partial x_d}$ on both sides of equation~\eqref{eqn:dung_condition_of_prop_3}, we have 
\begin{equation}
\label{eqn:dung_proof_of_prop3_identifiability_condition}
    \sum_{j=1}^m
    \sum_{|\boldsymbol{\alpha}|\leq p}
    c_{j,\boldsymbol{\alpha}}
    D_\theta^{\boldsymbol{\alpha}} f(x\mid\theta_j)
    =0
\end{equation}
Let $\varphi(t)
    :=
    \int_{\mathbb{R}^d}
    \exp(\mathrm{i}t^\top x)f(x)\,dx$, $t\in\mathbb{R}^d$
be the characteristic function associated with $f$. Then, the characteristic
function corresponding to the translated density $f(\cdot-\theta)$ is $\exp(\mathrm{i}t^\top\theta)\varphi(t)$.
Furthermore, for every multi-index $\boldsymbol{\alpha}\in\mathbb{N}^d$,
\[
    D_\theta^{\boldsymbol{\alpha}}
    \left[
        \exp({i}t^\top\theta)\varphi(t)
    \right]
    =
    ({i}t)^{\boldsymbol{\alpha}}
    \exp({i}t^\top\theta)\varphi(t),
\]
where $({i}t)^{\boldsymbol{\alpha}} :=
    \prod_{\ell=1}^d
    ({i}t_\ell)^{\alpha_\ell}$. Taking the Fourier transform of equation~\eqref{eqn:dung_proof_of_prop3_identifiability_condition}, we have 
\[
    \varphi(t)
    \sum_{j=1}^m
    \sum_{|\boldsymbol{\alpha}|\leq p}
    c_{j,\boldsymbol{\alpha}}
    ({i}t)^{\boldsymbol{\alpha}}
    \exp({i}t^\top\theta_j)
    =0.
\]
Since $\varphi(0)=1$ and $\varphi$ is continuous, there exists an open
neighborhood $\mathcal{N}$ of the origin such that
$\varphi(t)\neq 0$ for every $t\in\mathcal{N}$. Therefore,
\begin{equation}
\label{eqn:dung_prop_identifiabilty_another_form}
    \sum_{j=1}^m
    P_j(t)\exp({i}t^\top\theta_j)
    =0,
    \qquad t\in\mathcal{N}
\end{equation}
where $P_j(t) := \sum_{|\boldsymbol{\alpha}|\leq p} c_{j,\boldsymbol{\alpha}}({i}t)^{\boldsymbol{\alpha}}$
is a polynomial of degree at most $p$. As polynomial function is also analytic, this polynomial vanishes in all $\mathbb{R}^d$. Choose $v\in\mathbb{R}^d$ such that $v^\top\theta_1,\ldots,v^\top\theta_m$
are pairwise distinct. The set of such vectors is open and dense in
$\mathbb{R}^d$, since its complement is contained in the finite union
of hyperplanes $\bigcup_{j\neq k}
    \left\{
        v\in\mathbb{R}^d:
        v^\top(\theta_j-\theta_k)=0
    \right\}$. Setting $t=sv$ for $s$ in a sufficiently small neighborhood of zero, we obtain
\[
    \sum_{j=1}^m
    P_j(sv)
    \exp\bigl({i}s v^\top\theta_j\bigr)
    =0.
\]
For each $j$, write $P_j(sv) = \sum_{r=0}^p s^r Q_{j,r}(v)$,
where $Q_{j,r}(v) := {i}^r \sum_{|\boldsymbol{\alpha}|=r} c_{j,\boldsymbol{\alpha}}v^{\boldsymbol{\alpha}}$.
It follows that
\[
    \sum_{j=1}^m
    \sum_{r=0}^p
    Q_{j,r}(v)s^r
    \exp\bigl({i}s v^\top\theta_j\bigr)
    =0.
\]
Since $v^\top\theta_1,\ldots,v^\top\theta_m$ are pairwise distinct,
the functions $\left\{
        s^r\exp\bigl({i}s v^\top\theta_j\bigr):
        j\in[m],\ 0\leq r\leq p
    \right\}$
are linearly independent by Lemma \ref{dung:lemma_independent_polynomial_exponential}. Consequently, $Q_{j,r}(v)=0$ 
for every $j\in[m]$ and every $0\leq r\leq p$. The preceding argument applies to every $v$ in an open dense subset of
$\mathbb{R}^d$, and since each $Q_{j,r}$ is a polynomial, it follows that $Q_{j,r}(v)=0$
for every $v\in\mathbb{R}^d$. Hence,
\[
    \sum_{|\boldsymbol{\alpha}|=r}
    c_{j,\boldsymbol{\alpha}}v^{\boldsymbol{\alpha}}=0
    \qquad
    \text{for every }v\in\mathbb{R}^d.
\] 
Since a polynomial that vanishes identically has all of its
coefficients equal to zero, we conclude that $c_{j,\boldsymbol{\alpha}}=0$ for every $j\in[m]$ and every $\boldsymbol{\alpha}\in\mathbb{N}^d$ satisfying
$|\boldsymbol{\alpha}|\leq p$. 

\textbf{Step 2: } For uniformly modulus condition, from the assumption about location-scale distribution, $ D_{\theta}^{\boldsymbol{\alpha}} F(x\mid\theta) = (-1)^{|\boldsymbol{\alpha}|}D_{x}^{\boldsymbol{\alpha}} F(x-\theta\mid 0)$, and thus for each multi-index $\boldsymbol{\alpha}$, 
\[
    \sup_{x\in\mathbb{R}^d}
    \left|D^{\boldsymbol{\alpha}} F(x\mid\theta)\right|
    =
    \sup_{x\in\mathbb{R}^d}
    \left|D^{\boldsymbol{\alpha}} F(x\mid 0)\right|.
\]
Therefore, the supremum does not depend on $\theta$. Now, for any multi-index $\boldsymbol{\alpha}$ such that $|\boldsymbol{\alpha}| = p+1$, let $S(\boldsymbol{\alpha}):=\{i\in [d]:\alpha_i > 0\}$ be the position such that $\alpha_i >0$, and let $\boldsymbol{\beta}:= \boldsymbol{\alpha} - \boldsymbol{1}_{S(\boldsymbol{\alpha})}$. 
Differentiating the multivariate CDF under the integral sign gives
\begin{equation*}
    D^{\boldsymbol{\alpha}} F(x\mid 0)
    =
    \int_{(-\infty,x_{S(\boldsymbol{\alpha})^c}]}
    D^{\boldsymbol{\beta}} f\bigl(x_{S(\boldsymbol{\alpha})},y_{S(\boldsymbol{\alpha})^c}\bigr)
    \,\mathrm{d}y_{S(\boldsymbol{\alpha})^c}.
\end{equation*}
Therefore, $\sup_{x \in \mathbb{R}^d}|D^{\boldsymbol{\alpha}}F(x\mid \gamma)| < \infty$ for each $|\boldsymbol{\alpha}| = p+1$ follows if equation~\eqref{eqn:dung_equivalent_form_for_continuous_modulus} holds. This completes the proof.
\end{proof}

\bibliographystyle{abbrv}
\bibliography{references}

\end{document}